\definecolor{darkgreen}{rgb}{0.0, 0.5, 0.0}
\newtheorem{theorem}{Theorem}[section]
\newtheorem{lemma}[theorem]{Lemma}
\newtheorem*{theorem*}{Theorem}
\newtheorem*{THA}{Theorem A}
\newtheorem*{THB}{Theorem B}
\newtheorem*{THC}{Theorem C}
\newtheorem{corollary}[theorem]{Corollary}
\newtheorem{proposition}[theorem]{Proposition}
\newtheorem{conjecture}[theorem]{Conjecture}
\newtheorem{question}[theorem]{Question}
\newtheorem{example}[theorem]{Example}
\newtheorem{framework}[theorem]{Framework}
\theoremstyle{definition}
\newtheorem{definition}[theorem]{Definition}
\theoremstyle{remark}
\newtheorem{remark}[theorem]{Remark}
\numberwithin{equation}{section}
\newcommand{\nc}{\newcommand} 
\nc{\cH}{{\mathcal H}}
\nc{\cA}{{\mathcal A}}
\nc{\cG}{{\mathcal G}}
\nc{\cC}{{\mathcal C}}
\nc{\cD}{{\mathcal D}}
\nc{\cO}{{\mathcal O}}
\nc{\cI}{{\mathcal I}}
\nc{\cB}{{\mathcal B}}
\nc{\cY}{{\mathcal Y}}
\nc{\cK}{{\mathcal K}} 
\nc{\cX}{{\mathcal X}}
\nc{\cS}{{\mathcal S}}
\nc{\cE}{{\mathcal E}}
\nc{\cF}{{\mathcal F}}
\nc{\cZ}{{\mathcal Z}}
\nc{\cQ}{{\mathcal Q}}
\nc{\cN}{{\mathcal N}}
\nc{\cP}{{\mathcal P}}
\nc{\cL}{{\mathcal L}}
\nc{\cM}{{\mathcal M}}
\nc{\cT}{{\mathcal T}}
\nc{\cW}{{\mathcal W}}
\nc{\cR}{{\mathcal R}}
\nc{\cU}{{\mathcal U}}
\nc{\cJ}{{\mathcal J}}
\nc{\cV}{{\mathcal V}}
\nc{\bH}{{\mathbb H}}
\nc{\bA}{{\mathbb A}}
\nc{\bG}{{\mathbb G}}
\nc{\bC}{{\mathbb C}}
\nc{\bO}{{\mathbb O}}
\nc{\bI}{{\mathbb I}}
\nc{\bB}{{\mathbb B}}
\nc{\bY}{{\mathbb Y}}
\nc{\bK}{{\mathbb K}} 
\nc{\bX}{{\mathbb X}}
\nc{\bS}{{\mathbb S}}
\nc{\bE}{{\mathbb E}}
\nc{\bF}{{\mathbb F}}
\nc{\bZ}{{\mathbb Z}}
\nc{\bQ}{{\mathbb Q}}
\nc{\bN}{{\mathbb N}}
\nc{\bP}{{\mathbb P}}
\nc{\bL}{{\mathbb L}}
\nc{\bM}{{\mathbb M}}
\nc{\bT}{{\mathbb T}}
\nc{\bW}{{\mathbb W}}
\nc{\bU}{{\mathbb U}}
\nc{\bD}{{\mathbb D}}
\nc{\bJ}{{\mathbb J}}
\nc{\bV}{{\mathbb V}}
\nc{\bbZ}{{\mathbb Z}}
\nc{\bR}{{\mathbb R}}
\nc{\fr}{{\rightarrow}}
\nc{\co}{{\nabla}}
\nc{\cu}{{\barline{\nabla}}}
\newcommand{\un}[1]{\underline{#1}}
\DeclareMathOperator{\Ima}{Im}
\DeclareMathOperator{\Ann}{Ann}
\DeclareMathOperator{\Rank}{Rank}
\DeclareMathOperator{\Sing}{Sing}
\DeclareMathOperator{\BL}{BL}
\DeclareMathOperator{\Gr}{Gr}
\DeclareMathOperator{\Hom}{Hom}
\DeclareMathOperator{\pr}{pr}
\DeclareMathOperator{\Ve}{Vert}
\newcommand{\pa}[1]{{\partial_{#1}}}
\newtcolorbox{mybox}{colback=blue!5!white,colframe=blue!75!black}
\begin{document}

\title[On the irreducibility of Hessian loci of cubic hypersurfaces]{On the irreducibility of Hessian loci of cubic hypersurfaces}

\date{May 1, 2024}

\author{Davide Bricalli}
\address{Dipartimento di Matematica,
	Universit\`a degli Studi di Pavia,
	Via Ferrata, 5
	I-27100 Pavia, Italy}
\email{davide.bricalli@unipv.it}

\author{Filippo Francesco Favale}
\address{Dipartimento di Matematica,
	Universit\`a degli Studi di Pavia,
	Via Ferrata, 5
	I-27100 Pavia, Italy}
\email{filippo.favale@unipv.it}

\author{Gian Pietro Pirola}
\address{Dipartimento di Matematica,
	Universit\`a degli Studi di Pavia,
	Via Ferrata, 5
	I-27100 Pavia, Italy}
\email{gianpietro.pirola@unipv.it}


\date{\today}
\thanks{
\\
\noindent {\bf Acknowledgements}: \\
The authors wants to express their gratitude to Carlos D'Andrea and Giorgio Ottaviani for stimulating discussions and for pointing out some interesting papers related to some of the topics here treated. 
The authors are partially supported by INdAM-GNSAGA and by PRIN 2022 ``\emph{Moduli spaces and special varieties}''. The first and second authors are partially supported by the INdAM – GNSAGA Project, ``\emph{Classification Problems in Algebraic Geometry: Lefschetz Properties and Moduli Spaces}'' (CUP$\_$E55F22000270001)
}

\subjclass[2020]{Primary: 14J70; Secondary: 14M12, 14J17, 14J30, 14J35, 14C34}


\keywords{Hessian varieties, cubic hypersurfaces, Thom-Sebastiani, irreducibility}


\maketitle
\begin{abstract}
    We study the problem of the irreducibility of the Hessian variety $\cH_f$ associated with a smooth cubic hypersurface $V(f)\subset \bP^n$. We prove that when $n\leq5$, $\cH_f$ is normal and irreducible if and only if $f$ is not of Thom-Sebastiani type, i.e.,
    roughly, one can not separate its variables. This also generalizes a result of Beniamino Segre dealing with the case of cubic surfaces. The geometric approach is based on the study of the singular locus of the Hessian variety and on infinitesimal computations arising from a particular description of these singularities.
\end{abstract}

\section*{Introduction}

Let $X=V(f)$ be a hypersurface of the projective space $\bP^n$ over an algebraically closed field $\bK$ of characteristic zero. In the case where the determinant $h_f=\det(H_f)$ of the associated Hessian matrix $H_f$ is not equivalently zero, for example for $X$ smooth, it is well-known that the associated Hessian hypersurface $\cH_f=V(h_f)\subset \bP^n$ contains many information of $X$ itself.\\ A sort of {\it {generic Torelli theorem}} for Hessian hypersurfaces is also supposed to be valid (see \cite{CO}), up to some known cases. In particular, in \cite{CO} it is studied the so-called {\it {Hessian map}} 
$$h_{d,n}:\bP(S^d)\dashrightarrow\bP(S^{(n+1)(d-2)}) \qquad [f]\mapsto [h_f]$$
where $S$ denotes the ring $\bK[x_0,\dots,x_n]=\oplus_{d\geq0} S^d$. In the specific case of cubic hypersurfaces, namely when $d=3$, it is known that $h_{3,1}$ has the generic fiber of dimension $1$, $h_{3,2}$ has the generic fiber consisting of $3$ points, $h_{3,3}$ is birational onto its image. As in this last case, the {\it Ciliberto-Ottaviani conjecture} states that the hessian map $h_{d,n}$ should be birational onto its image, for higher values of $n$ and even for different values of $d$. Very recently, this conjecture has been dealt with for the case of curves (of any degree) in \cite{Beo} and \cite{COCD}.
\smallskip

One can argue that also the singular locus $\Sing(\cH_f)$ of the Hessian hypersurface $\cH_f$ must keep track of some crucial aspects of $X$. Along this line, the aim of this paper is a deep study of the singularities of the hypersurface $\cH_f$ associated with a smooth cubic $(n-1)$-fold; in particular, we are interested in the dimension of $\Sing(\cH_f)$ and in the irreducibility of $\cH_f$ itself. Notice that, for low dimensional hypersurfaces, i.e. curves or surfaces, this analysis is classical (see, for instance, \cite{Hut} and \cite{Seg}). More recently, an approach has been developed in \cite{AR} for the case of cubic threefolds, while in \cite{BFP2} the authors have dealt with the higher dimensional cases.
\smallskip

To explain our main result (Theorem $A$), let us recall that a polynomial $f$ (or a hypersurface $X=V(f)$) is said to be {\it of Thom-Sebastiani type}, TS for brevity, if up to a change of coordinates it can be written using two distinct sets of variables (see Definition \ref{DEF:TS}). The name comes from the works \cite{TS} of Sebastiani and Thom and \cite{Tho} of Thom. These polynomials have been extensively studied in several contexts (for example, about their Jacobian ideals in some classical works of Bertini, Longo and Mammana - \cite{Ber}, \cite{Lon} and \cite{Mamm}) and have appeared with other names too in the literature (for example they are called {\it direct sums} in \cite{BBKT} and \cite{Fed}). The Hessian hypersurface associated with a TS polynomial is not irreducible and its singular locus has dimension $n-2$. 
The interesting fact is that this is actually a characterization, as proved in:
\smallskip

\begin{THA}[Theorem \ref{THM:star}]
Assume that $n\leq 5$ and consider $f\in \bK[x_0,\dots,x_n]$ defining a smooth cubic. Then the Hessian hypersurface $\cH_f\subset\bP^n$ is irreducible and normal if and only if $f$ is not of Thom-Sebastiani type.    
\end{THA}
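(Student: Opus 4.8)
The plan is to establish both implications separately, with the ``only if'' direction being the routine one and the ``if'' direction being the substantial content. For the ``only if'' direction, I would observe that if $f$ is of Thom-Sebastiani type, say $f = g(x_0,\dots,x_k) + h(x_{k+1},\dots,x_n)$ after a coordinate change, then the Hessian matrix $H_f$ is block-diagonal, so $h_f = h_g \cdot h_h$ factors nontrivially (each factor being a genuine polynomial since the smoothness of $V(f)$ forces each summand to involve its full set of variables and to be nondegenerate). Hence $\cH_f$ is reducible, and in particular not irreducible, so the contrapositive gives one direction of the equivalence; one should also note (as recalled in the excerpt) that $\Sing(\cH_f)$ has dimension $n-2$ in this case, which will be the benchmark the other direction must rule out.

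For the ``if'' direction --- assume $f$ is \emph{not} TS and show $\cH_f$ is irreducible and normal --- the natural strategy is to invoke Serre's criterion: a hypersurface in $\bP^n$ is normal if and only if it is regular in codimension one, i.e. $\codim_{\cH_f} \Sing(\cH_f) \geq 2$, equivalently $\dim \Sing(\cH_f) \leq n-3$; and a normal hypersurface in $\bP^n$ (being connected, as every hypersurface in projective space is) is automatically irreducible. So the entire problem reduces to the single estimate
\[
f \text{ not TS}, \ V(f) \text{ smooth}, \ n \leq 5 \quad \Longrightarrow \quad \dim \Sing(\cH_f) \leq n-3.
\]
This is presumably exactly where the ``geometric approach based on the study of the singular locus of the Hessian variety and infinitesimal computations'' advertised in the abstract enters. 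I would expect the paper to have already set up, in sections preceding this theorem, a description of a general point of $\Sing(\cH_f)$: a point $p \in \cH_f$ is singular precisely when the Hessian determinant and all its first partials vanish, which by the classical identification $\partial_i h_f = \operatorname{trace}(H_f^{\mathrm{adj}} \cdot \partial_i H_f)$ translates into rank conditions on $H_f(p)$ together with the adjugate. The key structural input is that at a singular point the corank of $H_f(p)$ is at least $2$ (corank $1$ points of the Hessian are smooth points of $\cH_f$ by a standard computation), and one then stratifies $\Sing(\cH_f)$ by the corank $r \geq 2$ of $H_f$.

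The heart of the argument, then, is to bound the dimension of each corank-$r$ stratum and to show that if any stratum had dimension $\geq n-2$ then one could reconstruct a Thom-Sebastiani decomposition of $f$. Concretely, I would pick a general point $p$ of a top-dimensional component $Z$ of $\Sing(\cH_f)$, let $K = \ker H_f(p)$ with $\dim K = r \geq 2$, and study how $K$ and the quadratic form $H_f(p)$ vary as $p$ moves in $Z$; the infinitesimal/first-order analysis of this variation (differentiating the defining equations of the stratum along tangent directions to $Z$) should produce, when $\dim Z$ is too large, a fixed linear subspace with respect to which $f$ splits --- i.e. a TS structure --- giving the required contradiction. The restriction $n \leq 5$ is what keeps the combinatorics of the possible coranks $r \in \{2,\dots\}$ and the dimension counts tractable: for each admissible pair $(n,r)$ one has an explicit inequality to verify, and the small number of cases can be handled essentially by hand. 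The main obstacle I anticipate is precisely this case analysis on $(n,r)$ together with making the ``large stratum $\Rightarrow$ splitting of $f$'' implication rigorous: one must carefully control the second fundamental form of the stratum and rule out degenerate configurations (e.g. where the kernels $K_p$ sweep out a positive-dimensional family rather than being constant) before the TS decomposition can be extracted. I would also keep an eye on the boundary between strata, since a priori the top-dimensional component of $\Sing(\cH_f)$ could be supported on the closure of a lower-corank stratum, so the dimension bounds must be uniform across the stratification.
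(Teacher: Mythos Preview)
Your reduction is correct: the easy direction follows from the block-diagonal Hessian (Remark~\ref{REM:Pignolotti}), and the hard direction is equivalent to showing $\dim\Sing(\cH_f)\leq n-3$ for $f$ not of TS type. You also correctly identify that $\Sing(\cH_f)$ is the corank-$\geq 2$ locus $\cD_{n-1}(f)$. But from there your plan diverges significantly from what the paper does, and your proposed mechanism --- ``differentiate the stratum equations and watch how the kernel $K_p$ varies to extract a fixed subspace'' --- is too unstructured to actually carry through. The paper does \emph{not} do a direct case analysis on pairs $(n,r)$ by bounding each corank stratum.

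The structural tool you are missing is the incidence correspondence $\Gamma_f=\{([x],[y]):xy=0\}\subset\bP(A^1)^2$ and its singular locus, which is governed by \emph{triangles} $([x],[y],[z])$ with $xy=yz=zx=0$ (Lemma~\ref{LEM:Triangles_and_Singularities}). If $\Sing(\cH_f)$ has dimension $n-2$, one obtains (Lemma~\ref{LEM:triangfam}) an irreducible $(n-2)$-dimensional family $\cF$ of such triangles with $\pi_1:\cF\to Y_1\subset\Sing(\cH_f)$ generically finite. This is the object on which the infinitesimal analysis is performed. Two preparatory results are essential: Theorem~\ref{THM:charactTS} (Theorem~B), which says $f$ is TS iff some $\cD_{k+1}(f)$ contains a $\bP^k$ --- this is how one eventually ``reconstructs a TS decomposition'' in your language --- and Theorem~\ref{THM:moon} (Theorem~C), which for $n\leq 5$ guarantees the general triangle of $\cF$ has no vertex on $X$, so that $A^1=V_1\oplus V_2$ with $V_1=\langle x,y,z\rangle$ and $V_2=\Ann_{A^1}(x^2,y^2,z^2)$ of dimension $n-2$.

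The actual case analysis is then on the \emph{Jordan type of the endomorphisms} $\psi_i=d_T\pi_i\circ(d_T\pi_1)^{-1}$ of $V_2$, not on the corank $r$. For $n=3$ this is a one-line computation; for $n=4$ one treats ``$\psi_2$ diagonalizable'' versus ``$\psi_2$ non-diagonalizable'' separately (Proposition~\ref{PROP:3fold}); for $n=5$ there are three cases by the number of Jordan blocks, with a further subdivision by which eigenvalues vanish, and one stubborn subcase (Lemma~\ref{LEM:diagonalizable:VeryBadCase}) where the infinitesimal method alone fails and one has to reconstruct the equation of $f$ explicitly from the accumulated relations in $A_f$ and check the local dimension of $\Sing(\cH_f)$ by hand. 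Your proposal's ``second fundamental form of the stratum'' and ``kernels sweeping out a family'' do not capture this structure; without the triangle formalism and the maps $\psi_i$, you have no handle on the first-order deformation equations $x_ix_j'+x_jx_i'=0$ (Lemma~\ref{LEM:TGVec_Triang_First_Order}) that drive the whole argument.
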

\smallskip
The problem of determining whether a polynomial is of TS type is interesting and investigated in the literature (see, for example, \cite{BBKT} or \cite{Fed}), also from an algorithmic point of view. As said above, one can apply a sort of ``hessian test'': if the Hessian hypersurface associated to a polynomial $f$ is normal, then $f$ can't be of TS type. Furthermore, with a strong geometric approach, Theorem $A$ guarantees that this ``hessian test'' is actually a complete test for (smooth) cubic forms with at most 6 variables. 
\smallskip

Beyond the smoothness hypothesis, which is anyway necessary (see Remark \ref{REM:counterexamples} for details), one could conjecture that the same result still holds for higher dimensions or higher degrees. Even if we strongly believe that Theorem $A$ is valid for smooth cubic hypersurface of any dimension, one can see that this is not the case, for example, in degree $4$ (see again Remark \ref{REM:counterexamples}).
\smallskip

Besides the fact that cubic hypersurfaces are classically endowed with interesting and particular properties in relation to their geometry and also, for example, to their associated Hessian variety (see, for example, \cite{Dol}, \cite{Rus}, \cite{GR}, \cite{Huy},...), the peculiarity of the case of (smooth) cubics lies in the framework we want to deal with and in the techniques used. Indeed, Hessian loci of cubic hypersurfaces are equipped, among other things, with a special symmetry that will be a key ingredient in the whole article and which makes a crucial difference with the higher degree cases. Indeed, if $X=V(f)$ is a general cubic hypersurface then its hessian $\cH_f$ is a singular Calabi-Yau variety with a fixed point free rational involution. Indeed, for $d=3$, one can observe that 
$$H_f(x)\cdot y=H_f(y)\cdot x.$$
Such a relation can be easily translated in terms of the associated apolar ring $A_f=D/\Ann_D(f)$, where $D=\bK[\frac{\partial}{\partial x_0},\dots,\frac{\partial}{\partial x_0}]$, as $x\cdot y=y\cdot x$. Under the natural identification $\bP(A^1)\simeq \bP^n$, (which comes from the Gorenstein duality of the apolar ring - see \cite{Mac94} or the comprehensive book \cite{bookLef}), one can define the natural incidence correspondence
$$\Gamma_f=\{([x],[y])\in\bP^n\times\bP^n \ | \ H_f(x)\cdot y=0\}=\{([x],[y])\in\bP(A^1)\times\bP(A^1) \ | \ x\cdot y=0\}.$$
Observe that it dominates, via the two projections, the Hessian hypersurfaces $\cH_f$. Moreover, $\Gamma_f$ is also equipped with an involution $\tau(([x],[y]))=([y],[x])$ which is fixed point free and descends to the above-mentioned fixed point free rational involution defined on the Hessian hypersurface $\cH_f$.
\smallskip

Another key point is the determinantal structure of the Hessian hypersurface. Indeed, $\cH_f$ is equipped with a natural rank-decreasing filtration
$$\cH_f=\cD_n(f)\supseteq \mbox{Sing}(\cH_f)\supseteq\cD_{n-1}(f)\supseteq\dots \supseteq D_i(f)\supseteq\dots  \supseteq D_{0}(f),$$ 
where $\cD_k(f)=\{[x]\in\bP^n \ | \ \mbox{Rank}(H_f(x))\leq k\}$. In the case where $V(f)$ is {\it any} smooth cubic hypersurface, the second inclusion in the above filtration is actually an equality as proved in \cite{AR} for cubic threefolds and in \cite{BFP2} for $n\geq5$. Moreover, the authors proved that for $X$ general, each of the $D_i$'s has the expected dimension and we computed the basic invariant in some low dimensional cases. In particular, for a smooth cubic $V(f)$, we have 
$$\mbox{dim}(\mbox{Sing}(\cH_f))\in\{n-3,n-2\}.$$
Indeed, $\cH_f$ is reduced and the expected value, namely $n-3$, is achieved for $f$ general. Observe that in the other case, we would have that the Hessian locus $\cH_f$ is not normal (indeed, see \cite[Proposition 8.23]{Hart}, recall that for a hypersurface in $\bP^n$ being normal is equivalent to be regular in codimension $1$). Then, with Theorem $A$, we partially answer the following natural question:
\begin{center}
{\it
    When, for a smooth cubic hypersurface $V(f)$, is $\cH_f$ not normal?
    }
\end{center}
When $n=2$, given a smooth cubic curve $X=V(f)$, the associated Hessian curve $\cH_f$ is singular if and only if $X$ is the Fermat curve. It is remarkable that Beniamino Segre in 1943 proved that a similar result also holds for cubic surfaces in a projective $3$-dimensional space. Indeed, (see \cite{Seg}) given a smooth cubic surface $X=V(f)\subset\bP^3$, $\cH_f$ is reducible if and only if $X$ is cyclic, that is up to a change of coordinate we can write $f=z_0^3+g(z_1,z_2,z_3)$.
\smallskip

It is a real misfortune, due likely to the war and to racial issues, that the book of Beniamino Segre which focuses on non-singular cubic surfaces, is not easy to be found. Its analysis is based on the use of {\it Sylvester's Pentahedral Theorem} which, with a modern terminology, says that the general cubic surface has {\it Waring rank} equal to $5$, i.e., after a suitable change of coordinates, it can be written as the zero locus of $\sum_{i=1}^{5}L_i^3$ where $L_1,\dots,L_5\in S^1$. This description has also been recently used for example in \cite{DVG} and \cite{CO}. With Theorem $A$, we extend  Segre's result to cubic threefolds and fourfolds. 
\medskip

Let us now explain the main features of our proof. First of all, let us observe that, since no useful ``Sylvester form tool'' seems to exist for cubic forms in $\bP^n$ with $n\geq4$, a completely new strategy must be used. In this environment, it is a great pleasure to acknowledge our main source of inspiration: Adler's work. In a remarkable series of appendices to the book \cite{AR}, among many other results, Adler set up a method to study the singular locus of the Hessian locus $\cH_f$ associated with a cubic hypersurface $V(f)$. He considered the correspondence $\Gamma_f$ introduced above, which can be seen as a partial desingularization of $\cH_f$ and moreover, he proved that the singular locus of $\Gamma_f$ has a ``triangle structure''. More precisely, a point $([x],[y])$ is singular for $\Gamma_f$ if and only if there exists $[z]\in\bP^n$ such that $([x],[y]),([x],[z]),([y],[z])\in\Gamma_f$.
\smallskip

Our crucial observation is that if $\mbox{Sing}(\cH_f)$ contains a component of dimension $n-2$ then the same holds also for $\mbox{Sing}(\Gamma_f)$. We have then a large amount of triangles to deal with and, moreover, such a description is greatly enlightened by using the {\it {apolar-geometric}} method we already exploited in our proof of a Gordan-Noether theorem (see \cite{GN}, \cite{Rus}, \cite{BFP}). The whole proof is then devoted to showing that ``too many triangles'' for $\cH_f$ force $f$ to be of TS type. 
\smallskip

Two results can be thought of as the main ingredients to this aim. First of all, we give a characterisation of the cubic polynomials of TS type in terms of the Hessian loci $\cD_k$ appearing in the above-mentioned filtration:

\begin{THB}[Theorem \ref{THM:charactTS}]
A polynomial $f\in \bK[x_0,\dots,x_n]$ defining a smooth cubic is of TS type of the form $f(x_0,\dots,x_n)=f_1(x_0,\dots,x_k)+f_2(x_{k+1},\dots,x_n)$ if and only if $\cD_{k+1}(f)$ contains a $\bP^{k}$.
\end{THB}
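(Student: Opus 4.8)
The plan is to pass to the apolar algebra $A_f=D/\Ann_D(f)$ and exploit the Gorenstein pairing $A_f^1\times A_f^2\to A_f^3\cong\bK$ together with the cubic symmetry $H_f(x)\cdot y=H_f(y)\cdot x$. Under $\bP^n\cong\bP(A_f^1)$ the Hessian at $[x]$ is the matrix of the multiplication $m_x\colon A_f^1\to A_f^2$, $y\mapsto x\cdot y$, so that
\[
[x]\in\cD_{k+1}(f)\ \Longleftrightarrow\ \dim_\bK(x\cdot A_f^1)\le k+1\ \Longleftrightarrow\ \dim_\bK\Ann_{A_f^1}(x)\ge n-k,
\]
where $\Ann_{A_f^1}(x)=\{y\in A_f^1 : x\cdot y=0\}$ and $\dim(x\cdot A_f^1)+\dim\Ann_{A_f^1}(x)=n+1$. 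The easy implication is then immediate: if $f=f_1(x_0,\dots,x_k)+f_2(x_{k+1},\dots,x_n)$ then $A_f^1=U\oplus U'$ with $U$ (resp.\ $U'$) spanned by the classes of $\partial_0,\dots,\partial_k$ (resp.\ $\partial_{k+1},\dots,\partial_n$) and $U\cdot U'=0$ in $A_f^2$; for $x\in U$ one has $x\cdot A_f^1=x\cdot U$, of dimension $\le k+1$, so $\bP(U)\cong\bP^k$ lies in $\cD_{k+1}(f)$.

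For the converse, let $\bP^k=\bP(U)\subseteq\cD_{k+1}(f)$ with $\dim_\bK U=k+1$, and put $U':=\Ann_{A_f^1}(U)=\{y\in A_f^1: x\cdot y=0\ \forall x\in U\}$, which by the Gorenstein pairing equals $(U\cdot A_f^1)^{\perp}$, so $\dim U'=n+1-\dim(U\cdot A_f^1)$. I claim $A_f^1=U\oplus U'$; since $U\cdot U'=0$ by construction, a basis of $A_f^1\;(=W$, as $f$ is not a cone$)$ adapted to this splitting gives $\partial_i\partial_j f=0$ for all $i\le k<j$, i.e.\ $f=f_1(x_0,\dots,x_k)+f_2(x_{k+1},\dots,x_n)$ after a linear change of coordinates; smoothness forces $f_1,f_2\neq 0$ (else $V(f)$ is a cone), so $f$ is of TS type. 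Everything thus reduces to two statements. The first, $U\cap U'=0$, is easy: if $0\neq x\in U\cap U'$ then $x^2=0$ in $A_f^2$, i.e.\ $\partial_x^2 f=0$; in suitable coordinates this means $f=x_0\,Q(x_1,\dots,x_n)+C(x_1,\dots,x_n)$, and then $[1:0:\cdots:0]$ is a singular point of $V(f)$, a contradiction. The second, $\dim_\bK(U\cdot A_f^1)\le k+1$ (equivalently $\dim U'\ge n-k$), is the real content; granting it, $\dim(U+U')\ge(k+1)+(n-k)=n+1$ yields $A_f^1=U\oplus U'$ and closes the argument.

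To prove $\dim_\bK(U\cdot A_f^1)\le k+1$ I would argue as follows. The assignment $x\mapsto H_f(x)$ is linear from $U$ to the space of symmetric $(n+1)\times(n+1)$ matrices and injective (for smooth $f$, $H_f(x)=0$ forces $f$ to be a cone, hence $x=0$), so it embeds $\bP(U)$ as a \emph{linear} $\bP^k$ contained in the determinantal locus of symmetric matrices of rank $\le k+1$. One then invokes the rigidity of such linear spaces of bounded-rank symmetric forms — they are essentially \emph{compression spaces}, all members vanishing on a common subspace of the complementary dimension $n-k$ — the point being that the possible non-compression configurations are ruled out by the cubic symmetry $H_f(x)\cdot y=H_f(y)\cdot x$ enjoyed by the whole family $\{H_f(x):x\in U\}$ (already for a pencil, $k=1$, this symmetry degenerates the ``moving kernel'' lines on the rank-$\le 2$ locus into lines of compression type). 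Once all the $H_f(x)$, $x\in U$, are known to kill a common $(n-k)$-dimensional subspace $K\subseteq A_f^1$, one gets $K\subseteq\Ann_{A_f^1}(U)=U'$, hence $\dim U'\ge n-k$.

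Concretely I expect to first dispose of the case in which $\Rank H_f(x)\le r<k+1$ for \emph{every} $x\in U$: there the analysis of rank drops — in particular that rank $1$ at a point forces $f$ to be cyclic, $f\sim x_0^3+g(x_1,\dots,x_n)$ with $g$ smooth — forbids an entire $\bP^k$ of rank drops, so a general $[x]\in\bP(U)$ must have $\Rank H_f(x)=k+1$ exactly. In that case one fixes a very general $x_0\in U$, sets $M=\Ima H_f(x_0)$ (dimension $k+1$) and $K_0=\ker H_f(x_0)$ (dimension $n-k$), and for each $x_1\in U$ studies the pencil $H_f(x_0+tx_1)=H_f(x_0)+tH_f(x_1)$: since its restriction to $K_0$ is $t\,H_f(x_1)|_{K_0}$ while its rank stays $\le k+1$, the cubic symmetry should force $\Ima H_f(x_1)\subseteq M$, whence $U\cdot A_f^1=M$ is $(k+1)$-dimensional. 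The main obstacle is precisely this last point — controlling linear systems of bounded-rank symmetric forms and extracting the common kernel from the cubic symmetry, uniformly in $n$ and $k$ and in the presence of possible rank drops along $\bP(U)$; the rest is bookkeeping with the Gorenstein pairing.
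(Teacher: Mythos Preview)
Your overall architecture is the same as the paper's: produce a complementary subspace $U'=\Ann_{A_f^1}(U)$ with $U\cdot U'=0$, show $U\cap U'=0$ via smoothness, and deduce the TS splitting from $A_f^1=U\oplus U'$. The reduction to the single claim $\dim_\bK U'\ge n-k$ (equivalently: the matrices $H_f(x)$, $x\in U$, share a common kernel of dimension $n-k$) is correct.

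The gap is exactly where you locate it. Your argument for the common kernel is only a heuristic: linear spaces of symmetric matrices of bounded rank are \emph{not} in general compression spaces, and the cubic identity $H_f(x)\cdot y=H_f(y)\cdot x$, applied to a pencil $H_f(x_0)+tH_f(x_1)$, does not by itself force $\Ima H_f(x_1)\subseteq\Ima H_f(x_0)$. A telltale sign that something is missing is that your sketch never uses the smoothness of $V(f)$ at this step, whereas the theorem is false for singular cubics (the paper gives explicit counterexamples).

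The paper's proof of this step is different and geometric (their Proposition~\ref{PROP:linearfactorsinDegLociDelux}). View $H_f(v)$ as the matrix of the quadric $Q_v=V(\partial_v f)$. The hypothesis $\bP(U)\subseteq\cD_{k+1}(f)$ says every member of the $k$-dimensional linear system $|W|=\{Q_v:[v]\in\bP(U)\}$ has rank $\le k+1$, hence is singular along some $\bP^{n-k-1}$. Since $V(f)$ is smooth, the partials of $f$ form a regular sequence, so the base locus $B=\mathrm{BL}(|W|)$, cut by $k+1$ quadrics inside $|J_f^2|$, has \emph{pure} dimension $n-k-1$. By Bertini the general member of $|W|$ is smooth away from $B$; combined with the fact that its singular locus contains a $\bP^{n-k-1}$, one concludes that some irreducible component of $B$ is a $\bP^{n-k-1}$ contained in the singular locus of \emph{every} $Q_v$. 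That $\bP^{n-k-1}$ is the common kernel, and the rest of your argument (which matches the paper's) then goes through. Smoothness enters precisely through the regular-sequence/pure-dimension step, which is what your compression-space outline is missing.
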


The second result allows us to make specific assumptions on the general triangle we will deal with. In particular, by considering an irreducible component $\cF$ of the variety parametrizing these triangles for $\cH_f$ and denoting by $\pi_i$ its natural projections, we have 

\begin{THC}[Theorem \ref{THM:moon}]
Assume $n\leq 5$ and let $X=V(f)$ be a smooth cubic hypersurface in $\bP^n$ not of TS type. If $\cF$ is an irreducible family of triangles for $\cH_f$ with $\dim(\pi_1(\cF))=\dim(\cF)=n-2$, then the general element in $\cF$ is such that none of its vertices belongs to $X$.
\end{THC}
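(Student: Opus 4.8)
The plan is to argue by contradiction: assume that for the general triangle $([x],[y],[z])$ in $\cF$ at least one vertex, say $[x]$, lies on $X=V(f)$. Since $\cF$ is irreducible and $\dim(\pi_1(\cF))=\dim(\cF)=n-2$, the map $\pi_1$ is generically finite onto its image, so the vertices $[x]$ sweep out an $(n-2)$-dimensional subvariety $Z_1=\pi_1(\cF)\subseteq X$. The first step is to exploit the apolar-geometric dictionary: the triangle condition $x\cdot y=x\cdot z=y\cdot z=0$ in $A_f^1$ together with $[x]\in X$ (which, in apolar terms, means $x^2\neq 0$ but some degeneracy of the quadratic form $\ell\mapsto x\cdot\ell$ associated to $[x]$ being on the Hessian — more precisely $[x]\in\cH_f$, hence $H_f(x)$ is singular, and being a vertex of a triangle forces a precise rank drop as in Adler's analysis) should be translated into linear-algebra constraints on the subspace $W=\langle x,y,z\rangle\subseteq A_f^1$. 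The key is that a triangle at which one vertex is on $X$ is a special, more degenerate configuration, so the locus of such triangles is expected to have dimension strictly smaller than $n-2$ unless $f$ degenerates.

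Second, I would stratify $\cF$ according to how many vertices lie on $X$ and compute the dimension of each stratum directly, using the incidence correspondence $\Gamma_f$ and its triangle locus. Concretely: fixing $[x]\in X$, the triangle condition says $[y],[z]\in \bP(\Ann(x))\cap \text{(some quadric or linear section)}$ where $\Ann(x)=\{\ell\in A_f^1 : x\cdot \ell=0\}$, and additionally $y\cdot z=0$. One counts: $[x]$ moves in an $(n-1)$-dimensional family (all of $X$), but the extra incidence conditions coming from $[x]$ being simultaneously on $X$ \emph{and} a vertex of a triangle cut this down; then $[y]$ and $[z]$ each move in a proper linear subspace of $\Ann(x)$ intersected with a hypersurface. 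The numerology should give dimension $\leq n-3$ for the locus of "triangles with a vertex on $X$", contradicting $\dim\cF=n-2$. The case analysis splits on whether $\Ann(x)$ has the generic codimension or drops, which is controlled by Theorem B: if $\cD_k(f)$ or $\cD_{k+1}(f)$ becomes too large then $f$ is of TS type, which is excluded by hypothesis. This is where the constraint $n\leq 5$ should enter, since the dimension estimates are tight only in low range.

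The third step is to handle the possibility that, even with the naive count, the locus of triangles with a vertex on $X$ accidentally has dimension $n-2$ without $f$ being TS. Here I would use the fixed-point-free involution $\tau$ on $\Gamma_f$ and the symmetry among the three vertices of a triangle: if the general triangle in $\cF$ had a vertex on $X$, one can try to propagate this (via $\tau$ and the $S_3$-symmetry of the triangle) to force two or all three vertices on $X$, and then the configuration becomes rigid enough that $X$ must contain a linear space of the forbidden dimension, invoking Theorem B to conclude $f$ is TS — a contradiction. The main obstacle, I expect, is the dimension bookkeeping in the intermediate stratum (exactly one or exactly two vertices on $X$): the subspaces $\Ann(x)$, $\Ann(y)$, $\Ann(z)$ interact in a way that is not transparent, and ruling out an unexpected $(n-2)$-dimensional component will likely require the infinitesimal/tangent-space computation on $\Gamma_f$ alluded to in the introduction (the "triangle structure" of $\Sing\Gamma_f$), rather than a pure dimension count. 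Establishing that this infinitesimal obstruction is nonzero — i.e. that the relevant stratum is genuinely lower-dimensional — is the heart of the argument.
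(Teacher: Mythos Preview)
Your proposal has a real gap right at the start. You assume the general $[x]\in\pi_1(\cF)$ lies on $X$, but this is precisely the vertex that \emph{cannot} be on $X$: since $\dim(\pi_1(\cF))=n-2$ and triangles are symmetric under permutation of vertices, the paper first proves (Lemma \ref{LEM:bound}(a)) that if $Y_i=\pi_i(\cF)\subset X$ then $\dim(Y_i)\leq n-3$. This is a short tangent-space computation: for a triangle $T=([x],[y],[z])$ with $[x]\in X$, one has $T_{Y_1,[x]}\subseteq \Ann_{A^1}(x^2,y^2,z^2)/\langle x\rangle$, and since $x^2,y^2,z^2$ are independent (Lemma \ref{LEM:squareindep}) this space has dimension $n-3$. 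So the hypothesis $\dim(Y_1)=n-2$ already forces $Y_1\not\subset X$; the contradiction assumption must be that $Y_2$ or $Y_3$ lies in $X$.

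More seriously, your dimension count in Step 2 and the involution/propagation idea in Step 3 miss the actual mechanism. The key geometric input is Remark \ref{RMK:planesin34folds}: if \emph{all three} vertices of a triangle lie on $X$, then $x^3=y^3=z^3=xy=xz=yz=0$ forces the $2$-plane $\langle x,y,z\rangle$ to be contained in $X$. For $n\leq 4$ a smooth cubic contains no $2$-planes, and for $n=5$ at most finitely many, so the locus $\cF_c=\cF\cap(X\times X\times X)$ has dimension at most $0$. The paper then runs a concrete fiber-dimension and incidence analysis (using Lemma \ref{LEM:contr_fibers} on non-contracting fibers, and repeatedly building lines $\langle [y],[z]\rangle\subset X$ that sweep out a $\bP^2$ or $\bP^3$ inside $X$) to manufacture a point of $\cF_c$ or a positive-dimensional family in $\cF_c$, yielding the contradiction. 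Theorem \ref{THM:charactTS} is invoked only to rule out certain extreme kernel dimensions (e.g.\ $\iota([x_0])\simeq\bP^{n-1}$), not as the main engine. There is no infinitesimal obstruction computation in this proof---that machinery is reserved for Theorem A.
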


Even if, a fortiori, the situation presented in the above theorem can not be realised, let us stress that this result will allow us to set the right framework on which all the proof is based.
\smallskip

The problem (after some reduction preliminaries) becomes to compute the Zariski tangent space at the general point of $\cF$. 
This approach leads almost immediately to a conclusion in the case of cubic surfaces and it is reasonably accessible for $n=4$.
In the fourfold case, the computation becomes instead much more complicated: there are really many sub-cases to be considered (this is certainly due also to the fact that for $n\leq 4$ all the cubic of Thom-Sebastiani type are indeed cyclic, which is not true anymore for $n\geq5$). 
\smallskip

It is interesting to notice that there is a family of cubic fourfolds (which is considered in Lemma \ref{LEM:diagonalizable:VeryBadCase}), where the infinitesimal methods are not enough in order to conclude. For these hypersurfaces, in the spirit of the possible {\it {Torelli theorem}}, we recover the equation of the cubic fourfold $V(f)$ and then, with a direct computation we show that the dimension of the singular locus of $\cH_f$ is actually the expected one, i.e. $2$, unless $f$ is of TS type.
\smallskip


The plan of the article is the following. After setting the notation and proving some preliminary results in Section \ref{SEC:1}, in Section \ref{SEC:TS} we deal with polynomials of Thom-Sebastiani type and we prove Theorem $B$. In Section \ref{SEC:FamiliesOfTriangles}, we focus on the study of particular families of triangles and we prove Theorem $C$. Finally, in Sections \ref{SEC:star_CubicThreefoldCase} and \ref{SEC:star_CubicFourfoldCase} we prove our main result, namely Theorem $A$, respectively for $n\leq4$ and for the case of cubic fourfolds.


\section{Preliminaries and first results}
\label{SEC:1}

In this first section, we set the notation and present some preliminary results, some of them proved in \cite{BFP2}. For a complete comprehension of standard Artinian Gorenstein Algebras, which we are going to introduce, one can refer to \cite{bookLef}. Consider $\bK$ an algebraically closed field of characteristic $0$ and the projective space $\bP^n$ for $n\geq 2$. Let us set 
$$S=\bK[x_0,\dots,x_n]=\bigoplus_{k\geq0}S^k\qquad \mbox{ and }\qquad \cD=\bK[y_0,\dots,y_n]=\bigoplus_{k\geq0}\cD^k$$
so that $S$ is the homogeneous coordinate ring of $\bP^n$ and $\cD$ is the graded algebra of linear differential operators on $S$, where we define $y_i$ as the first partial derivative $\frac{\partial}{\partial x_i}$. If $v\in \bK^{n+1}$, we will denote by $\pa{v}$ the derivative in the direction of $v$, i.e. $\sum_{i=0}^n v_iy_i$.
\smallskip

Let us now consider a homogeneous polynomial $f$ of degree $d$, i.e. an element of $S^d$. Two objects can then be associated with $f$ in a natural way:
\begin{itemize}
    \item the {\it Jacobian ring} of $f$, defined as the quotient $R_f=S/J_f$, where $J_f$ denotes the Jacobian ideal of $f$, spanned by the partial derivatives of $f$;
    \item the {\it apolar ring} of $f$, defined as the quotient $A_f=\cD/\mathrm{Ann}_{\cD}(f)$, where $\mathrm{Ann}_\cD(f)$ is the annihilator ideal of $f$, i.e. the ideal in $\cD$ given by $\{\delta\in \cD \ | \ \delta(f)=0\}$.
\end{itemize}

Both the Jacobian and the apolar ring of $f$ are graded Artinian algebras with socle in degree respectively $(n+1)(d-2)$ and $d$, i.e. $R_f=R^0\oplus R^1\oplus\cdots\oplus R^{(n+1)(d-2)}$ and $A_f=A^0\oplus A^1\oplus\cdots\oplus A^d$. One can also see that they are standard, i.e. generated in degree $1$, and that they satisfy the so-called Poincar\'e (or Gorenstein duality), i.e. for example the multiplication map $A^{d-k}\times A^k\rightarrow A^d$ is a perfect pairing for every suitable positive integer $k$. In other words, they are both examples of what we call SAGAs, an acronym for standard Artinian Gorenstein algebras. 
\smallskip

Finally, given $f$ as above, we can then define the associated {\it Hessian matrix} and the {\it hessian polynomial}, respectively the square symmetric matrix whose entries are the second partial derivatives of $f$ with respect to the $x_i$'s and the determinant of such a matrix, i.e.
$$H_f=((y_iy_j)(f))_{i,j=0,\dots,n} \qquad \mbox{and} \qquad h_f=\det(H_f).$$

Let us observe that if the zero locus of $f$, $X=V(f)\subset\bP^n$, is a smooth hypersurface, then the hessian determinant $h_f$ belongs to $S^{(n+1)(d-2)}\setminus\{0\}$. In this case, one can define the {\it Hessian hypersurface} $\cH_f$ associated with $f$ (or with $X$) as the zero locus of such a polynomial, i.e. 
$$\cH_f=V(h_f).$$
The smoothness of $X=V(f)$ implies also that the associated apolar ring $A_f$ is such that $A^1$ has dimension $n+1$: indeed such a dimension is strictly smaller than $n+1$ if and only if $V(f)$ is a cone. From the natural pairing $S\times \cD\rightarrow S$, one can then deduce an isomorphism
$$\bP^n\simeq \bP((S^1)^*)\simeq \bP(A^1).$$

From now on let us focus on the case {\bf{$d=3$}}: the first result we need to recall is the following Proposition (see \cite[Proposition 1.2]{BFP2}), which allows us to interpret the cubic hypersurface $X=V(f)$, its singular locus and its associated Hessian variety in terms of the apolar ring $A_f$.

\begin{proposition}
\label{PROP:descrapol}
Given a cubic hypersurface $X=V(f)$ (not a cone) and the corresponding $A_f$, we have 
\begin{enumerate}[(a)]
    \item Under the identification $\bP^n\simeq \bP(A^1)$, computing $H_f(x)\cdot y$, for $[x],[y]\in \bP^n$, is equivalent to compute $xy\in A^2$;
    \item $X=\{[y]\in \bP(A^1)\,|\, y^3=0\}$;
    \item $\Sing(X)=\{[y]\in \bP(A^1)\,|\, y^2=0\}$;
    \item $\cH_f=\{[y]\in \bP(A^1)\,|\,\, \exists\, [x]\in \bP(A^1) \mbox{ with } xy=0\}$.
\end{enumerate}
\end{proposition}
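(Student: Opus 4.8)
The plan is to unwind the Gorenstein/apolarity dictionary so that each statement becomes a purely formal identity in the apolar ring $A_f$. The starting point is the perfect pairing $S^1 \times \cD^1 \to \bK$ which, since $\dim A^1 = n+1$ (equivalently $V(f)$ is not a cone), upgrades to the identifications $\bP^n \simeq \bP((S^1)^*) \simeq \bP(A^1)$ already recorded in the text. Concretely, a point $[x] \in \bP^n$ with coordinate vector $x = (x_0, \dots, x_n)$ corresponds to the class of the operator $\partial_x = \sum_i x_i y_i \in A^1$; I will use $x$ ambiguously for the point, the vector and the element of $A^1$, as the paper already does.

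For part (a), I would compute directly: for $f = \frac{1}{6}\sum a_{ijk} x_i x_j x_k$ (a cubic), the Hessian matrix has entries $(H_f)_{ij} = y_i y_j (f) = \sum_k a_{ijk} x_k$, so the $i$-th coordinate of $H_f(x) \cdot y$ is $\sum_{j,k} a_{ijk} x_k y_j$, i.e. the linear form dual to $\sum_{j} \big(\sum_{i,k} a_{ijk} x_k y_j\big)\, (\text{dual of } y_i)$. On the other hand, $\partial_x \partial_y (f) = \sum_{i,j,k} a_{ijk} x_i y_j (\cdot)$ lives in $A^2$, and under the degree-2 Gorenstein pairing $A^1 \times A^2 \to A^3 \simeq \bK$ one checks that $\langle z, xy\rangle$ recovers exactly $z^{t} H_f(x) y$. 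Thus the linear functional "$\mapsto H_f(x)\cdot y$'' on $A^1$ is the image of $xy \in A^2$ under Gorenstein duality $A^2 \xrightarrow{\sim} (A^1)^*$; in particular $H_f(x)\cdot y = 0$ (as a vector) iff $xy = 0$ in $A^2$. This is the key computation and really the only place where one must be careful with the normalization of the pairing and the symmetry $xy = yx$; everything else is a corollary.

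Granting (a), the remaining items are immediate. For (b): $[y] \in X$ means $f(y) = 0$; but $f(y) = \frac{1}{6}\, y^{t} H_f(y)\, y$ (Euler), and by (a) this is (a multiple of) $\langle y, y^2\rangle = y^3 \in A^3 \simeq \bK$, so $f(y) = 0 \iff y^3 = 0$. For (c): $[y] \in \Sing(X)$ means all first partials $\partial_i f(y) = 0$, i.e. $H_f(y)\cdot y = 0$ as a vector (since $\partial_i f$ is the $i$-th entry of $H_f(y)\cdot y$ by Euler's identity applied to the quadric $\partial_i f$), which by (a) is $y^2 = 0$ in $A^2$. For (d): $[y] \in \cH_f$ means $\det H_f(y) = 0$, i.e. $H_f(y)$ has nontrivial kernel, i.e. there is $0 \neq x$ (hence $[x] \in \bP^n$, using again that $V(f)$ is not a cone so $H_f(y)$ acts on the full $A^1$) with $H_f(y)\cdot x = 0$, which by (a) is exactly $xy = 0$ in $A^2$. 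The only genuine obstacle is pinning down the dictionary of (a) precisely — matching the quadratic form $z \mapsto z^t H_f(x) z$ with the cubic multiplication $z \mapsto z^2 x = xz^2$ under the socle identification $A^3 \simeq \bK$, and verifying that the symmetry $H_f(x)\cdot y = H_f(y)\cdot x$ of the text corresponds to commutativity in $A_f$; once that is in hand, (b)–(d) are three-line consequences via Euler's identity and the rank interpretation of the Hessian determinant.
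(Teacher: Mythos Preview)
The paper does not actually prove this proposition: it is quoted verbatim from \cite[Proposition~1.2]{BFP2} and stated without argument, so there is no in-paper proof to compare against.

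Your argument is correct in substance. The key identification in~(a) is exactly the right one: under the Macaulay isomorphism $A^2 \xrightarrow{\sim} S^1$, $[\delta]\mapsto \delta(f)$, the class $xy$ goes to $\partial_x\partial_y(f)$, whose coefficient vector in the monomial basis is $H_f(x)\cdot y$; equivalently, $\langle z,xy\rangle_{A^3} = z^{t}H_f(x)\,y$ for all $z$. Once this is established, your deductions of (b)--(d) via Euler's identity and the rank interpretation of the Hessian are clean and complete. Two small expository points: in~(a) the phrase ``the linear functional `$\mapsto H_f(x)\cdot y$' on $A^1$'' is garbled (that object is a vector, not a functional---you mean its image under $(A^1)^{**}\simeq A^2$), and in~(d) the parenthetical about $V(f)$ not being a cone is misplaced: that hypothesis is needed only to have $\dim A^1=n+1$ so that the identification $\bP^n\simeq\bP(A^1)$ is available at all, not for $H_f(y)$ to act on $A^1$. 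Neither affects the validity of the proof.
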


In this paper, we deal with a homogeneous cubic polynomials whose zero locus is smooth: we will denote by $\cU\subset\bP(S^3)$, the locus of such elements. 
\smallskip

As done in \cite{BFP2}, given $[f]\in\cU$, let us introduce some objects which will be used extensively in what follows. First of all, for $[x]\in \bP^n$ we set
\begin{equation}
\iota([x])=\bP(\ker(H_f(x))).
\end{equation}
This is either empty (exactly when $[x]\not\in \cH_f$) or a projective linear space of dimension $n-\Rank(H_f(x))$. It is then natural to consider the {\it Hessian loci}
\begin{equation}
\cD_k(f)=\{[x]\in\bP^n \ | \ \Rank(H_f(x))\leq k\}
\end{equation}
which give a stratification of the projective space $\bP^n$ and in particular of the Hessian hypersurface $\cH_f$ (for example, we have $\cD_{n+1}(f)=\bP^n$ and $\cD_n(f)=\cH_f$). Moreover, in general, for $k\leq n-1$, $\cD_{k-1}(f)\subseteq\cD_k(f)\subset\cH_f$. We will simply write $\cD_k$, when it is clear which polynomial we are referring to in the following. In \cite{BFP2} the authors actually proved that for every suitable $k$, $\cD_{k-1}(f)\subseteq\Sing(\cD_k(f))$ and that equality holds for $[f]\in \cU$ general.
For any $[f]\in \cU$, let us introduce a useful incidence correspondence:
\begin{equation}
    \Gamma_f=\{([x],[y])\in\bP^n\times\bP^n \ | \ H_f(x)\cdot y=0\}
\end{equation}
and let us denote by $\pr_i$ the two natural projections

\begin{remark}
\label{REM:Tau}
By the relation $H_f(x)\cdot y=H_f(y)\cdot x$ (which is equivalent to the relation $xy=yx$ in $A_f$, by Proposition \ref{PROP:descrapol}), the standard involution
$$\tau:\bP^n\times \bP^n\to \bP^n\times \bP^n\qquad ([x],[y])\mapsto ([y],[x])$$
sends $\Gamma_f$ to itself. As a consequence, $\Gamma_f$ dominates $\cH_f$ via both $\pr_1$ and $\pr_2$.\\
\end{remark}

By Proposition \ref{PROP:descrapol}, the loci just introduced can be described also in terms of the apolar ring as follows:
$$
\iota([x])=\bP(\ker(x\cdot :A^1\to A^2))\qquad
\Gamma_f=\{([x],[y])\in\bP(A^1)\times\bP(A^1) \ | \ xy=0\}.
$$
Through the article, we will use one description or the other according to the convenience.
\smallskip

We summarise here the main results of \cite{BFP2}:

\begin{theorem}
\label{THM:fundamental}
The following hold:
\begin{enumerate}[(a)]
    \item for {\it any} $[f]\in \cU$, we have $\Sing(\cH_f)=\cD_{n-1}(f)$;
    \item if $[f]\in \cU$ is general, then $\Gamma_f$ is smooth and $\pr_i:\Gamma_f\rightarrow\cH_f$ is a desingularization;
    \item the expected codimension of $\cD_k(f)$ is $\binom{n-k+2}{2}$.
\end{enumerate}
In particular, the expected dimension of $\Sing(\cH_f)$ equals $n-3$.
\end{theorem}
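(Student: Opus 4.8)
\emph{Proof plan.} These assertions record the main results of \cite{BFP2}; I describe the architecture of the argument rather than all the details. Part (c) is the classical codimension count for symmetric determinantal loci: since $V(f)$ is smooth it is not a cone, so the second partials $\partial_j\partial_k f$ span $S^1$ (their span has dimension $\dim A^1 = n+1 = \dim S^1$), and hence $[x]\mapsto[H_f(x)]$ is a linear embedding realising $\bP^n$ as a linear subspace $\Lambda$ of the projective space $\bP^N$, $N=\binom{n+2}{2}-1$, of symmetric $(n+1)\times(n+1)$ matrices up to scalars; under this identification $\cD_k(f)=\Lambda\cap\Sigma_k$, where $\Sigma_k\subset\bP^N$ is the variety of symmetric matrices of rank $\le k$, of codimension $\binom{(n+1)-k+1}{2}=\binom{n-k+2}{2}$. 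For $k=n-1$ this equals $\binom{3}{2}=3$, so — granting (a) — the expected dimension of $\Sing(\cH_f)=\cD_{n-1}(f)$ is $n-3$, which is the last assertion.

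The heart of (a) is a Jacobian computation special to cubics. Jacobi's formula gives $\partial_i h_f=\operatorname{tr}\!\bigl(\operatorname{adj}(H_f)\cdot\partial_i H_f\bigr)$, and since $f$ is a cubic the matrix $\partial_i H_f$ is the \emph{constant} symmetric matrix $(\partial_i\partial_j\partial_k f)_{j,k}$. If $\operatorname{corank}H_f(x)\ge2$ then all $n\times n$ minors of $H_f(x)$ vanish, so $\operatorname{adj}(H_f(x))=0$ and $\nabla h_f(x)=0$. If instead $\operatorname{corank}H_f(x)=1$, with $\langle v\rangle=\ker H_f(x)$, then $\operatorname{adj}(H_f(x))=\lambda\,vv^{T}$ with $\lambda\neq0$, whence
\[
\partial_i h_f(x)=\lambda\,v^{T}\!\bigl(\partial_i H_f(x)\bigr)v=\lambda\sum_{j,k}(\partial_i\partial_j\partial_k f)\,v_jv_k=\lambda\,\partial_i\!\bigl(\partial_v^{\,2}f\bigr),
\]
so $\nabla h_f(x)$ is, up to the nonzero scalar $\lambda$, the coefficient vector of the linear form $\partial_v^{\,2}f\in S^1$. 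By Proposition \ref{PROP:descrapol}, $\partial_v^{\,2}f=0$ is equivalent to $v^2=0$ in $A_f$, i.e.\ to $[v]\in\Sing(V(f))=\varnothing$; hence $\nabla h_f(x)\neq0$. Summarising, for every smooth $f$ one has $\{\,[x]\ :\ h_f(x)=0,\ \nabla h_f(x)=0\,\}=\cD_{n-1}(f)$ exactly.

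Now $\Sing(\cH_f)=\{h_f=0,\ \nabla h_f=0\}$ precisely when $h_f$ is reduced: if $h_f$ had a repeated (prime) factor $p$, then $\nabla h_f$ would vanish on the divisor $V(p)\subseteq\cH_f$, so $V(p)\subseteq\cD_{n-1}(f)$ by the previous step, and a general point of $V(p)$ would lie in $\cD_{n-1}(f)$ while being a smooth point of $\cH_f$. Thus statement (a) is, for every smooth $f$, equivalent to the reducedness of $h_f$ — equivalently to $\dim\cD_{n-1}(f)\le n-2$. This uniform estimate (valid for \emph{every} smooth $f$, not merely the general one) is the real content of (a): this is where the smoothness hypothesis and the bound on $n$ genuinely enter. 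It is classical for $n\le3$ (Hesse; B.\ Segre) and is the substance of \cite{BFP2} in general, proved through the determinantal structure of $\Gamma_f$ together with the fixed-point-free involution $\tau$ — morally, an excess component of $\cD_{n-1}(f)$, hence of $\Sing(\Gamma_f)$, yields enough ``triangles'' on $\cH_f$ to force $f$ to separate variables, contradicting smoothness. I expect this to be the main obstacle; the remaining steps are formal.

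For (b), introduce the universal incidence variety $\cG=\{\,([f],[x],[y])\in\bP(S^3)\times\bP^n\times\bP^n\ :\ H_f(x)\cdot y=0\,\}$. For fixed $([x],[y])$ with $x,y\neq0$ the linear map $S^3\to\bK^{n+1}$, $f\mapsto H_f(x)\cdot y$, is surjective (a short check after reducing to $[x]=[e_0]$ and $[y]\in\{[e_0],[e_1]\}$), so the fibres of $\cG\to\bP^n\times\bP^n$ are projective spaces of constant dimension: $\cG$ is a projective bundle over $\bP^n\times\bP^n$, in particular smooth and irreducible. The projection $\cG|_{\cU}\to\cU$ is dominant, since $\Gamma_f\neq\varnothing$ (because $\cH_f\neq\varnothing$), so generic smoothness in characteristic $0$ gives that $\Gamma_f$ is smooth, of the expected dimension $n-1$, for general $[f]\in\cU$. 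Finally, over the open set $\cH_f\setminus\cD_{n-1}(f)$ — dense for general $f$, since then $\dim\cD_{n-1}(f)=n-3$ by the transversality underlying (c) — the kernel line $[x]\mapsto\bP(\ker H_f(x))$ is computed by the adjugate, hence is a morphism; its graph lies in $\Gamma_f$, has dimension $n-1$, and $\pr_1^{-1}(\cD_{n-1}(f))$ has dimension $\le n-2$, so $\Gamma_f$ coincides with that graph and $\pr_1$ (hence $\pr_2$, via $\tau$) is an isomorphism over a dense open of $\cH_f$. A proper birational morphism from the smooth variety $\Gamma_f$ is a desingularization of $\cH_f$, which proves (b).
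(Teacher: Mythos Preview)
The paper does not prove Theorem~\ref{THM:fundamental}; it is quoted from \cite{BFP2}. Your adjugate computation for (a) and the determinantal codimension count for (c) are correct and complete as far as they go. The substantive problem is the paragraph in which you locate ``the real content'' of (a): there you have conflated Theorem~\ref{THM:fundamental} with Theorem~A (Theorem~\ref{THM:star}), and this leads to three concrete errors.

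First, there is \emph{no bound on $n$} in Theorem~\ref{THM:fundamental}: statement (a) holds for every smooth cubic in every $\bP^n$. The restriction $n\le 5$ belongs to Theorem~A, which is a much finer statement. Second, the estimate $\dim\cD_{n-1}(f)\le n-2$ is \emph{not} obtained via triangles; it is the elementary diagonal argument recorded immediately afterwards as Proposition~\ref{PROP:Comp_Of_Gamma}(b): $\Gamma_f$ is cut by $n+1$ $(1,1)$-divisors in $\bP^n\times\bP^n$ and is disjoint from $\Delta_{\bP^n}$ (smoothness of $V(f)$), so every component of $\Gamma_f$ has dimension exactly $n-1$, and a stratum $\cD_k(f)$ of dimension $\ge k$ would force a component of $\Gamma_f$ of dimension $\ge n$, meeting $\Delta_{\bP^n}$ by Remark~\ref{REM:BIGNESSofDELTA}. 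No triangles, no restriction on $n$. Third, ``force $f$ to separate variables, contradicting smoothness'' is false: Thom--Sebastiani cubics can be smooth (the Fermat cubic is the extreme case). The triangle machinery in this paper is used to show that an excess component of $\cD_{n-1}(f)$ forces $f$ to be of TS type --- that is Theorem~A, and it does \emph{not} contradict smoothness.

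In short, your Jacobi computation already gives $\{h_f=0,\ \nabla h_f=0\}=\cD_{n-1}(f)$ for every smooth cubic, and with the scheme convention $\cH_f=V(h_f)$ this \emph{is} (a); if one also wants $h_f$ reduced (equivalently $\dim\cD_{n-1}(f)\le n-2$), that follows from the diagonal argument above. Your sketches for (b) and (c) are fine, though in (b) the step ``$\dim\cD_{n-1}(f)=n-3$ for general $f$'' deserves a word: it is the standard Bertini-type transversality of a general linear section $\Lambda\simeq\bP^n$ with $\Sigma_{n-1}$, or equivalently a parameter count on the universal family you set up.
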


Hence, by Theorem \ref{THM:fundamental}, the Hessian variety associated with any smooth cubic hypersurface in $\bP^n$ for $n\geq3$ is always singular and, in the general case, $\Gamma_f$ is a desingularization for it and we have a lower bound for the dimension of $\Sing(\cH_f)$ for {\it any} $[f]\in \cU$. We are now interested in giving an upper bound for this dimension and more generally for $\dim(\cD_k(f))$.

\begin{remark}
\label{REM:BIGNESSofDELTA}
It is well-known that the diagonal $\Delta_{\bP^n}\subseteq \bP^n\times \bP^n$ has decomposition in the Chow group of $\bP^n\times \bP^n$ given by $$[\Delta_{\bP^n}]=\bigoplus_{p+q=n}pr_1^*[H]^p\cdot pr_2^*[H]^q,$$ 
where $H$ is a hyperplane in $\bP^n$ and $\pi_i$ are the standard projections. Hence, every effective cycle of dimension at least $n$ intersects $\Delta_{\bP^n}$. 
\end{remark}

\begin{proposition}
\label{PROP:Comp_Of_Gamma}
Consider {\it any} $[f]\in \cU$. Then the following hold:
\begin{enumerate}[(a)]
    \item the variety $\Gamma_f$ is a connected complete intersection in $\bP^n\times \bP^n$ of pure dimension $n-1$;
    \item for each $k$, one has $\dim(\cD_k(f))\leq k-1$;
    \item there is a bijective correspondence between irreducible components of $\Gamma_f$ and the irreducible components of the various loci $\cD_{k}(f)$ for which the bound in $(b)$ is sharp.

\end{enumerate}
\end{proposition}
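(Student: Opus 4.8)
The plan is to prove the three parts in the order (b), (a), (c), since (a) uses the dimension bound of (b) and (c) uses both. For (b) I would argue by contradiction: suppose $\dim\cD_k(f)\ge k$ for some $k$ and pick an irreducible component $C$ of $\cD_k(f)$ with $\dim C\ge k$. Over a general $[x]\in C$ the fibre of $\pr_1\colon\Gamma_f\to\bP^n$ is $\iota([x])=\bP(\ker H_f(x))$, of dimension $n-\Rank(H_f(x))\ge n-k$, so $\{([x],[y])\in\Gamma_f\,:\,[x]\in C\}$ is a closed subset of $\bP^n\times\bP^n$ of dimension $\ge\dim C+(n-k)\ge n$. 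By Remark \ref{REM:BIGNESSofDELTA} this effective cycle meets the diagonal, so there is $[x]$ with $H_f(x)\cdot x=0$; by Proposition \ref{PROP:descrapol} this says $x^2=0$ in $A_f$, i.e. $[x]\in\Sing(X)$, contradicting the smoothness of $X=V(f)$. I expect this to be the real obstacle: everything downstream is bookkeeping with fibre dimensions, whereas this inequality genuinely uses both the bigness of the diagonal and the smoothness hypothesis.

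Turning to (a): the vector equation $H_f(x)\cdot y=0$ is a system of $n+1$ bihomogeneous forms of bidegree $(1,1)$ on the smooth $2n$-fold $\bP^n\times\bP^n$, so Krull's Hauptidealsatz gives $\dim Z\ge 2n-(n+1)=n-1$ for every irreducible component $Z$ of $\Gamma_f$. For the reverse bound I would stratify $\Gamma_f$ by $k=\Rank(H_f(x))$: the $k$-th stratum maps onto $\cD_k(f)\setminus\cD_{k-1}(f)$ with fibres $\bP^{n-k}$, hence has dimension $\le\dim\cD_k(f)+(n-k)\le(k-1)+(n-k)=n-1$ by (b). Thus $\Gamma_f$ has pure dimension $n-1=2n-(n+1)$ and is a complete intersection. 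Connectedness then follows because $\pr_1\colon\Gamma_f\to\cH_f$ is proper and surjective (for $[x]\in\cH_f$ one has $\ker H_f(x)\ne0$) with connected fibres $\{[x]\}\times\bP(\ker H_f(x))$, while $\cH_f$, a hypersurface in $\bP^n$ with $n\ge2$, is connected; a proper surjection with connected fibres onto a connected base has connected total space. (Alternatively, a complete intersection of ample divisors of dimension $\ge1$ in a connected projective variety is connected.)

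Finally, for (c): to a component $Z$ of $\Gamma_f$ I would attach $W:=\pr_1(Z)\subseteq\cH_f$, and let $k$ be the rank of $H_f$ at the general point of $W$, so $W\subseteq\cD_k(f)$ and $\dim W\le k-1$ by (b). Since $\dim Z=\dim W+(\text{general fibre dimension})\le\dim W+(n-k)$ and $\dim Z=n-1$ by (a), both inequalities are equalities, so $\dim W=k-1$ (hence $W$ is a component of $\cD_k(f)$ realising the bound of (b)) and the general fibre of $Z\to W$ is all of $\iota([x])\cong\bP^{n-k}$. Conversely, for such a component $W\subseteq\cD_k(f)$ one has $W\not\subseteq\cD_{k-1}(f)$ (else (b) for $k-1$ fails), so $H_f$ has constant rank $k$ on a dense open $W^\circ\subseteq W$, and the closure $Z_W$ of the $\bP^{n-k}$-bundle $\bP(\ker H_f|_{W^\circ})$ is an irreducible subvariety of $\Gamma_f$ of dimension $(k-1)+(n-k)=n-1$, hence a component of $\Gamma_f$ by the purity from (a). These assignments are mutually inverse: $\pr_1(Z_W)=W$ by construction, and if $W=\pr_1(Z)$ the fibre computation above gives $Z\supseteq\bP(\ker H_f|_{W^\circ})$, whence $Z=Z_W$ by irreducibility and equality of dimensions. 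One small point to keep in mind throughout (c) is that "general rank" is well defined on each component and $W^\circ$ is dense in $W$, which follows from upper semicontinuity of the corank together with (b).
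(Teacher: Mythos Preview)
Your proof is correct and follows essentially the same strategy as the paper: the diagonal/smoothness argument for the dimension bounds, the complete-intersection count for the lower bound, and the fibre-dimension bookkeeping for the bijection in (c). The only substantive difference is connectedness in (a): the paper invokes a Fulton--Hansen-type theorem for complete intersections of ample divisors, whereas you use the more elementary observation that $\pr_1\colon\Gamma_f\to\cH_f$ is a proper surjection with connected fibres onto the connected hypersurface $\cH_f$. Your argument is self-contained and avoids an external reference; the paper's has the mild advantage of not needing to know that $\cH_f$ is connected (which is of course immediate for a hypersurface in $\bP^n$ with $n\geq 2$). Your treatment of (c) is also somewhat more explicit than the paper's in checking that the two assignments are mutually inverse.
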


\begin{proof}
For $(a)$, first of all observe that $\Gamma_f\cap \Delta_{\bP^n}=\emptyset$ since, otherwise, $V(f)$ would be singular by Proposition \ref{PROP:descrapol}. Hence, by Remark \ref{REM:BIGNESSofDELTA}, we have that $\Gamma_f$ has dimension at most $n-1$. On the other hand, by definition, we have that $\Gamma_f$ is cut by $n+1$ divisors of $\bP^n\times \bP^n$ of bidegree $(1,1)$ so each component of $\Gamma_f$ has dimension at least $n-1$. \\
Since $\Gamma_f$ is a complete intersection, its connectedness follows by the Fulton-Hansen-type theorem (see \cite{FH}, \cite[Ch.3]{Laz} or \cite{MNP}).

\smallskip

For $(b)$, let us assume by contradiction that there exists an irreducible component $W$ of $\cD_k(f)$ of dimension $d\geq k$. Over the general point $[w]$ of $W$, the fiber of the projection $\pr_1$ from $\Gamma_f$ is a projective space $\iota([x])\simeq \bP^s$ with $s\geq n-k$. Therefore there exists a component of $\Gamma_f$ of dimension at least $d+s\geq n$. This would mean, by Remark \ref{REM:BIGNESSofDELTA}, that $\Gamma_f\cap\Delta_{\bP^n}$ is not empty, giving a contradiction. 
\medskip

For $(c)$, assume that $W$ is as in $(b)$ and of dimension $k-1$. Then the same reasoning as above implies the existence of an irreducible component of $\Gamma_f$ dominating $W$. For the converse, let $G$ be an irreducible component of $\Gamma_f$, set $G'=\pr_1(G)$ and let $m$ be the dimension of $G'$.  If $m=n-1$, then $G'$ is a component of $\cH_f=\cD_n(f)$ so we are done. Otherwise, $m=n-1-a$ with $a>0$ so that $\pr_1|_G$ has the general fiber $F$ of dimension $a$. Since the whole fiber of $\pr_1$ over a general point of $[x]\in G'$ is a projective space containing a fiber of dimension $a$, we have that $\Rank(H_f(x))\leq n-a$, i.e. the general point of $G'$ lies in $\cD_{n-a}(f)$, as claimed.
\end{proof}

As a consequence of Proposition \ref{PROP:Comp_Of_Gamma}, recalling that $\Sing(\cH_f)=\cD_{n-1}(f)$, we have in particular that 
$$\dim(\Sing(\cH_f))\in\{n-3,n-2\}$$
and for $f$ general such dimension coincides with the expected one, i.e. $\dim(\Sing(\cH_f))=n-3$. In this paper we are interested in answering the following:
\begin{question}
\label{QST:question}
    For which $[f]\in\cU$ does $\Sing(\cH_f)$ have dimension $n-2$?
\end{question}

In this study, the singularities of $\Gamma_f$ will play a central role.

\begin{definition}
Given $f\in S^3$, we set 
$$\cT=\{([x],[y],[z])\in (\bP^n)^3\,|\, [x]\in\iota(y), \ [y]\in\iota(z) \mbox{ and } [z]\in\iota(x)\}.$$
Elements in $\cT$ are called {\it triangles for $\cH_f$}.
\end{definition}

\begin{remark}
\label{REM:CoseScemeSuiTriangoli}
Recall that if $f$ is a cubic polynomial, then the incidence variety $\Gamma_f$ is symmetric with respect to the involution $\tau$. This implies that $$[x]\in\iota(y) \Longleftrightarrow [y]\in\iota(x)$$
for all $[x],[y]\in \bP^n$ so a permutation of the {\it vertices} of a triangle yields again a triangle. Furthermore, if $[f]\in \cU$, two vertices of the same triangle cannot be equal, since we have $\Delta_{\bP^n}\cap \Gamma_f=\emptyset$, and so by construction, each vertex of a triangle lies necessarily in $\cD_{n-1}(f)$. 
\end{remark}

The following result links triangles for $\cH_f$ and singularities of $\Gamma_f$. It has been proved for $n=4$ in \cite{AR} and in \cite{BFP2} for the general case.
\begin{lemma}
    \label{LEM:Triangles_and_Singularities}
    For $[f]\in\bP(S^3)$, a point $([x],[y])$ is singular for $\Gamma_f$ if and only if there exists a third point $[z]\in\cH_f$ such that the triple $([x],[y],[z])$ is a triangle for $\cH_f$.
\end{lemma}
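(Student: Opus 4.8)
The statement is an "if and only if" about when a point $([x],[y])$ of $\Gamma_f$ is singular, and the natural route is to compute the Zariski tangent space of $\Gamma_f$ at $([x],[y])$ directly from the defining equations. Recall $\Gamma_f = V(g_0,\dots,g_n) \subset \bP^n\times\bP^n$ where $g_i(x,y) = \sum_j (y_iy_j)(f)\, x_j$ is the $i$-th component of $H_f(x)\cdot y$, a bihomogeneous form of bidegree $(1,1)$. First I would write the tangent space at $P = ([x],[y])$ as the kernel of the differential of $(g_0,\dots,g_n)$: a tangent vector is a pair $([\dot x],[\dot y])$ (representatives of tangent vectors to the two $\bP^n$ factors) satisfying $H_f(\dot x)\cdot y + H_f(x)\cdot \dot y = 0$, using that $H_f$ is linear in its argument and symmetric. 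Since $\Gamma_f$ has pure dimension $n-1$ (Proposition \ref{PROP:Comp_Of_Gamma}(a)) inside the $2n$-dimensional $\bP^n\times\bP^n$, the point $P$ is singular exactly when this system of $n+1$ linear equations in the $2n$ unknowns (after quotienting by the Euler relations, which account for the ambient $\bP^n\times\bP^n$ rather than affine space) has solution space of dimension $> n-1$, i.e. the differential drops rank.

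Next I would translate this condition into the apolar ring, which is where the "triangle" structure becomes visible. Writing everything in $A_f$ via Proposition \ref{PROP:descrapol}, the equation $H_f(x)\cdot y = 0$ becomes $xy = 0$ in $A^2$, and the linearized condition becomes: $([\dot x],[\dot y])$ is tangent to $\Gamma_f$ at $([x],[y])$ iff $\dot x\, y + x\, \dot y = 0$ in $A^2$. Now $([x],[y])$ lies on $\Gamma_f$ always contributes the "trivial" tangent directions coming from moving along the two rulings: any $[\dot x]$ with $\dot x\, y = 0$ (i.e. $[\dot x]\in\iota(y) = \bP(\ker(\cdot y))$) paired with $[\dot y] = 0$, and symmetrically. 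The point is singular precisely when there is a tangent vector \emph{not} of this block-diagonal form — equivalently, when the map $\ker(\cdot y) \oplus \ker(\cdot x) \to \{(\dot x,\dot y) : \dot x y = -x\dot y\}$ fails to be surjective, which by a dimension count (using $\dim\ker(\cdot x) + \dim\ker(\cdot y) \geq \dim(\text{space of }(\dot x,\dot y)) - \dim(\text{image of }\cdot y : A^1\to A^2) + \dots$, more cleanly: the obstruction is an element of $A^2$) happens iff there exists $[z]$ with $z\, y$ and $z\, x$ in appropriate spans. Concretely: a non-trivial tangent vector $(\dot x, \dot y)$ gives $\dot x\, y = -x\,\dot y =: w \in A^2$; if $w = 0$ we are in the trivial case, so $w\neq 0$, and $w$ lies in both $\Ima(\cdot y)$ and $\Ima(\cdot x)$. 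Using Gorenstein duality one identifies $w$ with (the class of) a linear form, and $w \in \Ima(\cdot y) \cap \Ima(\cdot x)$ should be shown equivalent to the existence of $[z]$ with $xz = 0$ and $yz = 0$ — that is, $[z] \in \iota(x)\cap\iota(y)$, and since $[x]\in\iota(y)$ by hypothesis this is exactly the statement that $([x],[y],[z])$ is a triangle.

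For the reverse implication, given a triangle $([x],[y],[z])$, I would produce an explicit non-trivial tangent vector to $\Gamma_f$ at $([x],[y])$: the conditions $xy=0$, $yz=0$, $xz=0$ let one build a perturbation in the direction of $[z]$ — e.g., check that $(\dot x,\dot y)$ with suitable combinations involving $z$ satisfies $\dot x\, y + x\,\dot y = 0$ without being block-diagonal (one must verify $[z]\neq[x],[y]$, which holds since $\Gamma_f\cap\Delta = \emptyset$, as in Remark \ref{REM:CoseScemeSuiTriangoli}, so the direction genuinely moves off the trivial locus). The main obstacle I anticipate is the bookkeeping in the forward direction: correctly identifying the tangent space modulo the Euler/scaling relations on both $\bP^n$ factors, and cleanly showing that the "extra" tangent dimension forces $w \in \Ima(\cdot x)\cap\Ima(\cdot y)$ to come from an honest point $[z]$ rather than just an element of $A^2$ — this last step is where Gorenstein duality of $A_f$ (the perfect pairing $A^1\times A^2\to A^3$) must be invoked to convert the algebraic containment into the geometric incidence $[z]\in\iota(x)\cap\iota(y)$. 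A cleaner alternative, likely what the authors do, is to observe that $\Gamma_f$ is a complete intersection, so one can use the Jacobian criterion: $P$ is singular iff $\operatorname{rk}\big(\partial g_i/\partial(x,y)\big)_P < n+1$; writing this Jacobian block matrix explicitly in terms of $H_f$ and its derivatives and row/column reducing using $H_f(x)y = 0$ reduces the rank-deficiency to the existence of a common kernel vector, i.e. a $[z]$ with $H_f(x)z = H_f(y)z = 0$, which is the triangle condition.
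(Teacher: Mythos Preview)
The paper does not prove this lemma; it cites \cite{AR} for $n=4$ and \cite{BFP2} for the general case. Your final paragraph --- the Jacobian-criterion approach --- is correct and is essentially the standard argument one finds in those references: since $\Gamma_f$ is a complete intersection of the $n+1$ bidegree-$(1,1)$ forms $g_i$, and since $f$ is cubic so that the second partials are linear, the Jacobian of $(g_0,\dots,g_n)$ at $([x],[y])$ is exactly the block matrix $[\,H_f(y)\mid H_f(x)\,]$. Rank-deficiency of this $(n+1)\times 2(n+1)$ matrix is, by symmetry of $H_f$, equivalent to the existence of a nonzero $z$ with $H_f(y)z=H_f(x)z=0$, i.e.\ a third vertex completing the triangle. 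That is the whole proof.

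Your first approach, by contrast, should be dropped. The assertion that ``$P$ is singular precisely when there is a tangent vector not of block-diagonal form'' is not the right characterization: singularity is governed by the \emph{dimension} of the tangent space, and the block-diagonal subspace $\ker(\cdot\, y)/\langle x\rangle \oplus \ker(\cdot\, x)/\langle y\rangle$ already has variable dimension depending on $\Rank H_f(x)$ and $\Rank H_f(y)$, so the dichotomy you set up does not cleanly detect the jump. The subsequent attempt to produce $[z]$ from an element $w\in\Ima(\cdot\, x)\cap\Ima(\cdot\, y)\subset A^2$ via Gorenstein duality is a detour that never quite closes. One further technicality worth flagging: the lemma is stated for arbitrary $[f]\in\bP(S^3)$, but your argument (correctly) invokes Proposition~\ref{PROP:Comp_Of_Gamma}(a) to know $\Gamma_f$ has pure dimension $n-1$, and that proposition assumes $[f]\in\cU$. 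For degenerate $f$ the equivalence as literally stated can fail at smooth points of higher-dimensional components of $\Gamma_f$; in practice the lemma is only ever applied with $[f]\in\cU$.
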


To conclude this first section, let us present a couple of technical results which will be useful in what follows.

\begin{lemma}
\label{LEM:squareindep}
Assume that $[f]\in \cU$. Consider a triangle $T\in\cT$ and a point $P\in \Gamma_f$. Then
\begin{enumerate}[(a)]
    \item the squares of the coordinates of $P$ are independent;
    \item the vertices of $T$ span a $\bP^2$;
    \item the squares of the vertices of $T$ are independent.
\end{enumerate}
\end{lemma}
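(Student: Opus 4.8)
The plan is to carry out everything inside the apolar ring $A_f$ through the dictionary of Proposition~\ref{PROP:descrapol}. Two facts will be used repeatedly: since $[f]\in\cU$ the cubic $X=V(f)$ is smooth, so $\Sing(X)=\{[w]\in\bP(A^1)\mid w^2=0\}$ is empty and hence $w^2\neq 0$ for every nonzero $w\in A^1$; and a pair $([a],[b])$ lies in $\Gamma_f$ precisely when $ab=0$ in $A^2$, while $\Gamma_f\cap\Delta_{\bP^n}=\emptyset$ (again because $[f]\in\cU$) forces the two entries of such a pair to be linearly independent in $A^1$. I would prove $(a)$ first, then $(b)$, and finally deduce $(c)$ from the first two. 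I do not expect a genuine obstacle: the whole lemma reduces to a handful of one-line manipulations in $A_f$; the only points needing a little care are invoking $\Gamma_f\cap\Delta_{\bP^n}=\emptyset$ to secure the linear independences, and, in $(c)$, using $(a)$ to rule out vanishing coefficients before normalizing.

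\emph{Part $(a)$.} Write $P=([x],[y])$, so $xy=0$ in $A^2$ and $x,y$ are linearly independent in $A^1$; moreover $x^2\neq 0$ and $y^2\neq 0$ by smoothness. Assume for contradiction that $x^2$ and $y^2$ are dependent, say $x^2=c\,y^2$ with $c\neq 0$. For a point $[ax+by]$ of the line through $[x]$ and $[y]$ one computes $(ax+by)^2=a^2x^2+2ab\,xy+b^2y^2=(a^2c+b^2)\,y^2$, using $xy=0$. Taking $a=1$ and $b$ a square root of $-c$ --- available since $\bK$ is algebraically closed, and $b\neq 0$ because $c\neq 0$ --- yields $x+by\neq 0$ with $(x+by)^2=0$, i.e.\ a point of $\Sing(X)$. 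This contradicts $\Sing(X)=\emptyset$, so $x^2$ and $y^2$ are independent.

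\emph{Part $(b)$.} A triangle $T=([x],[y],[z])$ satisfies $xy=yz=zx=0$ in $A^2$ (by the definition of $\iota$ and commutativity in $A_f$), and its three vertices are pairwise distinct since $\Gamma_f\cap\Delta_{\bP^n}=\emptyset$. If $x,y,z$ were linearly dependent then, no two of them being proportional, one vertex, say $z$, could be written as $z=\alpha x+\beta y$ with $\alpha\neq 0$ and $\beta\neq 0$. Multiplying by $x$ gives $0=zx=\alpha x^2+\beta\,xy=\alpha x^2$, hence $x^2=0$, again contradicting smoothness. Therefore $x,y,z$ span a $\bP^2$.

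\emph{Part $(c)$.} Suppose $\alpha x^2+\beta y^2+\gamma z^2=0$ with $(\alpha,\beta,\gamma)\neq(0,0,0)$. Each of the pairs $([x],[y])$, $([y],[z])$, $([z],[x])$ lies in $\Gamma_f$, so by $(a)$ the squares $x^2,y^2,z^2$ are \emph{pairwise} independent; hence all of $\alpha,\beta,\gamma$ are nonzero. Rescaling each vertex (possible since $\bK$ is algebraically closed) we may assume $x^2+y^2+z^2=0$. Then $(x+y+z)^2=x^2+y^2+z^2+2(xy+yz+zx)=0$, so $x+y+z=0$ by smoothness, contradicting the independence of $x,y,z$ proved in $(b)$. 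Hence $x^2,y^2,z^2$ are independent.
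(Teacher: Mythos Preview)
Your proof is correct and follows essentially the same route as the paper: both argue by producing a nonzero element of $A^1$ with square zero, contradicting smoothness via Proposition~\ref{PROP:descrapol}. The only cosmetic difference is in part~$(c)$, where you rescale the vertices to normalize the relation to $x^2+y^2+z^2=0$ before squaring $x+y+z$, while the paper keeps the coefficients explicit and squares $z+\alpha x+\beta y$; both then invoke $(b)$ to ensure the resulting element is nonzero.
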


\begin{proof}
In order to prove the claims, we will use extensively that $\Sing(V(f))$ can be identified with $\{[x]\in \bP(A^1)\,|\, x^2=0\}$ (see Proposition \ref{PROP:descrapol}). More precisely, we will proceed by contradiction by proving that if the conclusion of $(a),(b)$ or $(c)$ are false, then there exists an element whose square is $0$, i.e. a singular point for $f$, which is impossible by assumption.

Let us start by proving $(a)$. For $P=([y_1],[y_2])\in\Gamma_f$, assume, by contradiction, that $y_1^2$ and $y_2^2$ are linearly dependent. Then, there exists $\lambda\in\bK$ such that $y_1^2=-\lambda^2y_2^2$. As $y_1y_2=0$ vanishes, we would have that $(y_1+\lambda y_2)^2=0$, which is impossible.

For $(b)$, first of all, notice that given any pair of vertices of $T=([x],[y],[z])$, they are independent: this follows either by Remark \ref{REM:CoseScemeSuiTriangoli} or by $(a)$. If by contradiction we could write $z=\alpha x+\beta y$ for some $\alpha,\beta\in\bK^*$, then, by multiplying by $x$ we would obtain $0=xz=\alpha x^2$. This would imply $x^2=0$, which is not possible.

For $(c)$, we have then simply to prove that $z^2\not\in\left\langle x^2,y^2\right\rangle$ in $A^2_f$: let us assume again by contradiction that it is the case, i.e. there exist $\alpha$ and $\beta$ in $\bK^*$ such that $z^2=-\alpha^2 x^2-\beta^2 y^2$. In the same way as before, we can consider the square $(z+\alpha x+\beta y)^2$, which is zero since $xy=xz=yz=0$, leading a contradiction.
\end{proof}

Assume that $[f]\in \cU$. If $\cF$ is variety of $(\bP^n)^3\simeq \bP(A^1)^3$ whose points are triangles for $\cH_f$ (i.e. $\cF\subseteq \cT$), we will refer to $\cF$ as a {\it family of triangles (for $\cH_f$)}. Recall that the tangent space to $(\bP^n)^3\simeq \bP(A^1)^3$ at $T=([x_1],[x_2],[x_3])$ is given by
\begin{equation}
T_{(\bP(A^1))^3,T}=\bigoplus_{i=1}^3 A^1/\langle x_i\rangle.
\end{equation}

\begin{lemma}
\label{LEM:TGVec_Triang_First_Order}
Assume that $[f]\in \cU$. Let $T=([x_1],[x_2],[x_3])$ be a triangle for $\cH_f$ and consider a (Zariski) tangent vector $\underline{v}=(x_1',x_2',x_3')\in T_{\cT,T}$. Then one has 
$$x_ix_j'+x_jx_i'=0\qquad  \mbox{ and }\qquad x_i'\in \Ann_{A^1}(x_j^2,x_k^2)/\langle x_i\rangle$$
whenever $\{i,j,k\}=\{1,2,3\}$.
\end{lemma}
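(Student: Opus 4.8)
The plan is to differentiate the defining relations of $\cT$ at the triangle $T$. Recall that, under the identification $\bP^n \simeq \bP(A^1)$, the condition $[x_i] \in \iota(x_j)$ means precisely $x_i x_j = 0$ in $A^2$ (Proposition \ref{PROP:descrapol}(a)). So $\cT$ is carved out (inside $\bP(A^1)^3$, away from the diagonals) by the three equations $x_1 x_2 = 0$, $x_2 x_3 = 0$, $x_3 x_1 = 0$, each living in the vector space $A^2$. A tangent vector to $\cT$ at $T$ is represented by a triple $(x_1', x_2', x_3') \in (A^1)^3$, well-defined modulo the lines $\langle x_i \rangle$, and it must annihilate the first-order variation of each of these three equations. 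The first-order variation of $x_i x_j = 0$ along $(x_i', x_j')$ is simply $x_i x_j' + x_j x_i' = 0$ in $A^2$, by the Leibniz rule (the multiplication in $A_f$ being bilinear). This gives the first batch of relations, for each of the three pairs $\{i,j\} \subset \{1,2,3\}$.

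Next I would extract the membership statement. Fix $\{i,j,k\} = \{1,2,3\}$ and multiply the relation $x_i x_j' + x_j x_i' = 0$ by $x_k \in A^1$, landing in $A^3$. Since $T$ is a triangle we have $x_i x_k = 0$ and $x_j x_k = 0$; hence $x_k \cdot (x_i x_j') = (x_k x_i) x_j' = 0$ and similarly $x_k \cdot (x_j x_i') = (x_k x_j) x_i' = 0$ — wait, that is not quite what I want. Let me instead multiply the relation $x_j x_i' + x_i x_j' = 0$ coming from the pair $\{i,j\}$ by $x_j$: this gives $x_j^2 x_i' + x_i x_j x_j' = 0$, and since $x_i x_j = 0$ the second term vanishes, so $x_j^2 x_i' = 0$ in $A^3$, i.e. $x_i' \in \Ann_{A^1}(x_j^2)$. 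Symmetrically, using the pair $\{i,k\}$ and multiplying by $x_k$ gives $x_k^2 x_i' = 0$, so $x_i' \in \Ann_{A^1}(x_k^2)$. Combining, $x_i' \in \Ann_{A^1}(x_j^2, x_k^2)$, which modulo $\langle x_i \rangle$ is exactly the claimed statement (and note $x_i \in \Ann_{A^1}(x_j^2, x_k^2)$ already, since $x_i x_j = x_i x_k = 0$ forces $x_i x_j^2 = x_i x_k^2 = 0$, so passing to the quotient by $\langle x_i \rangle$ is legitimate).

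The only subtlety — really a bookkeeping point rather than a genuine obstacle — is the well-definedness modulo $\langle x_i \rangle$: one must check that replacing $x_i'$ by $x_i' + \lambda x_i$ changes neither equation, which is immediate since $x_i x_j = 0$ kills the ambiguity in $x_i x_j' + x_j x_i'$, and $x_j^2 x_i = 0$ kills it in $x_j^2 x_i' = 0$. I expect no real difficulty here; the lemma is essentially just "differentiate the incidence equations and then multiply by the opposite vertex", and the triangle condition is exactly what makes the cross-terms drop out. One could also phrase the whole thing without the apolar ring, in terms of $H_f(x_i)\cdot x_j' + H_f(x_j)\cdot x_i' = 0$ and then pairing with $x_k$, using the symmetry $H_f(x)\cdot y = H_f(y)\cdot x$; the computation is identical.
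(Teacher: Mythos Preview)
Your proof is correct and follows essentially the same approach as the paper: describe $\cT$ by the equations $x_ix_j=0$ in $A^2$, differentiate at first order to obtain $x_ix_j'+x_jx_i'=0$, then multiply by $x_j$ (using $x_ix_j=0$) to extract $x_j^2x_i'=0$. The paper's version is terser and omits the well-definedness check modulo $\langle x_i\rangle$ that you include, but the argument is the same.
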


\begin{proof}
The variety $\cT$ can be described in $\bP(A^1)^3$ as
$$\{([x],[y],[z])\in (\bP(A^1))^3\,|\, xy=xz=yz=0\}$$
so that the forms $xy,xz$ and $yz$ vanish identically on $\cT$. If $\underline{v}=(x_1',x_2',x_3')\in T_{\cT,T}$ is a tangent vector, then $(x_1+tx_1',x_2+tx_2',x_2+tx_3')$ satisfies the equations $xy=xz=yz=0$ at first order:
$$0=(x_i+tx_i')(x_j+tx_j') \mod t^2=x_ix_j+t(x_ix_j'+x_jx_i') \mod t^2$$
so $x_ix_j'+x_jx_i'=0$ as claimed. Multiplying by $x_j$ we get $x_j^2x_i'=0$, i.e. we have
$x_i'\in\mathrm{Ann}_{A^1}(x_j^2,x_k^2)/\left\langle x_i\right\rangle$.
\end{proof}

As a consequence of Lemma \ref{LEM:squareindep} we can then define a morphism
$$\psi:\cT\rightarrow \mathrm{Gr}(2,\bP^n) \qquad T=([x],[y],[z])\mapsto \bP(\left\langle x,y,z\right\rangle).$$

\begin{proposition}
\label{PROP:injtriangles}
The morphism $\psi$ has everywhere injective differential and it is injective modulo permutations of the vertices.
\end{proposition}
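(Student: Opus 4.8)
The plan is to reduce both assertions to the linear-algebraic input already packaged in Lemma~\ref{LEM:squareindep}: the vertices of a triangle are independent and so are their squares (we keep the standing assumption $[f]\in\cU$, under which $\psi$ is defined, and we use the identification $\Gr(2,\bP^n)=\Gr(3,A^1)$). First I would treat injectivity modulo permutations. Suppose $T=([x],[y],[z])$ and $T'=([x'],[y'],[z'])$ are triangles for $\cH_f$ with $\psi(T)=\psi(T')$, and call this $3$-dimensional subspace $W\subseteq A^1$. By Lemma~\ref{LEM:squareindep}(b), both $(x,y,z)$ and $(x',y',z')$ are bases of $W$, and by part (c) the elements $x^2,y^2,z^2$ are linearly independent in $A^2$. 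Write $x'=a_1x+a_2y+a_3z$ and $y'=b_1x+b_2y+b_3z$. Since $xy=xz=yz=0$, the product collapses to $x'y'=a_1b_1x^2+a_2b_2y^2+a_3b_3z^2$, and the triangle relation $x'y'=0$ (via Proposition~\ref{PROP:descrapol}) together with the independence of the squares forces $a_ib_i=0$ for each $i$; running this over all three pairs of vertices of $T'$ shows that the coordinate vectors of $x',y',z'$ in the basis $(x,y,z)$ have pairwise disjoint supports. Three nonzero vectors of $\bK^3$ with pairwise disjoint supports must be scalar multiples of the three standard basis vectors in some order, so $\{[x'],[y'],[z']\}=\{[x],[y],[z]\}$; since the vertices of a triangle are distinct (Remark~\ref{REM:CoseScemeSuiTriangoli}), this says exactly that $T'$ is a permutation of $T$.

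Next I would check that $d\psi_T$ is injective at every $T=([x_1],[x_2],[x_3])\in\cT$. Identify $T_{\Gr(2,\bP^n),W}=\Hom(W,A^1/W)$. Along a curve $[x_i+tx_i']$ the moving subspace $\langle x_1+tx_1',x_2+tx_2',x_3+tx_3'\rangle$ has first-order variation the homomorphism $x_i\mapsto x_i'\bmod W$ (a standard chart computation on the Grassmannian, using that $(x_1,x_2,x_3)$ stays a basis of the moving plane for small $t$; note also that the ambiguity of $x_i'$ modulo $\langle x_i\rangle$ disappears after reduction mod $W$). Hence, for $\underline{v}=(x_1',x_2',x_3')\in T_{\cT,T}$, one has $d\psi_T(\underline{v})=0$ precisely when $x_i'\in W$ for all $i$. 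Writing $x_i'=\sum_j a_{ij}x_j$ and using $x_kx_l=0$ for $k\neq l$, Lemma~\ref{LEM:TGVec_Triang_First_Order} gives $0=x_ix_j'+x_jx_i'=a_{ji}x_i^2+a_{ij}x_j^2$ for $i\neq j$; the independence of $x_1^2,x_2^2,x_3^2$ then yields $a_{ij}=a_{ji}=0$, so each $x_i'\in\langle x_i\rangle$ and $\underline{v}=0$ in $\bigoplus_i A^1/\langle x_i\rangle$. Therefore $d\psi_T$ is injective.

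I expect no serious obstacle here: all the geometric substance is concentrated in Lemma~\ref{LEM:squareindep} (independence of the vertices and of their squares) and in the first-order description of $T_{\cT,T}$ from Lemma~\ref{LEM:TGVec_Triang_First_Order}. The only points that require a little care are the chart computation identifying $d\psi_T$ with the map $x_i\mapsto x_i'\bmod W$, and the elementary pigeonhole argument on supports in $\bK^3$; both are routine.
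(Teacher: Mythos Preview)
Your proof is correct and follows essentially the same approach as the paper: both parts reduce to the independence of $x^2,y^2,z^2$ from Lemma~\ref{LEM:squareindep}, the first via the computation $x'y'=\sum a_ib_i x_i^2$ and a disjoint-support argument, the second via Lemma~\ref{LEM:TGVec_Triang_First_Order} applied to $x_i'=\sum_j a_{ij}x_j$ to force $a_{ij}=0$ for $i\neq j$. Your write-up is slightly more explicit about the identification $T_{\Gr(2,\bP^n),W}\simeq\Hom(W,A^1/W)$ and the description of $d\psi_T$, but the substance is identical.
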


\begin{proof}
Let us start by proving that the map $\psi$ is injective (up to the permutation of such vertices). Let $T=([x_1],[x_2],[x_3])$ and $T'=([y_1],[y_2],[y_3])$ be two triangles which are not equivalent via permutation of the vertices. Assume, by contradiction, that $\psi(T)=\psi(T')$, i.e. the two triples of vertices span the same projective plane. We can then write $y_i=\sum_{j=1}^3a_{ij}x_j$. Recall that $x_kx_l=\delta_{kl}x_k^2$, since $x_kx_l=0$ for every $k\neq l$, while $x_i^2\neq 0, y_i^2\neq 0$ for every $i$, by the smoothness of $V(f)$. Hence, for $i\neq j$, we get
$$0=y_iy_j=\sum_ka_{ik}x_k\cdot\sum_la_{jl}x_l=\sum_{k,l}a_{ik}a_{jl}x_kx_l=a_{i1}a_{j1}x_1^2+a_{i2}a_{j2}x_2^2+a_{i3}a_{j3}x_3^2.$$
   
From Lemma \ref{LEM:squareindep}, we have then that $a_{ik}a_{jk}=0$ for every $k=1,2,3$ and $i\neq j$. From this one can easily see that, for every $i=1,2,3$, at least (and at most, by construction) two coefficients among $a_{i1},a_{i2},a_{i3}$ are trivial. Hence, the vertices of $T'$ and of $T$ are the same up to a permutation. 
\smallskip

Fix a triangle $T=([x_1],[x_2],[x_3])$ in $\cT$. We claim now that the differential 
$$d_T\psi:T_{\cT,T}\rightarrow T_{\mathrm{Gr}(2,\bP(A^1_f)), \left\langle x_1,x_2,x_3\right\rangle}$$
of $\psi$ at $T$ is injective. Let us consider a non-trivial vector 
$$\underline{v}=(x_1',x_2',x_3')\in T_{\cT,T}\subset T_{\bP(A^1)^3,T}\simeq \bigoplus_{i=1}^3 A^1/\left\langle x_i\right\rangle.$$ 
Since $T_{\Gr(k,V),W}\simeq \Hom(W,V/W)$ we have that 
$(d_T\psi)(\underline{v})\in\Hom\left(\left\langle x_1,x_2,x_3\right\rangle, A^1/\left\langle x_1,x_2,x_3\right\rangle\right)$
and, if we assume that $d_T\psi(\underline{v})\equiv0$, we have $x_i'\in\left\langle x_1,x_2,x_3\right\rangle$ so we can write 
\begin{equation}
\label{EQ:Temp_LinDep}
x_i'=\sum_{m=1}^3a_{im}x_m
\end{equation}
for suitable $a_{im}\in \bK$. By Lemma \ref{LEM:TGVec_Triang_First_Order} we have
$x_ix_j'+x_i'x_j=0$ for $i\neq j$ so, using the relations in Equation \eqref{EQ:Temp_LinDep}, we obtain
$$0=x_i\sum_{m=1}^3a_{jm}x_m+x_j\sum_{m=1}^3a_{im}x_m=a_{ji}x_i^2+a_{ij}x_j^2 \qquad \mbox{ for } i\neq j.$$
By Lemma \ref{LEM:squareindep} squares of vertices of a triangle are independent so we obtain $a_{ij}=0$ for $i\neq j$ and $x'_i=a_{ii}x_i$. This is impossible since $x_i'\in A^1/\langle x_i\rangle$ and we would have $\underline{v}=0$ whereas $\underline{v}$ is assumed to be non-trivial.

\end{proof}


\section{Characterisation of TS Polynomials}
\label{SEC:TS}
In Section \ref{SEC:1} we posed a question about a possible description of cubic forms $[f]\in\cU$ whose Hessian locus has singularities in codimension $1$ (see Question \ref{QST:question}). First of all, let us notice that the locus in $\bP(S^3)$ we are interested in is not empty. Indeed, one can easily exhibit poynomials whose Hessian locus is reducible.

\begin{definition}
\label{DEF:TS}
Given $f\in S^d\setminus\{0\}$, we say that $f$ is a {\it Thom-Sebastiani Polynomial} (TS, for brevity) if 
\begin{equation}
\label{EQ:SVP}
f=f_1(l_0,\dots,l_k)+f_2(l_{k+1},\dots,l_n)
\end{equation}
for suitable $0\leq k\leq n-1$, $\{l_0,\dots,l_n\}$ independent linear forms and $f_1,f_2$ polynomials of degree $d$ in $k+1$ and $n-k$ variables respectively. \\
We will denote by $\cV$ the open set of smooth hypersurfaces which are not of Thom-Sebastiani type.
\end{definition}

Examples of TS polynomials are the ones whose zero locus is a cone. These are all singular, clearly. It is easy to see that, if $f$ is a TS polynomial as in Equation \eqref{EQ:SVP}, $X=V(f)$ is smooth if and only if $V(f_1(l_0,\dots, l_k),l_{k+1},\dots, l_n)$ and $V(f_2(l_{k+1},\dots, l_n),l_0,\dots, l_k)$ are smooth. This is also equivalent to ask that both $V(f_1)\subset \bP^k$ and $V(f_2)\subset \bP^{n-k-1}$ are smooth.
\smallskip

For brevity, if $\{x_0,\dots, x_n\}$ are linear forms in $\bP^n$ and if $f_1$ and $f_2$ are polynomials in $k+1$ and $n-k$ variables, respectively, let us define
$$f_1(\underline{x}):=f_1(x_0,\dots, x_k)\qquad \mbox{ and }\qquad f_2(\underline{x}'):=f_2(x_{k+1},\dots, x_{n}).$$

\begin{remark}
\label{REM:Pignolotti}
Let $f$ be a TS polynomial. Then, we can choose coordinates $\{x_0,\dots, x_n\}$ for $\bP^n$ and write $f=f_1(\underline{x})+f_2(\underline{x}')$ for suitable polynomials in $k+1$ and $n-k$ variables of degree $d$. 
Set $g_1:=f_1(\underline{x})$ and $g_2:=f_2(\underline{x}')$ so that $g_1,g_2\in S^d$ with $g_1$ that depends only on the variables $x_0,\dots, x_k$ and $g_2$ that depends only on the other variables. Then, it is clear that
$$H_f(\underline{x},\underline{x}')=\begin{bmatrix}
H_{f_1}(\underline{x}) & 0 \\
0 & H_{f_2}(\underline{x}')
\end{bmatrix}$$
so $h_f(\underline{x},\underline{x}')=h_{f_1}(\underline{x})h_{f_2}(\underline{x}')$. In particular, the Hessian variety $\cH_f$ associated to a TS polynomial is reducible and it is the union of the two cones $W_1=V(g_1)$ and $W_2=V(g_2)$. Moreover, $\Sing(\cH_f)$ has dimension $n-2$ since it contains the intersection $W_1\cap W_2$.
\end{remark}



When $f=x_0^d+f_2(x_1,\dots,x_{n})$,  (in other words, in Definition \ref{DEF:TS}, we are taking $k=0$) one talks about cyclic polynomials (see also Example \ref{EX:CyclicCubics} at the end of this section). The name comes from the fact that the projection of $X=V(f)$ from the point $P_0=(1:0:\dots:0)$ to $V(x_0)\simeq \bP^{n-1}$ gives a natural structure of cyclic cover of $\bP^{n-1}$ branched along the hypersurface $V(f_2)$. For a smooth cubic $X=V(f)$, being cyclic gives a strong condition both on the associated Hessian locus and on the Jacobian ideal of $f$. Indeed, in \cite{BFGre}, it has been proved that being cyclic is equivalent to having a linear component in the Hessian variety and a point in $\cD_1(f)$. This point corresponds to an element in $J_f$ which is a square of some linear form in $S^1$, i.e. it gives a nilpotent element of order $2$ in the Jacobian ring of $f$.
\smallskip

The main purpose of this section is to give a characterization of these Thom-Sebastiani polynomials in terms of the existence of suitable linear projective spaces in some Hessian loci. In particular, we will prove Theorem $B$:

\begin{theorem}
\label{THM:charactTS}
    A polynomial $f\in \cU$ is of Thom-Sebastiani type of the form $f(x_0,\dots,x_n)=f_1(x_0,\dots,x_k)+f_2(x_{k+1},\dots,x_n)$ if and only if $\cD_{k+1}(f)$ contains a $\bP^{k}$.
\end{theorem}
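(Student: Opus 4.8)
The statement is an equivalence; one direction is essentially Remark \ref{REM:Pignolotti}. Indeed, if $f=f_1(\underline{x})+f_2(\underline{x}')$ is TS of the stated shape, then $H_f$ is block-diagonal, and for a point $[x]$ with $x=(\underline{x},0)$ lying over the cone $W_1=V(g_1)$ we have $H_f(x)=\mathrm{diag}(H_{f_1}(\underline{x}),0)$, whose rank is at most $\mathrm{rank}(H_{f_1})\le k+1$. Thus the linear space $\bP^k=V(x_{k+1},\dots,x_n)\subset \bP^n$ spanned by the first $k+1$ coordinate points lies entirely in $\cD_{k+1}(f)$. So the plan is to concentrate on the converse: assume $\Lambda\cong\bP^k\subseteq\cD_{k+1}(f)$ and deduce that $f$ separates variables as claimed.

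\textbf{The converse.} After a linear change of coordinates I may assume $\Lambda=V(x_{k+1},\dots,x_n)$, so every point $[x]$ with $x=(x_0,\dots,x_k,0,\dots,0)$ satisfies $\mathrm{Rank}(H_f(x))\le k+1$; equivalently, by Proposition \ref{PROP:descrapol}(a), in the apolar ring $A_f$ the multiplication map $x\cdot:A^1\to A^2$ has a kernel of dimension $\ge n-k$ for all $[x]\in\Lambda$. Write $\Lambda=\bP(L)$ with $L=\langle e_0,\dots,e_k\rangle\subseteq A^1$, and let $M=\langle e_{k+1},\dots,e_n\rangle$ be a complement, so $A^1=L\oplus M$. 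For the \emph{generic} $x\in L$ the kernel $\iota([x])$ is a $\bP^{n-k-1}$ (the bound is sharp because, by Prower semicontinuity of rank, the rank is exactly $k+1$ on a dense open subset of $\Lambda$ unless $\Lambda\subseteq\cD_k(f)$, a case I would rule out or handle separately using Proposition \ref{PROP:Comp_Of_Gamma}(b) which forbids $\dim\cD_k(f)\ge k$). The key claim to establish is that $\iota([x])$ is \emph{independent} of the generic $[x]\in\Lambda$ and equals (the projectivization of) a fixed subspace $N\subseteq A^1$ of dimension $n-k$; once this is known, the bilinear multiplication $A^1\times A^1\to A^2$ restricted appropriately will force the block decomposition. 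Concretely: for $x,x'\in L$ generic, $x\cdot(\ker x')=0$ and $x'\cdot(\ker x)=0$; polarizing, $x\cdot v=0$ for all $x\in L$ and all $v$ in a common kernel. The product rule in $A_f$ (commutativity $xy=yx$ coming from $\deg f=3$) should let me show $L\cdot N=0$ in $A^2$, where $N$ is this common $(n-k)$-dimensional subspace; then a dimension count using Gorenstein duality of $A_f$ (the pairing $A^1\times A^2\to A^3$, respectively $A^2\times A^1\to A^3$) pins down $N$ and shows $A^1=L\oplus N$ with $L\cdot N=0$.

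\textbf{From the orthogonality $L\cdot N=0$ to the TS form.} Having $A^1=L\oplus N$ with $L\cdot N=0$ in $A^2$, I translate back to $S$. Choosing the $x_i$ dual to a basis of $A^1$ adapted to $L\oplus N$, the vanishing of the mixed second partials $\partial^2 f/\partial x_i\partial x_j$ for $i\le k<j$ on all of $\bP^n$ says precisely that $f$ has no monomials mixing the two groups of variables, hence $f=f_1(x_0,\dots,x_k)+f_2(x_{k+1},\dots,x_n)$ up to a harmless linear (indeed affine-linear / pure) term which, for a cubic, must itself split — here I would note $f$ has no linear or quadratic part and the ``mixed'' quadratic and linear contributions are absent by the same derivative computation, so no leftover cross terms survive. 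This gives the TS form with exactly the advertised index $k$; smoothness of $V(f)$ then forces $V(f_1)$ and $V(f_2)$ smooth as recorded after Definition \ref{DEF:TS}.

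\textbf{Main obstacle.} The delicate point is the rigidity claim: that $\iota([x])$ does not vary as $[x]$ ranges over (a dense open subset of) $\Lambda\cong\bP^k$, i.e. that one genuinely gets a \emph{common} kernel $N$ rather than a moving family of $(n-k-1)$-planes. This is where I expect to spend the real work — controlling how $\ker(x\cdot:A^1\to A^2)$ moves with $[x]\in\Lambda$, probably by differentiating the incidence relation (in the spirit of Lemma \ref{LEM:TGVec_Triang_First_Order}) or by exploiting $\Gamma_f$: the preimage $\pr_1^{-1}(\Lambda)\subseteq\Gamma_f$ has dimension $\ge k+(n-k-1)=n-1$, which is exactly the dimension of $\Gamma_f$ by Proposition \ref{PROP:Comp_Of_Gamma}(a), so $\pr_1^{-1}(\Lambda)$ is a whole component $G$ of $\Gamma_f$; then $\pr_2(G)$ and the symmetry under $\tau$ should force $\pr_2(G)$ to be a linear space as well and deliver the fixed $N$. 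Pushing this through — and checking the degenerate possibility that $\Lambda$ lies in a deeper stratum $\cD_k(f)$, which would again be handled by the codimension bound in Proposition \ref{PROP:Comp_Of_Gamma}(b) — is the crux of the argument.
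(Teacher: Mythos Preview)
Your overall architecture is right and matches the paper: the forward direction is immediate from the block-diagonal Hessian, and for the converse one must produce a complementary $(n-k)$-dimensional subspace $N\subset A^1$ with $L\cdot N=0$, after which the vanishing of all mixed second partials forces the splitting of $f$. The last step (from $L\cdot N=0$ to the TS form) is fine exactly as you wrote it.

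The gap is in the ``rigidity'' step, and neither of your two suggested attacks closes it. The differentiation approach is circular: computing $d\varphi_{[x]}$ for $\varphi:[x]\mapsto\iota([x])$ shows that $d\varphi_{[x]}(x')=0$ iff $x'\cdot\ker(x\cdot)=0$ for all $x'\in L$, which is precisely the conclusion you want. The $\Gamma_f$ approach does give that $G=\pr_1^{-1}(\Lambda)$ is an irreducible component, and one can check that $Y:=\pr_2(G)$ is a component of some $\cD_{n-s}$ of maximal dimension $n-s-1$ (where $\bP^s$ is the generic fiber of $\pr_2|_G$). But nothing in the symmetry under $\tau$ forces $Y$ to be \emph{linear}; $\tau$ only tells you $\tau(G)$ is again a component, which you already know. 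So you are stuck at exactly the point you flagged.

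The paper resolves this with a Bertini argument you do not have (Proposition~\ref{PROP:linearfactorsinDegLociDelux}). One passes to the linear system of quadrics $|W|=\{\partial_v f:[v]\in\bP(L)\}\subset|J_f^2|$. Since each $[v]\in\bP(L)$ lies in $\cD_{k+1}(f)$, each quadric $V(\partial_v f)$ has a $\bP^{n-k-1}$ in its singular locus. Because $V(f)$ is smooth, $J_f$ is generated by a regular sequence, which pins the base locus $B=\BL(|W|)$ to pure dimension $n-k-1$. Bertini then says the general member of $|W|$ is smooth off $B$, so its singular $\bP^{n-k-1}$ must be a component of $B$; there are only finitely many such linear components, hence one of them, $\bP(N)$, lies in the singular locus of \emph{every} quadric in $|W|$. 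This is exactly $L\cdot N=0$. The smoothness of $V(f)$ then gives $L\cap N=0$ (a nonzero element of the intersection would square to zero), so $A^1=L\oplus N$ and you conclude. The regular-sequence input and the Bertini step are the missing ideas.
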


First of all, if we assume that $\cD_k(f)\neq \cD_{k-1}(f)$, we can define the map
$$\varphi:\cD_k(f)\dashrightarrow \Gr(n-k,\bP^n)\qquad [x]\not\in \cD_{k-1}(f)\mapsto \iota([x])$$
whose indeterminacy locus is $\cD_{k-1}(f)$.

\begin{proposition}
\label{PROP:PhiInjective}
Assume that $\cD_k(f)\neq \cD_{k-1}(f)$. The injectivity of $\varphi$ can only fail on points along a line contained in $\cD_{k}(f)$ and cutting $\cD_{k-1}(f)$.
In particular, if $\cD_{k-1}(f)=\emptyset$ or if $\cD_{k}(f)$ does not contain lines, $\varphi$ is injective.
\end{proposition}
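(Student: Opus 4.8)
The plan is to suppose that $\varphi$ fails to be injective at two distinct points of its domain $\cD_k(f)\setminus\cD_{k-1}(f)$, say $[x]\neq[x']$ with $\varphi([x])=\varphi([x'])$, and to show that the line $\ell$ joining them is contained in $\cD_k(f)$ and meets $\cD_{k-1}(f)$; the ``in particular'' part is then a formal consequence. First I would unwind the hypothesis: by the apolar description of $\iota$ recalled after Proposition \ref{PROP:descrapol}, the equality $\varphi([x])=\varphi([x'])$ means $\ker(x\cdot\colon A^1\to A^2)=\ker(x'\cdot\colon A^1\to A^2)=:W$, a subspace of $A^1$ of dimension $n+1-k$ (the rank of the Hessian being exactly $k$ on the domain of $\varphi$). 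For any $u=sx+tx'$ and any $w\in W$, bilinearity of the multiplication in $A_f$ gives $uw=s(xw)+t(x'w)=0$, so $W\subseteq\ker(u\cdot)$ and $\Rank H_f(u)\leq (n+1)-\dim W=k$; hence $\ell\subseteq\cD_k(f)$.

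For the remaining assertion I would pass to symmetric bilinear forms on $A^1$. Using Proposition \ref{PROP:descrapol}(a) together with the perfect pairing $A^1\times A^2\to A^3\cong\bK$, the Hessian $H_f(u)$ corresponds to the symmetric form $B_u(y,z)=uyz\in A^3$ on $A^1$; one has $\ker B_u=\ker(u\cdot)$ and the linear dependence $B_{sx+tx'}=sB_x+tB_{x'}$. Since $B_x$ and $B_{x'}$ vanish on $W$ and have kernel exactly $W$, they induce nondegenerate symmetric forms $\overline{B}_x,\overline{B}_{x'}$ on the $k$-dimensional quotient $A^1/W$, and $B_{sx+tx'}$ induces $s\overline{B}_x+t\overline{B}_{x'}$. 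The determinant $\det(s\overline{B}_x+t\overline{B}_{x'})$ is homogeneous of degree $k$ in $(s,t)$ and is nonzero at $(s:t)=(1:0)$, hence not identically zero; since $\bK$ is algebraically closed it has a root $(s_0:t_0)$. Then $B_{s_0x+t_0x'}$ is degenerate, with kernel strictly containing $W$, so $\Rank H_f(s_0x+t_0x')<k$ and $[s_0x+t_0x']\in\ell\cap\cD_{k-1}(f)$.

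This shows that any failure of injectivity of $\varphi$ happens only at points lying on a line contained in $\cD_k(f)$ and cutting $\cD_{k-1}(f)$. In particular, if $\cD_{k-1}(f)=\emptyset$, or if $\cD_k(f)$ contains no line, no such line exists and $\varphi$ is injective.

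The only substantive step is the last one — forcing $\ell$ to actually meet $\cD_{k-1}(f)$ rather than merely sitting inside $\cD_k(f)$. This is where algebraic closedness of $\bK$ enters, through the degree-$k$ ``characteristic polynomial'' of the pencil of symmetric forms, and where smoothness of $V(f)$ is needed, through the nondegeneracy of $\overline{B}_x$ and $\overline{B}_{x'}$ (equivalently $\Sing V(f)=\{y\in A^1\mid y^2=0\}=\emptyset$, which also gives $x,x'\notin W$, so that the descended pencil lives on the full quotient $A^1/W$). Everything else is a direct translation of the definitions of $\varphi$, of $\iota$, and of the loci $\cD_k(f)$ into the apolar language of Proposition \ref{PROP:descrapol}.
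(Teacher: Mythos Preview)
Your proof is correct and follows essentially the same approach as the paper: both restrict the pencil $sH_f(x)+tH_f(x')$ to a complement (or quotient) of the common kernel $W$ and use algebraic closedness to find a degenerate member. The paper does this in coordinates (block matrices, with the $k\times k$ blocks $A_1,A_2$), while you do it invariantly via the induced forms $\overline{B}_x,\overline{B}_{x'}$ on $A^1/W$; the determinant argument is identical.

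One small correction to your closing commentary: the nondegeneracy of $\overline{B}_x$ and $\overline{B}_{x'}$ comes directly from the domain assumption $[x],[x']\in\cD_k(f)\setminus\cD_{k-1}(f)$ (i.e.\ $\ker B_x=\ker B_{x'}=W$ exactly), not from smoothness of $V(f)$. Smoothness is not actually used in the argument, and the paper's proof does not invoke it either.
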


\begin{proof}
If $\cD_{k}(f)\setminus\cD_{k-1}(f)$ is a single point, $\varphi$ is clearly injective. Assume then that $z_1,z_2\in \cD_{k}(f)\setminus\cD_{k-1}(f)$ are distinct and that $\varphi([z_1])=\varphi([z_2])$. Then $\iota([z_1])=\iota([z_2])$ so $H_f(z_1)$ and $H_f(z_2)$ have the same kernel. Up to a change of coordinates, we can assume that $\ker(H_f(z_i))=\langle e_0,\dots, e_{n-k}\rangle$, where $\{e_0,\dots,e_n\}$ is the basis corresponding to the basis $\{y_0,\dots,y_n\}$ under the identification $\bP^n\simeq \bP(A^1)$. Hence, there exist two square matrices $A_1$ and $A_2$ of order $k$ with coefficients in $\bK$ and maximal rank such that
$$H_{f}(z_i)=\begin{bmatrix}
0 & 0\\
0 & A_i
\end{bmatrix}.
$$
Being $x\mapsto H_f(x)$ linear, $\bP(\langle z_1,z_2\rangle)\simeq \bP^{1}$ is clearly contained in $\cD_k(f)$.
\medskip

Set $p(\lambda,\mu)$ to be the polynomial $\det(\lambda A_1+\mu A_2)$. Since $\det(A_i)\neq 0$ by assumption, we have that $p$ is homogeneous of degree $k$ and non-trivial. Hence, there exists $[\lambda_0:\mu_0]$ such that $p(\lambda_0,\mu_0)=0$, i.e. $H_f(\lambda_0 z_1+\mu_0 z_2)$ has rank at most $k-1$. Thus $\bP(\langle z_1,z_2\rangle)$ cuts $\cD_{k-1}(f)$, as claimed.
\end{proof}

\begin{proposition}
\label{PROP:linearfactorsinDegLociDelux}
Let $f\in \cU$ and assume that the $(n-k-1)$-plane $\Pi=\bP(V)$ is contained in $\cD_{n-k}(f)$. Then there exists $\bP(U)\simeq \bP^k$ in $\cD_{1+k}(f)$. Moreover, for all $[u]\in \bP(U)$, one has that $\Pi\subseteq \iota([u])$ with equality holding for $[u]$ general. 
\end{proposition}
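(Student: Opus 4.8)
The plan is to work throughout inside $\bP(A^1)$ with $A=A_f$, where $\iota([x])=\bP(\Ann_{A^1}(x))$ by Proposition \ref{PROP:descrapol}, and to produce from $\Pi$ a suitable component of $\Gamma_f$ on which the involution $\tau$ can be exploited. First I would note that $\Pi$ has the expected generic rank: since $\dim\cD_{n-k-1}(f)\le n-k-2$ by Proposition \ref{PROP:Comp_Of_Gamma}(b), the $(n-k-1)$-plane $\Pi$ is not contained in $\cD_{n-k-1}(f)$, so together with $\Pi\subseteq\cD_{n-k}(f)$ the general $[v]\in\Pi$ satisfies $\Rank H_f(v)=n-k$ and $\iota([v])\cong\bP^k$; in particular $\Pi$ is a component of $\cD_{n-k}(f)$. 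Over the dense open $\Pi^\circ\subseteq\Pi$ where the rank is exactly $n-k$ the kernels $\ker H_f(v)$ form a rank-$(k+1)$ subbundle of the trivial bundle, so $G:=\overline{\{([v],[w])\,:\,[v]\in\Pi^\circ,\ w\in\ker H_f(v)\}}$ is irreducible of dimension $(n-k-1)+k=n-1$; since $G\subseteq\Gamma_f$ and $\Gamma_f$ has pure dimension $n-1$ (Proposition \ref{PROP:Comp_Of_Gamma}(a)), $G$ is a component of $\Gamma_f$, with $\pr_1(G)=\Pi$ and with general $\pr_1$-fibre $\iota([v])\cong\bP^k$.

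Next I would bring in the involution. Set $Z:=\pr_2(G)$, irreducible of some dimension $m\ge k$; then $G^\tau:=\tau(G)$ is again a component of $\Gamma_f$, with $\pr_1(G^\tau)=Z$ and $\pr_2(G^\tau)=\Pi$. Running the argument in the proof of Proposition \ref{PROP:Comp_Of_Gamma}(c) on $G^\tau$ gives $Z\subseteq\cD_{m+1}(f)$, and because $\dim\cD_m(f)\le m-1<m$ the general $[z]\in Z$ has $\Rank H_f(z)=m+1$, so $\iota([z])\cong\bP^{\,n-m-1}$ and $Z$ is a component of $\cD_{m+1}(f)$ (the degenerate case $m=n-1$, where $\cH_f$ is reducible, being treated separately). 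Moreover the general $\pr_1$-fibre of $G^\tau$ over $[z]$ has dimension $(n-1)-m=n-m-1$ and lies inside the irreducible linear space $\iota([z])$ of the same dimension, hence coincides with it; being contained in $\pr_2(G^\tau)=\Pi$, this yields $\iota([z])\subseteq\Pi$ for general $[z]\in Z$, and symmetrically $\iota([v])\subseteq Z$ for general $[v]\in\Pi$.

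The crux — and the step I expect to be the main obstacle — is to show that $\iota([v])$ is \emph{constant} on $\Pi^\circ$, equivalently that $U:=\Ann_{A^1}(V)$ has dimension $k+1$ (so that $Z$ equals this common $\bP^k$ and $m=k$). One inequality is immediate: every $u\in U$ satisfies $\Pi\subseteq\iota([u])$, hence $\bP(U)\subseteq\cD_{k+1}(f)$ and therefore $\dim U\le k+1$ by Proposition \ref{PROP:Comp_Of_Gamma}(b); equivalently, via the perfect pairing $A^1\times A^2\to A^3$, $\dim(V\cdot A^1)\ge n-k$. For the reverse inequality I would combine Proposition \ref{PROP:PhiInjective} — the map $[v]\mapsto\iota([v])$ on $\cD_{n-k}(f)$ can fail to be injective only along a line of $\cD_{n-k}(f)$ meeting $\cD_{n-k-1}(f)$ — with the incidence geometry of the previous paragraph and the bounds $\dim\cD_j(f)\le j-1$: if $\iota$ were non-constant we would have $m>k$, and then feeding the component $Z$ of $\cD_{m+1}(f)$ (which has the expected dimension) back into the same construction, together with the fact that the fibres of both projections of $G$ are linear spaces, forces $G$ to be a product of linear spaces; but $\pr_1(G)=\Pi$ then identifies one factor with $\Pi$, giving $m=k$, a contradiction.

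Finally, granting the constancy, $\iota([v])\equiv\bP(U)\cong\bP^k$ on $\Pi^\circ$ with $\dim U=k+1$, $\bP(U)\subseteq\cD_{k+1}(f)$, and $\Pi\subseteq\iota([u])$ for every $[u]\in\bP(U)$ by definition of $U$; moreover $\bP(U)$ is itself a $k$-plane inside $\cD_{k+1}(f)=\cD_{\,n-(n-k-1)}(f)$, so the argument of the first paragraph applied to $\bP(U)$ (with $n-k-1$ in place of $k$) gives $\Rank H_f(u)=k+1$ for general $[u]\in\bP(U)$; hence $\iota([u])$ is a $\bP^{\,n-k-1}$ containing $\Pi$, i.e. $\iota([u])=\Pi$, which completes the proof.
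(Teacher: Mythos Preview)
Your setup in the first two paragraphs is sound: realizing $\Pi$ as $\pr_1$ of a component $G$ of $\Gamma_f$, applying $\tau$, and reading off $Z=\pr_2(G)\subseteq\cD_{m+1}(f)$ with $\iota([z])\subseteq\Pi$ for general $[z]\in Z$ is all correct and is a natural line of attack. The final paragraph is fine too, once the crux is granted.

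The gap is exactly where you flag it. Your argument that $m=k$ (equivalently, that $\iota$ is constant on $\Pi^\circ$) does not go through. You assert that ``the fibres of both projections of $G$ are linear spaces'' forces $G$ to be a product of linear spaces, but this implication is false in general: an irreducible subvariety of $\bP^a\times\bP^b$ can have all $\pr_1$- and $\pr_2$-fibres linear without being a product (think of an incidence variety of flags, or of the total space of a non-trivial projective bundle embedded fibrewise linearly). Nothing you have written rules this out here. Your appeal to Proposition~\ref{PROP:PhiInjective} also points the wrong way: that result constrains when $\varphi$ is \emph{injective}, whereas you need it to be \emph{constant}; knowing that non-injectivity only happens along lines meeting $\cD_{n-k-1}(f)$ does not help you force it to collapse entirely. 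Likewise, ``feeding $Z$ back into the same construction'' requires $Z$ to be a linear space, which you have not established.

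What you are missing is the idea the paper uses: interpret $H_f(v)$ as the symmetric matrix of the quadric $V(\partial_v f)$, so that $\Pi\subseteq\cD_{n-k}(f)$ says every quadric in the $(n-k-1)$-dimensional linear system $|W|=\{\partial_v f\}_{[v]\in\Pi}$ is singular along some $\bP^k$. Because $|W|$ is a subsystem of $|J_f^2|$ and the partials of a smooth cubic form a regular sequence, the base locus $B=\mathrm{Bs}|W|$ has pure dimension $k$. Bertini then forces the $\bP^k$ in the singular locus of the general member to lie in $B$; since $B$ contains only finitely many $\bP^k$'s, some fixed $\bP(U)\simeq\bP^k$ lies in $\Sing V(\partial_v f)$ for \emph{all} $[v]\in\Pi$, i.e.\ $\partial_u\partial_v f=0$ for all $u\in U$, $v\in V$. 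This is precisely $\dim\Ann_{A^1}(V)\ge k+1$, the inequality you could not reach.
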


\begin{proof}
By assumption, one has $\Rank(H_f(v))\leq n-k$ for all $v\in V\setminus\{0\}$. 
One can see (\cite{BFP2}) that the quadric of $\bP^n$ given by the vanishing of the polynomial $\pa{v}(f)$ is represented by the square symmetric matrix $H_f(v)$. Then the singular locus of the quadric $V(\pa{v}(f))$ contains a $\bP^{k}$. By setting $W=\{\pa{v}(f)\}_{v\in V}$, we can then observe that $|W|$ is a linear subsystem of dimension $n-k-1$, since the map $v\mapsto \pa{v}(f)$ is injective as $V(f)$ is smooth (it would have been enough to ask that $V(f)$ is not a cone).
\medskip

Let $J=J_f$ be the Jacobian ideal of $f$. Since $|W|\subset |J^2|$ and $J^2$ is spanned by a regular sequence, we have that $B:=\BL(|W|)$ has pure dimension $k$. Indeed, being $B$ cut by $n-k$ quadrics, we have that $\dim(B)\geq k$. On the other hand, if there were a component of $B$ with dimension at least $k+1$, then, we would be able to complete a basis of $W$ in such a way that $J^2$ is not spanned by a regular sequence.
\medskip

By Bertini's theorem the general element of $|W|$ is smooth away from $B$, which has pure dimension $k$. On the other hand, as observed before, all quadrics of $|W|$ have a $\bP^k$ contained in the singular locus. Therefore, there exists a component of $B$ which is a $\bP^k$. Since $B$ has dimension $k$, it contains at most a finite number of $\bP^k$: there exists a component of $B$ which is a $\bP^k$ contained in the singular locus of all the elements of $|W|$. Let $\bP(U)\simeq \bP^k$ be this linear space.
\medskip

Consider $[u]\in \bP(U)$. Since all the quadrics $V(\pa{v}(f))$ parametrized by $W$ are singular along $\bP(U)$ we have 
$$\pa{u}\pa{v}(f)=0\qquad \mbox{ for all }[v] \in \bP(V).$$
This implies that $\pa{v}(\pa{u}(f))=0$ for all $[v]\in \bP(V)$: $V(\pa{u}(f))$ is a quadric whose singular locus contains the $(n-k-1)$-plane $\Pi=\bP(V)$. Therefore, $H_f(u)$ has rank at most $k+1$ and thus $\bP(U)\subseteq \cD_{k+1}(f)$.\\

Finally, notice that $\dim(\cD_{k}(f))\leq k-1$ by Proposition \ref{PROP:Comp_Of_Gamma} so $\bP(U)\simeq \bP^k$ cannot be contained in $\cD_{k}(f)$. Hence, for the general point $[u]\in \bP(U)$ the singular locus of $V(\pa{u}(f))$ is exactly the $(n-k-1)$-plane $\Pi$. In other terms, we have $\iota([u])=\bP(V)$ for $[u]\in \bP(U)$ general. 
\end{proof}

\begin{corollary}
\label{COR:Linear_planes_imply_Dk_not-empty}
Let $f\in \cU$ and assume that there exists $k\geq1$ such that $\cD_{n-k}(f)$ contains a $(n-k-1)$-plane. Then $\cD_{k}(f)\neq \emptyset$.
\end{corollary}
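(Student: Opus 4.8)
The plan is to deduce the statement as a short consequence of Proposition~\ref{PROP:linearfactorsinDegLociDelux}, the dimension bound of Proposition~\ref{PROP:Comp_Of_Gamma}(b), and the analysis of the non-injectivity locus of the map $\varphi$ carried out in Proposition~\ref{PROP:PhiInjective}. The only thing that needs genuine argument is the one-step gap between the two propositions: Proposition~\ref{PROP:linearfactorsinDegLociDelux} produces a $\bP^k$ inside $\cD_{k+1}(f)$, whereas we want a (single) point of $\cD_k(f)$.

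First I would apply Proposition~\ref{PROP:linearfactorsinDegLociDelux} to the given $(n-k-1)$-plane $\Pi=\bP(V)\subseteq\cD_{n-k}(f)$. This yields a linear space $\bP(U)\simeq\bP^{k}$ contained in $\cD_{k+1}(f)$ such that $\iota([u])=\Pi$ for the general $[u]\in\bP(U)$. Next I would observe that $\bP(U)$ cannot be contained in $\cD_{k}(f)$, since $\dim\cD_{k}(f)\le k-1<k=\dim\bP(U)$ by Proposition~\ref{PROP:Comp_Of_Gamma}(b); in particular $\cD_{k+1}(f)\neq\cD_{k}(f)$, so the rational map $\varphi\colon\cD_{k+1}(f)\dashrightarrow\Gr(n-k-1,\bP^n)$ is defined with indeterminacy locus exactly $\cD_{k}(f)$, and the general point of $\bP(U)$ lies in its domain.

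Now I would use the hypothesis $k\ge1$: the space $\bP(U)\simeq\bP^{k}$ is positive-dimensional, so we may choose two distinct general points $[u_1],[u_2]\in\bP(U)$, both lying outside $\cD_{k}(f)$ and both satisfying $\iota([u_i])=\Pi$. Then $\varphi([u_1])=\varphi([u_2])$, i.e.\ $\varphi$ fails to be injective at these two points. Applying Proposition~\ref{PROP:PhiInjective} with $k+1$ in place of $k$, such a failure of injectivity forces the line $\bP(\langle u_1,u_2\rangle)$ — which is contained in $\bP(U)\subseteq\cD_{k+1}(f)$ — to meet $\cD_{k}(f)$. Hence $\cD_{k}(f)\neq\emptyset$.

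Given the earlier results there is essentially no obstacle here; the part requiring a little care is precisely the passage from ``$\cD_{k+1}(f)$ contains a $\bP^k$'' to ``$\cD_{k}(f)\neq\emptyset$'', which combines the dimension bound $\dim\cD_{k}(f)\le k-1$ with the fact that $\varphi$ is generically constant along $\bP(U)$. I would also point out that the hypothesis $k\ge1$ is indispensable: it is exactly what makes the determinantal polynomial $\det(\lambda A_1+\mu A_2)$ appearing in the proof of Proposition~\ref{PROP:PhiInjective} non-constant, hence vanishing somewhere on the relevant line.
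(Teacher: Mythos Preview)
Your proof is correct and follows essentially the same route as the paper: apply Proposition~\ref{PROP:linearfactorsinDegLociDelux} to obtain $\bP(U)\simeq\bP^k\subseteq\cD_{k+1}(f)$ with $\iota([u])=\Pi$ generically, use the dimension bound of Proposition~\ref{PROP:Comp_Of_Gamma}(b) to ensure $\cD_{k+1}(f)\neq\cD_k(f)$, and then invoke Proposition~\ref{PROP:PhiInjective} on two general points of $\bP(U)$ to force $\bP(U)\cap\cD_k(f)\neq\emptyset$. Your additional remarks on why $k\ge1$ is needed are accurate but go slightly beyond what the paper spells out.
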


\begin{proof}
Assume that $\bP(V)$ is a $(n-k-1)$-plane in $\cD_{n-k}(f)$. By Proposition \ref{PROP:linearfactorsinDegLociDelux} we have that there exist $\bP^k\simeq\bP(U)\subseteq\cD_{k+1}(f)$ such that $\bP(V)\subseteq \iota([u])$ for $[u]\in \bP(U)$ with equality holding for $[u]$ general. Since, in this case, $\cD_{k+1}(f)$ and $\cD_k(f)$ don't coincide, for dimensional reason (by Proposition \ref{PROP:Comp_Of_Gamma}), we can define the map $\varphi:\cD_{k+1}(f)\setminus \cD_{k}(f)\to G(n-k-1,\bP^n)$. Then, the injectivity of $\varphi$ fails on two general points of $\bP(U)$. Finally, by Proposition \ref{PROP:PhiInjective}, we have that $\bP(U)\cap \cD_{k}(f)\neq \emptyset$ as claimed. 
\end{proof}

We can now prove Theorem \ref{THM:charactTS}:

\begin{proof}
    First of all, let us assume that for a fixed $k\geq 0$ there exists $\bP^k\simeq\bP(V)\subseteq\cD_{k+1}(f)$. Then, by Proposition \ref{PROP:linearfactorsinDegLociDelux} there also exists $\bP(U)\simeq \bP^{n-k-1}$ contained in the locus $\cD_{n-k}(f)$. Moreover, we know that for all $[u]\in\bP(U)$ the projective space $\bP(V)$ is contained in $\iota([u])$. This means that for every $[u]\in \bP(U)$ and $[v]\in \bP(V)$ we have $uv=0$, with the identification $\bP^n=\bP(A^1)$. Let us notice that the spaces $\bP(U)$ and $\bP(V)$ are skew in $\bP^n$ and of complementary dimension. Indeed, if their intersection was non-trivial, we could find a point $[x]\in\bP(V)\cap\bP(U)$: from above, we would obtain $x^2=0$, against the smoothness of $V(f)$. We can then consider for $\bP^n$ a coordinate system $x_0,\dots,x_n$ where $\bP(V)=V(x_0,\dots,x_{k})$ and $\bP(U)=V(x_{k+1},\dots,x_n)$. Up to a change of coordinates, we can then write the polynomial $f$ with respect to these variables: since, by construction, $x_ix_j=0$ in $A^2$ for every $i=0,\dots,k$ and $j=k+1,\dots,n$, we get the claim.
    \medskip

    Let us now assume that $f$ is TS: as in Remark \ref{REM:Pignolotti} we can write it (up to a possible change of coordinates) as $f=f_1(\underline{x})+f_2(\underline{x}')$. As already observed, the Hessian matrix of $f$ is of the form
    $$H_f(\underline{x},\underline{x'})=\begin{bmatrix}
    H_{f_1}(\underline{x}) & 0 \\
    0 & H_{f_2}(\underline{x'})
    \end{bmatrix}.$$
    Hence, by defining $\bP(V):=V(x_0,\dots,x_k)\simeq \bP^k$ one easily sees that $\Rank(H_f(v))\leq k+1$ for every $[v]\in\bP(V)$, i.e. $\bP(V)\subseteq\cD_{k+1}(f)$ as claimed.
\end{proof}

To end this section, let us present some key examples of TS polynomials. 

\begin{example}[Cyclic cubics]
\label{EX:CyclicCubics}
The simplest examples of TS polynomials are the cyclic polynomials. We recall that a polynomial $f\in S^d$, where $S=\bK[x_0,\cdots,x_{n}]$, is cyclic if, up to a change of coordinates, it can be written as $f=x_0^d+g(x_1,\dots,x_{n})$, where $g\in\bK[x_1,\dots,x_{n}]_d$.
\smallskip

As observed before, $X=V(f)$ is smooth exactly when $V(g)\subset \bP^{n-1}$ is smooth and $h_f=d(d-1)x_0^{d-2}\cdot h_g(x_1,\dots,x_n)$ so the Hessian variety splits as the union of a hyperplane and a hypersurface of degree $n(d-2)$, namely
$$H=V(x_0)\qquad\mbox{ and }\qquad  W=V(h_g(x_1,\dots, x_n)).$$
Notice that $W$ doesn't need to be irreducible, but this is the case if $g$ is general (and $n\geq 3$). Under the identification $\bP^{n-1}\simeq V(x_0)$, we can say that the Hessian loci $\cD_k(g)$ live in $H=V(x_0)$. We denote by $\hat{\cD}_k(g)$ the cone over $\cD_k(g)\subseteq V(x_0)$ with vertex the point $P_0$. For example, one has $W=\hat{\cD}_{n-1}(g)$. Then, one can prove that
\begin{equation}
\label{EQ:HessianLoci_Cyclic}
\cD_k(f)=\cD_k(g)\cup \hat{\cD}_{k-1}(g) \cup \{P_0\}.
\end{equation}

It is well known that the general cubic surface $S=V(g)\subseteq \bP^3$ has an irreducible Hessian variety which is a quartic with $10$ nodes as the only singularities. This was known already by B. Segre (see \cite{Seg}) but one can also refer to the more recent \cite{DVG}. In particular
$$\cD_3(g)=\cH_g\qquad \cD_2(g)=\Sing(\cH_g)=\{Q_1,\dots, Q_{10}\}\qquad \cD_1(g)=\emptyset.$$
Using this observation and Equation \eqref{EQ:HessianLoci_Cyclic}, one can describe the stratification given by the Hessian loci of a general cyclic cubic threefold $X=V(f)$:
$$\cD_5(f)=\bP^4\qquad \cD_4(f)=\cH_f=H\cup W\qquad \cD_3(f)=\cH_g \cup \bigcup_{i=1}^{10}\langle P_0,Q_i\rangle$$ $$\cD_2(f)=\{P_0\}\cup \{Q_1,\dots, Q_{10}\}\qquad \cD_1(f)=\{P_0\}.$$
Among these, only $H,W\subset \cD_4(f)$, $H\cap W=\cH_g\subset \cD_{3}(f)$ and $\{P_0\}\subset \cD_1(f)$ give irreducible components of $\Gamma_f$ (this will be clear from Lemma \ref{LEM:triangfam}).
\end{example}

Since smooth binary cubic forms can be written as sum of $2$ cubes, every TS polynomial in $n+1$ variables, with $n\leq4$, is necessarily cyclic (see \cite{BFGre} for details). Let us now describe a new phenomenon arising in $\bP^5$.

\begin{example}[A TS cubic which is not cyclic]
\label{EX:cub+cub}
Let $g_1,g_2\in S_w=\bK[w_0,w_1,w_2]$ be such that $V(g_1)$ and $V(g_2)$ are smooth cubic curves in $\bP^2$ which are not the Fermat curve. This is equivalent to ask that $V(g_i)$ is a cubic whose Hessian $V(h_{g_i})$ is irreducible.
\smallskip

A smooth cubic fourfold $X$ of TS type is not cyclic if and only if, up to a change of coordinates, it is defined by a polynomial $f=g_1(x_0,x_1,x_2)+g_2(x_3,x_4,x_5)$. From the point of view of moduli, such fourfolds form 
a dimension $2$ variety in the moduli space of smooth cubic fourfolds. 
\smallskip

Consider the following varieties:
$$W_1=V(h_{g_1}(x_0,x_1,x_2))\quad W_2=V(h_{g_2}(x_3,x_4,x_5))\quad \Pi_1=V(x_0,x_1,x_2) \quad \Pi_2=V(x_3,x_4,x_5)$$
$$C_1=\Pi_2\cap W_1\simeq V(h_{g_1})\qquad C_2=\Pi_1\cap W_2\simeq V(h_{g_2})\qquad J=J(C_1,C_2),$$
where $J(C_1,C_2)$ is the join variety of $C_1$ and $C_2$, namely the union of all lines joining a point of $C_1$ and a point of $C_2$. If $\{i,j\}=\{1,2\}$, the variety $W_i$ is a cone over $C_j$ with vertex $\Pi_i$ and is irreducible by our assumption on the curves $V(g_1)$ and $V(g_2)$. Being $f$ a TS polynomial, one has that $\cH_f$ is indeed reducible. More precisely, since $h_f=h_{g_1}(x_0,x_1,x_2)h_{g_2}(x_3,x_4,x_5)$, one has that $$\cD_5(f)=\cH_f=W_1\cup W_2.$$ 
The other strata of the stratification induced by $f$ are
$$\cD_{4}(f)=\Sing(\cH_f)=J\qquad \cD_3(f)=\Pi_1\cup \Pi_2\qquad \cD_2(f)=C_1\cup C_2$$
whereas $\cD_1(f)=\emptyset$ as $X$ is not cyclic (by the results in \cite{BFGre}).
\smallskip

It is worth to highlight two facts. First of all, $\Pi_1$ and $\Pi_2$ are two $2$-planes contained in $\cD_3(f)$. These are exactly the $k$-planes contained in $\cD_{k+1}(f)$ whose existence is guaranteed by Theorem \ref{THM:charactTS} since $f$ is a TS polynomial. Moreover, note that for all $k\in \{2,3,4,5\}$, the dimension of $\cD_k(f)$ equals $k-1$, i.e. the maximum predicted by Proposition \ref{PROP:Comp_Of_Gamma}. In particular, $\Gamma_f$ splits as the union of $7$ irreducible fourfolds (this follows from Lemma \ref{LEM:triangfam}).
\end{example}


\section{Families of triangles of high dimension}
\label{SEC:FamiliesOfTriangles}

In this section, we focus on the study of suitable families of triangles for $\cH_f$ arising naturally, as we will see in a moment, when $\Sing(\cH_f)$ exceeds the expected dimension. Moreover, we prove Theorem $C$.
\smallskip

Let us now set some notations and prove some technical results. 

We recall that, given a smooth cubic $V(f)\subseteq \bP^n\simeq \bP(A^1)$, a family of triangles for $\cH_f$ is a subvariety $\cF$ of
$$\cT=\{([x],[y],[z])\in (\bP(A^1))^3\,|\, xy=yz=xz=0\}.$$
We will denote by $\pi_i$ the natural projections from $\cF$ on the factors. For simplicity, if $\cF$ is a family of triangles for $\cH_f$, we will set $Y_i=\pi_i(\cF)$ for $i\in \{1,2,3\}$. Notice that $\dim(Y_i)\leq n-2$ since $Y_i\subseteq \cD_{n-1}(f)$ which has dimension at most $n-2$ by Proposition \ref{PROP:Comp_Of_Gamma}.
\smallskip

Moreover, recall that if $[f]\in \cU$, by Proposition \ref{PROP:Comp_Of_Gamma}, all components of $\Gamma_f$ come from the Hessian loci of $f$. More precisely, if $Z$ is an irreducible component of $\cD_{k}(f)$ of dimension $k-1$, there exists an irreducible component $\Gamma_f$ which dominates $Z$ by first projection. We will denote by $\tilde{Z}$ such component.

\begin{lemma}\label{LEM:triangfam}
    Let $[f]\in\cU$ and assume $k\in \{1,\dots, n-1\}$. Consider an irreducible component $W$ of $\cH_f$ and an irreducible component $Z$ of $\cD_{k}(f)$ of dimension $k-1$ which is contained in $W$. Then $\tilde{Z}\cap \tilde{W}$ dominates $Z$ via the first projection $\pr_1$.
    In particular, there exists a family of triangles $\cF$ for $\cH_f$ of dimension at least $k-1$.    Moreover, if $k=n-1$, then every family as above has dimension exactly $n-2$.
\end{lemma}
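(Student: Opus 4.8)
The plan is to realise $\tilde{Z}$ and $\tilde{W}$ as \emph{distinct} irreducible components of $\Gamma_f$ (distinct since $\pr_1(\tilde Z)=Z$ and $\pr_1(\tilde W)=W$ have different dimensions $k-1\leq n-2$ and $n-1$, and both are components by Proposition~\ref{PROP:Comp_Of_Gamma}), to show that their intersection $\tilde Z\cap\tilde W$ still dominates $Z$ under $\pr_1$, and then to turn points of $\tilde Z\cap\tilde W$ into triangles by Lemma~\ref{LEM:Triangles_and_Singularities}. The core of the argument is the analysis of the fibres of $\pr_1$ along $Z$.

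First I would observe that the general point $[x]\in Z$ has $\Rank(H_f(x))=k$: the rank is $\leq k$ on $Z\subseteq\cD_k(f)$, and were it $\leq k-1$ on a dense subset we would get $Z\subseteq\cD_{k-1}(f)$, contradicting $\dim\cD_{k-1}(f)\leq k-2$ (Proposition~\ref{PROP:Comp_Of_Gamma}). Hence $\pr_1^{-1}([x])=\{[x]\}\times\iota([x])$ is an irreducible $\bP^{n-k}$. Since $\tilde Z$ dominates $Z$ and $\Gamma_f$ has pure dimension $n-1$, the general fibre of $\pr_1|_{\tilde Z}\colon\tilde Z\to Z$ has dimension exactly $n-k$; being a closed subset of that dimension inside the irreducible $\pr_1^{-1}([x])$, it must equal the whole fibre, so $\pr_1^{-1}([x])\subseteq\tilde Z$ for general $[x]\in Z$. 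On the other hand $\pr_1|_{\tilde W}\colon\tilde W\to W$ is surjective (its image is closed and dense) and $Z\subseteq W$, so $\pr_1^{-1}([x])\cap\tilde W\neq\emptyset$. Hence $\emptyset\neq\pr_1^{-1}([x])\cap\tilde W\subseteq\tilde Z\cap\tilde W$ for general $[x]\in Z$, and $\tilde Z\cap\tilde W$ dominates $Z$.

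Because $\tilde Z\neq\tilde W$ are irreducible components of $\Gamma_f$, every point of $\tilde Z\cap\tilde W$ is singular in $\Gamma_f$, hence by Lemma~\ref{LEM:Triangles_and_Singularities} it is the image of some triangle under the projection $\rho\colon\cT\to\Gamma_f$, $([x],[y],[z])\mapsto([x],[y])$. Therefore $\rho^{-1}(\tilde Z\cap\tilde W)$ is a closed subvariety of $\cT$ mapping onto $\tilde Z\cap\tilde W$, which in turn dominates $Z$; hence some irreducible component $\cF$ of $\rho^{-1}(\tilde Z\cap\tilde W)$ dominates $Z$ via $\pi_1$, and it is a family of triangles for $\cH_f$ with $\dim\cF\geq\dim Z=k-1$.

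For the last assertion take $k=n-1$, so $Z$ is a component of $\cD_{n-1}(f)=\Sing(\cH_f)$ of dimension $n-2$ whose general point $[x]$ has $\Rank(H_f(x))=n-1$, i.e.\ $\iota([x])\simeq\bP^1$. I would then show there are at most two triangles with such a first vertex: if $([x],[y],[z])$ is one, then $\ker H_f(x)=\langle y,z\rangle$, and for any triangle $([x],[y'],[z'])$, writing $y'=\lambda y+\mu z$ and $z'=\lambda' y+\mu' z$, the identity $y'z'=\lambda\lambda' y^2+\mu\mu' z^2=0$ together with the independence of $y^2,z^2$ (Lemma~\ref{LEM:squareindep}) and $[y']\neq[z']$ forces $\{[y'],[z']\}=\{[y],[z]\}$. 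Thus $\pi_1|_{\cF}$ has finite general fibre, so $\dim\cF=\dim\pi_1(\cF)=n-2$, which together with $\dim\cF\geq n-2$ gives equality; the same computation applies to any family of triangles dominating such a $Z$. The only genuinely delicate step is the claim in the second paragraph that the full irreducible fibre $\pr_1^{-1}([x])$ lies inside $\tilde Z$, resting on the generic rank along $Z$ and a dimension count; the rest is a routine assembly of Proposition~\ref{PROP:Comp_Of_Gamma}, Lemma~\ref{LEM:Triangles_and_Singularities} and Lemma~\ref{LEM:squareindep}.
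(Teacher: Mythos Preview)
Your proof is correct and follows essentially the same route as the paper: you establish that the full fibre $\pr_1^{-1}([x])$ lies in $\tilde Z$ for general $[x]\in Z$ by the dimension count, intersect with $\tilde W$, and invoke Lemma~\ref{LEM:Triangles_and_Singularities} on the singular locus of $\Gamma_f$. The only difference is in the final step for $k=n-1$: the paper argues that a positive-dimensional fibre inside $\{[x]\}\times\bP^1\times\bP^1$ would meet the diagonal and produce a singular point of $V(f)$, whereas you compute directly via Lemma~\ref{LEM:squareindep} that there are at most two triangles over a general $[x]$ --- your version is slightly more explicit and gives the sharper count, but both reach the same conclusion.
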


\begin{proof}
Notice that $Z$ is not contained in $\cD_{k-1}(f)$ by Proposition \ref{PROP:Comp_Of_Gamma} as we are assuming $\dim(Z)=k-1$. Hence, the general point $z\in Z$ lies in 
$\cD_k(f)\setminus \cD_{k-1}(f)$ and thus $\iota([z])\simeq \bP^{n-k}$. Since the general fiber of 
$\pr_1|_{\tilde{Z}}:\tilde{Z}\to Z$ has dimension $n-k$ by construction, one has that the whole fiber 
$\pr_1^{-1}([z])=\{[z]\}\times \iota([z])$
is contained in $\tilde{Z}$. On the other hand, 
$\pr_1|_{\tilde{W}}:\tilde{W}\to W$ is surjective and $Z\subseteq W$ by assumption so there exists at least a point $p=([z],[y])$ of the whole fiber $\pi_1^{-1}([z])$ with $p\in \tilde{W}$. Then $p \in U=\tilde{W}\cap \tilde{Z}$ and $\pr_1|_{U}:U\to Z$ is such that $\pr_1|_{U}(p)=z$. In particular, $\pr_1|_{U}$ dominates $Z$.
\smallskip

By the above argument, we have that $\tilde{W}$ and $\tilde{Z}$ are irreducible components of $\Gamma_f$ which meet in a variety $U$ of dimension at least $k-1$. Then, we have a family of dimension $k-1$ since this variety is contained in $\Sing(\Gamma_f)$ by construction and each point yields (at least) a triangle by Lemma \ref{LEM:Triangles_and_Singularities}. 
\smallskip

Let $Z\subseteq\cD_{n-1}(f)$ be an irreducible component of dimension $n-2$ and let $\cF$ be a family of triangles dominating $Z$ via $\pi_1$, so that $\dim(\cF)\geq n-2$. By construction, the general point $[x]$ of $Z$ is such that $\iota([x])\simeq \bP^1$ so $\pi_1^{-1}([x])\subset\{[x]\}\times \bP^1\times \bP^1$. If the general fiber $\pi_1^{-1}([x])$ has positive dimension we would have that $\pi_1^{-1}([x])\cap \{[x]\}\times \Delta_{\bP^1}$ is not empty, thus giving rise to a singular point of $V(f)$. Then the general fiber of $\pi_1$ has dimension $0$ and thus $\dim(\cF)=n-2$.
\end{proof}

\begin{remark}
\label{REM:Non_Normal_BigFamilyofTriangles}
If $\cH_f$ is not normal, we have that there exists at least a family of triangles of dimension $n-2$. Indeed, we have that the singular locus of $\cH_f$ has dimension $n-2$ and equals $\cD_{n-1}(f)$ by Theorem \ref{THM:fundamental}. Hence, given an irreducible component $Z$ of $\Sing(\cH_f)$ of dimension $n-2$, we have that $Z$ yields a family of triangles of dimension $n-2$ as a consequence of Lemma \ref{LEM:triangfam}.
\end{remark}

\begin{lemma}
\label{LEM:bound}
    Let $X=V(f)\subset \bP^n$ be a smooth cubic hypersurface and let
    $\cF$ be an irreducible family of triangles. Then the following hold:
    \begin{enumerate}[(a)]
        \item If $\dim(Y_i)\geq 1$ and $Y_i\subset X$, then $\dim(Y_i)\leq n-3$;
        \item If $n\geq3$ and $\dim(Y_i)=n-2$ for some $i$, then no projection has dimension $0$ unless $V(f)$ is of Thom-Sebastiani type.
    \end{enumerate}
\end{lemma}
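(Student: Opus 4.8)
I would prove the two parts separately: (b) is quick and follows directly from the characterisation of Thom--Sebastiani polynomials, whereas (a) requires a study of the planes spanned by the triangles of $\cF$.

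\textbf{Part (b).} Up to relabelling assume $\dim(Y_1)=n-2$ and that some other projection has $0$-dimensional image, say $Y_3=\{[p]\}$ (the two indices are distinct since $n-2>0$). Every triangle of $\cF$ has third vertex $[p]$, so by the symmetry of the triangle relation (Remark~\ref{REM:CoseScemeSuiTriangoli}) each $([x],[y],[p])\in\cF$ satisfies $xp=yp=0$; hence $Y_1\subseteq\iota([p])=\bP(\ker(p\cdot\colon A^1\to A^2))$, which is a linear projective space. As $\cF$ is irreducible, $\overline{Y_1}$ is an irreducible closed subvariety of $\iota([p])$ of dimension $n-2$, and since $\dim\cD_0(f)\leq-1$ by Proposition~\ref{PROP:Comp_Of_Gamma}(b) we have $\dim\iota([p])\leq n-1$. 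If $\dim\iota([p])=n-1$ then $[p]\in\cD_1(f)$, so $\cD_1(f)$ contains a $\bP^0$ and $f$ is of Thom--Sebastiani type by Theorem~\ref{THM:charactTS} with $k=0$. If $\dim\iota([p])=n-2$, then $\overline{Y_1}=\iota([p])$ is a $\bP^{n-2}$, which lies in $\cD_{n-1}(f)$ because vertices of triangles do (Remark~\ref{REM:CoseScemeSuiTriangoli}); Theorem~\ref{THM:charactTS} with $k=n-2$ again shows $f$ is of Thom--Sebastiani type.

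\textbf{Part (a).} Assume for contradiction that $Y_1\subseteq X$ with $\dim(Y_1)=n-2$ (the index being $1$ up to relabelling). Then $Z:=\overline{Y_1}$ is an irreducible component of $\cD_{n-1}(f)=\Sing(\cH_f)$ of dimension $n-2$ contained in $X$, and, as in Lemma~\ref{LEM:triangfam}, $\cF$ has dimension $n-2$ with $\pi_1\colon\cF\to Z$ generically finite. For a general $T=([x],[y],[z])\in\cF$ we have $[x]\in\cD_{n-1}(f)\setminus\cD_{n-2}(f)$ (as $\dim\cD_{n-2}(f)\leq n-3$), so $\iota([x])$ is a line $\ell=\bP(\langle y,z\rangle)$ not containing $[x]$ by Lemma~\ref{LEM:squareindep}; set $\Pi=\psi(T)=\bP(\langle x,y,z\rangle)$. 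Since $[x]\in X$ we have $x^3=0$ in $A_f$, and together with $xu=0$ for $u\in\langle y,z\rangle$ a one-line computation gives $(\alpha x+u)^3=u^3$; thus $X\cap\Pi$ is the cone with vertex $[x]$ over the degree-$3$ divisor $X\cap\ell=\{u^3=0\}$ of $\ell\cong\bP^1$. In particular $\Pi$ always contains a line of $X$ through the moving point $[x]\in Z$, and either $\Pi\subseteq X$ (if $[y],[z]\in X$), or $X\cap\Pi$ is a single line of $X$ counted thrice (if exactly one of $[y],[z]$ lies in $X$), or $X\cap\Pi$ is a union of three distinct lines of $X$ through $[x]$.

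The remaining step is to derive a contradiction with $\dim(Z)=n-2$, and this is where I expect the real work. The plan is to exploit that $\psi$ has injective differential and is injective modulo permutations (Proposition~\ref{PROP:injtriangles}), so that $\psi(\cF)\subseteq\Gr(2,\bP^n)$ is a family of dimension exactly $n-2$ of planes meeting $X$ in concurrent lines, and then to bound the dimension of the locus of such planes case by case. The case $\Pi\subseteq X$ forces $X$ to contain a positive-dimensional family of planes, which is impossible for a smooth cubic (and vacuous for $n\leq4$, where $X$ contains no plane). In the other two cases one may either run a dimension count against the Fano variety of lines of $X$ — using that along $Z\subseteq\cD_{n-1}(f)$ the tangent cone of $X$ at $[x]$, i.e.\ the quadric $Q_x(v)=xv^2$, has $\ker(Q_x|_{H_x})=\langle x\rangle\oplus\ker(x\cdot)$ of dimension $\geq3$, so its rank drops by at least two and the lines of $X$ through $[x]$ arising in the construction are confined to a subvariety of the expected small dimension — or compute directly the Zariski tangent space $T_{\cF,T}$ at a general triangle and show $\dim T_{\cF,T}<n-2$. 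The bookkeeping here, in particular the ``exactly two vertices on $X$'' subcase in which $\cF$ sweeps out a family of lines of $X$, is what I expect to be the main obstacle; for a cubic surface it is immediate, since there are only finitely many lines.
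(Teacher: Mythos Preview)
Your part (b) is correct and matches the paper's argument essentially verbatim (only the labelling of indices differs).

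Your part (a), however, has a genuine gap: you set up a geometric picture (the cone $X\cap\Pi$) but never extract the bound $\dim(Y_i)\le n-3$ from it. The case-by-case plan you sketch is both incomplete and fragile --- for instance, the claim that a smooth cubic cannot contain a positive-dimensional family of planes is false once $n\ge 6$, so the ``$\Pi\subseteq X$'' branch would not close even if you wrote out the details; and the Fano-variety dimension counts you allude to are not carried out. Since the lemma is stated for arbitrary $n$, this approach would at best give a proof for small $n$.

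The paper's proof of (a) is a three-line tangent space estimate, and you actually brush past the right idea at the end of your proposal without executing it. Take a general $T=([x],[y],[z])\in\cF$ and use generic smoothness so that $d_T\pi_1:T_{\cF,T}\to T_{Y_1,[x]}$ is surjective. Lemma~\ref{LEM:TGVec_Triang_First_Order} gives $d_T\pi_1(T_{\cF,T})\subseteq\Ann_{A^1}(y^2,z^2)/\langle x\rangle$, while $Y_1\subseteq X$ forces $T_{Y_1,[x]}\subseteq T_{X,[x]}=\Ann_{A^1}(x^2)/\langle x\rangle$. Intersecting,
\[
T_{Y_1,[x]}\subseteq\Ann_{A^1}(x^2,y^2,z^2)/\langle x\rangle.
\]
By Lemma~\ref{LEM:squareindep} the three squares are independent, so Gorenstein duality gives $\dim\Ann_{A^1}(x^2,y^2,z^2)=n-2$; and since $[x]\in X$ means $x^3=0$, one has $x\in\Ann_{A^1}(x^2,y^2,z^2)$, whence the quotient has dimension $n-3$. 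This bounds $\dim(Y_1)$ directly, with no contradiction argument and no restriction on $n$.
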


\begin{proof}
    For $(a)$, w.l.o.g. we can assume $\dim(Y_1)\geq1$ and $Y_1\subset V(f)$. If $T=([x],[y],[z])\in \cF$ is a general triangle, then the differential
    $d\pi_{1,T}:T_{\cF,T}\rightarrow T_{Y_1,[x]}$ is surjective and it sends a tangent vector $(x',y',z')$ to $x'$. By Lemma \ref{LEM:TGVec_Triang_First_Order}, $x'\in\Ann_{A^1}(y^2,z^2)/\langle x\rangle$. Moreover, since $Y_1\subset X$, by Proposition \ref{PROP:descrapol} we have $X=\{[x]\,|\, x^3=0\}$ so $T_{X,[x]}=\Ann_{A^1}(x^2)/\langle x\rangle$.
    Hence, 
    $$T_{Y_1,[x]}\subseteq \Ann_{A^1}(x^2,y^2,z^2)/\langle x\rangle.$$
    Since $T$ is a triangle, by Lemma \ref{LEM:squareindep} one has that $\langle x^2,y^2,z^2\rangle$ has dimension $3$ and thus, by the Gorenstein duality,  $\dim(\Ann_{A^1}(x^2,y^2,z^2))=n+1-3=n-2$. Moreover, being $[x]\in X$, one has $x\in \Ann(x^2,y^2,z^2)$ so $\Ann_{A^1}(x^2,y^2,z^2)/\langle x\rangle$ has dimension $n-3$.
    \smallskip
    
    For $(b)$, w.l.o.g. assume that $Y_3$ is of dimension $n-2$. By contradiction, let us assume that $Y_1=\{[x]\}$ and that $f$ is not of TS type. Hence $Y_3\subseteq\iota([x])$ and this implies that $\iota(x)=\bP^s$ with $s\in\{n-2,n-1\}$. 
    
    Then, we would get $\iota([x])=\bP^{n-2}=Y_3\subseteq\cD_{n-1}(f)$ and $[x]\in\cD_1(f)$, respectively. Both cases yield a contradiction by Theorem \ref{THM:charactTS}.
\end{proof}

Before proving Theorem $C$ (Theorem \ref{THM:moon}) let us focus on (families of) triangles with all vertices on the cubic $X$. These are linked to families of $2$-planes in the cubic hypersurface:

\begin{remark}
\label{RMK:planesin34folds}
First of all, recall that if $T=([x],[y],[z])$ is a triangle for $\cH_f$, then $\left\langle [x],[y],[z]\right\rangle=\bP^2$, by Lemma \ref{LEM:squareindep}. If we assume moreover, that all the vertices of $T$ belong to the cubic hypersurface $X$ we have $x^3=y^3=z^3=0$ and also $xy=yz=zx=0$; this implies that the $2$-plane is actually contained in $X$. 
Hence, a triangle with three vertices on $X$ can not exist, if $X$ is a smooth cubic hypersurface of dimension at most $3$.
Furthermore, since on smooth cubic fourfolds one has at most a finite number of $2$-planes  (see, for example, \cite{DIO}), by Proposition \ref{PROP:injtriangles} we can have at most a finite number of triangles with all the vertices on the cubic $X$.
\end{remark}

\begin{lemma}
\label{LEM:contr_fibers}
Assume that $\cF$ is a family of triangles for $\cH_f$ with $\dim(Y_1)=\dim(\cF)>\dim(Y_3)$. Then, none of the fibers of the projection $\pi_3$ can be contracted to points via $\pi_1$.  
\end{lemma}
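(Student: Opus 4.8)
The statement is purely formal in the three projections from $\cF$: the triangle structure itself is never needed, only that $\cF$ is (as everywhere in this section) an irreducible projective variety and that the $\pi_i$ are morphisms. Set $d=\dim\cF=\dim Y_1>\dim Y_3$; the hypothesis $\dim Y_1=\dim\cF$ says exactly that $\pi_1\colon\cF\to Y_1$ is generically finite, while $\dim Y_3<\dim\cF$ says that $\pi_3$ drops the dimension, so its fibres are positive dimensional.

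First I would replace the two relevant projections by the single morphism $\rho=(\pi_1,\pi_3)\colon\cF\to\bP^n\times\bP^n$ and put $\cG=\rho(\cF)$. Since $\cF$ is projective, $\cG$ is a \emph{closed} irreducible subvariety of $\bP^n\times\bP^n$; this closedness is the only genuinely useful input, as it kills the usual ``the image is merely constructible'' difficulties that would otherwise appear below. Denote by $\pr_1,\pr_2$ the two projections of $\bP^n\times\bP^n$ onto $\bP^n$. From $\pi_1=\pr_1\circ\rho$ we get $\pr_1(\cG)=\pi_1(\cF)=Y_1$, whence $d=\dim Y_1\le\dim\cG\le\dim\cF=d$ and therefore $\dim\cG=d$. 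On the other hand $\pr_2(\cG)=\pi_3(\cF)=Y_3$, so $\pr_2$ restricts to a surjective morphism $g\colon\cG\to Y_3$ of irreducible varieties with $\dim\cG-\dim Y_3\ge 1$. By the theorem on the dimension of the fibres of a morphism, \emph{every} fibre $g^{-1}([z_0])$ with $[z_0]\in Y_3$ then has dimension $\ge 1$.

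It remains only to recognise this fibre inside $\cF$. For $[z_0]\in Y_3$ one has
\[
g^{-1}([z_0])=\cG\cap\bigl(\bP^n\times\{[z_0]\}\bigr)=\rho\bigl(\pi_3^{-1}([z_0])\bigr),
\]
and, since $\pr_1$ maps $\bP^n\times\{[z_0]\}$ isomorphically onto $\bP^n$, this yields $\pr_1\bigl(g^{-1}([z_0])\bigr)=\pi_1\bigl(\pi_3^{-1}([z_0])\bigr)$, a variety of dimension $\ge 1$. Hence $\pi_1(\pi_3^{-1}([z_0]))$ is not a single point, and since $[z_0]\in Y_3$ was arbitrary, no fibre of $\pi_3$ can be contracted to a point by $\pi_1$, which is the assertion.

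I do not anticipate a real obstacle here. The two steps deserving a word of care are: the standing irreducibility of $\cF$ (without it the statement already fails, e.g.\ for a disjoint union of a ``good'' high-dimensional family and a small ``bad'' one); and the use of the fibre-dimension bound in the strong form valid for \emph{all} fibres, which is legitimate precisely because $g\colon\cG\to Y_3$ is a surjective morphism between irreducible varieties. The one non-mechanical move is passing from $\cF$ to the closed image $\cG=\rho(\cF)$, after which everything is bookkeeping.
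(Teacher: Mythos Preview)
Your proof is correct and follows essentially the same route as the paper: both pass to the image $\cG=(\pi_1,\pi_3)(\cF)\subseteq\bP^n\times\bP^n$ and reduce the question to a fibre-dimension statement for $\pr_2|_{\cG}$. The only cosmetic difference is that the paper phrases it via semicontinuity (the locus of $z$ with zero-dimensional $\pr_2$-fibre is open, hence if nonempty it is dense, contradicting $\dim Y_1=\dim\cF$), while you invoke the lower bound on all fibre dimensions directly; these are two faces of the same theorem.
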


\begin{proof}
This follows from a more general fact: if $g:X\to Z$ is a surjective morphism between irreducible varieties and $f:X\to Y$ is a morphism, the locus
$$A=\{ z\in Z \,|\, \dim(f(g^{-1}(z)))=0\}$$
is an open Zariski set of $Z$. In order to prove this let us consider the map $F=(f,g):X\rightarrow Y\times Z$ and denote by $X'\subseteq Y\times Z$ the image of $X$ under the morphism $F$. Moreover, let $p_1$ and $p_2$ be the two projections from $X'$ to $Y$ and $Z$ respectively. It is then clear that 
$$A=\{z\in Z \ | \ \dim(p_1(p_2^{-1}(z)))=0\}=\{z\in Z \ | \ \dim(p_2^{-1}(z))=0\}$$
and thus it is an open subset of $Z$. 
In our situation, if we assume that a fiber of $\pi_3$ is contracted to points via $\pi_1$ then the same is true for the general one, contradicting the assumption $\dim(Y_1)=\dim(\cF)$.
\end{proof}

We are now ready to prove Theorem $C$, one of the main ingredient in the proof of Theorem $A$.

\begin{theorem}
    \label{THM:moon}
    Assume $n\leq 5$ and consider $[f]\in\cV$. If $\cF$ is an irreducible family of triangles for $\cH_f$ with $\dim(\cF)=\dim(\pi_1(\cF))=n-2$, then the general element in $\cF$ is such that none of its vertices belongs to $X=V(f)$.
\end{theorem}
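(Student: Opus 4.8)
The plan is to argue by contradiction, assuming that the general triangle $T=([x],[y],[z])\in\cF$ has at least one vertex on $X$, and to derive a contradiction using the dimension bounds from Lemma \ref{LEM:bound} together with the characterisation of TS polynomials in Theorem \ref{THM:charactTS}. First I would analyse which of the three projections $Y_i=\pi_i(\cF)$ can be contained in $X$. If, say, $[x]\in X$ for the general $T$, then $Y_1\subseteq X$; since $\dim(Y_1)=n-2$ and $f\in\cV$ is not TS, part $(a)$ of Lemma \ref{LEM:bound} forces $\dim(Y_1)\le n-3$, a contradiction --- unless $Y_1$ happens to be a point, but that is excluded because $\dim(\pi_1(\cF))=n-2\ge 1$ (as $n\ge 3$, the case $n=2$ having $\cH_f$ a curve where the statement is essentially vacuous or handled separately). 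So the vertex lying on $X$ cannot be the first one; by the permutation symmetry of triangles (Remark \ref{REM:CoseScemeSuiTriangoli}) the real content is: can the \emph{general} triangle have $[y]\in X$ or $[z]\in X$ while $[x]\notin X$? Here $Y_2$ or $Y_3$ would be a subvariety of $X$, but it need not have dimension $n-2$.

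The key step is then to control the dimensions of $Y_2$ and $Y_3$ and combine them with $\dim(\cF)=n-2$. Suppose the general $T$ has $[z]\in X$, so $Y_3\subseteq X$. By Lemma \ref{LEM:bound}$(a)$, if $\dim(Y_3)\ge 1$ then $\dim(Y_3)\le n-3$; and by Lemma \ref{LEM:bound}$(b)$ no projection can be a point (else $f$ is TS). Hence $1\le \dim(Y_3)\le n-3$, so the fibres of $\pi_3:\cF\to Y_3$ have dimension $\ge (n-2)-(n-3)=1$; since $\dim(Y_1)=\dim(\cF)$ strictly exceeds $\dim(Y_3)$, Lemma \ref{LEM:contr_fibers} says no fibre of $\pi_3$ is contracted by $\pi_1$, so $\pi_1$ maps each positive-dimensional fibre of $\pi_3$ to a positive-dimensional subvariety of $Y_1$. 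Fix a general point $[z]\in Y_3$ and let $F_{[z]}=\pi_3^{-1}([z])$; the vertices $[x],[y]$ of triangles in $F_{[z]}$ all lie in $\iota([z])\simeq\bP^{n-1-\Rank H_f(z)}$. Because $[z]\in X$ means $z^2\ne 0$ but we also have $xz=yz=0$, the span $\langle x,y,z\rangle$ is a $2$-plane on which multiplication by $z$ kills $x$ and $y$; I would use the apolar description $\iota([z])=\bP(\Ann_{A^1}(z))$ together with the fact that $z\in X$ (i.e. $z^3=0$, so $z\in\Ann_{A^1}(z^2)$) to show $\iota([z])$ contains $[z]$ itself, so a one-parameter family of triangles through $[z]$ with varying $[x]$ sweeps out a positive-dimensional locus inside $\iota([z])\cap\cD_{n-1}(f)$ that is forced, by further degeneration, into a smaller Hessian locus $\cD_k(f)$ with $k\le n-1$; tracking the rank drop should eventually produce either a linear space $\bP^j$ inside some $\cD_{j+1}(f)$ or a point of $\cD_1(f)$, contradicting Theorem \ref{THM:charactTS} for $f\in\cV$.

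The main obstacle I expect is exactly this last bookkeeping: showing that the one-dimensional family of vertices $[x]$ moving in $\iota([z])$ while the triangle condition $x y = xz = yz = 0$ persists forces a genuine rank collapse, i.e.\ that the plane $\langle x,y,z\rangle$ or a sub-pencil of it lands in $\cD_{n-2}(f)$ or lower, so that one of the prohibited linear subspaces of Theorem \ref{THM:charactTS} materialises. The delicate point is that a priori only $[z]$ is fixed and $[x],[y]$ both move, so I would want to first reduce to the situation where, along $F_{[z]}$, the third vertex $[z]$ is fixed and the other two vary in a pencil; then the matrices $H_f(x)$ for $x$ in that pencil all annihilate a common space (the one containing $[y]$ and $[z]$), giving the rank bound. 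Because $n\le 5$ is assumed, one can if necessary finish by a short case analysis on $\dim(Y_2),\dim(Y_3)\in\{1,2\}$ (for $n=4,5$) rather than a uniform argument, which is presumably why the hypothesis $n\le 5$ appears; the cases $n=3,4$ should be quick and the bulk of the work, as flagged in the introduction, sits in $n=5$.
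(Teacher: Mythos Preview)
Your opening reduction is correct and matches the paper: since $\dim(Y_1)=n-2$, Lemma~\ref{LEM:bound}(a) forbids $Y_1\subseteq X$, so the offending vertex (if any) must be the second or third; after this, by Lemma~\ref{LEM:bound}(b) no $Y_i$ is a point, and one may assume $Y_3\subseteq X$ with $1\le\dim(Y_3)\le n-3$. From here, however, your argument has a concrete error and, more importantly, misses the geometric mechanism that actually drives the proof.

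The error: you write that $z\in X$ (i.e.\ $z^3=0$) gives $z\in\Ann_{A^1}(z^2)$ and hence $[z]\in\iota([z])$. But $\iota([z])=\bP(\Ann_{A^1}(z))$, not $\bP(\Ann_{A^1}(z^2))$; since $V(f)$ is smooth one has $z^2\neq 0$, so $[z]\notin\iota([z])$ always. This undermines the degeneration you sketch. Relatedly, your hope that ``the matrices $H_f(x)$ for $x$ in that pencil all annihilate a common space containing $[y]$ and $[z]$'' is unfounded: as $[x]$ moves the partner $[y]$ moves too, so only $z$ is common to the kernels, which gives no extra rank drop and no linear $\bP^j\subset\cD_{j+1}(f)$.

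The paper's route is quite different and does not try to manufacture linear spaces inside the Hessian loci. The key input you do not use is Remark~\ref{RMK:planesin34folds}: a triangle with all three vertices on $X$ spans a $2$-plane contained in $X$, and a smooth cubic of dimension $\le 3$ contains no $\bP^2$ while a smooth cubic fourfold contains at most finitely many. The paper first shows $Y_2\not\subseteq X$ (otherwise $\pi_1^{-1}(Y_1\cap X)$ would already be an $(n-3)$-dimensional family of triangles entirely on $X$), and then, for $n=4,5$, runs a case analysis on $\dim(Y_2)$ and $\dim(Y_3)$ whose aim in every branch is either to produce a triangle with all vertices on $X$ (impossible by the remark) or to force some $\iota([x_0])\simeq\bP^2$ or $\bP^3$ to lie inside $X$ (impossible for dimension reasons). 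The tools are the fibre non-contraction of Lemma~\ref{LEM:contr_fibers} and elementary incidence geometry of the lines $\langle[y],[z]\rangle\subset X$ swept out inside $\iota([x_0])$; the TS characterisation via $\cD_k$ is invoked only in passing (to bound $\dim\iota([x_0])$ and to exclude $\bP^{n-2}\subset\cD_{n-1}$), not as the engine of the contradiction. Your proposal, as it stands, does not reach a contradiction.
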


\begin{proof}
Notice that the proof follows easily if none of the three projections of $\cF$ is contained in $X$. Indeed, in this case, $\pi_i^{-1}(Y_i\cap X)$, the locus where the triangles of $\cF$ have the $i$-th vertex on the cubic $X$, is a proper closed subset of $\cF$. We would like to show that also for $n\leq 5$ this is indeed the only possible case, i.e. no projection of $\cF$ can be contained in $X$. For $n=3$, this is an easy consequence of Lemma \ref{LEM:bound} so we can assume $n\geq 4$. 
\smallskip

W.l.o.g, we can assume that $\pi_1$ is generically finite (and thus, by Lemma \ref{LEM:bound}, $Y_1$ is not contained in $X$) and, by contradiction, that $Y_3\subset X$. Notice that, under these assumptions, $Y_2$ is not contained in $X$. Otherwise, $\pi_1^{-1}(Y_1\cap X)$ would be an $(n-3)$-dimensional family of triangles with all the vertices contained in $X$. Then, we would have a contradiction as observed in Remark \ref{RMK:planesin34folds}. For brevity, set $\cF_c=\cF\cap (X\times X\times X)$, i.e. $\cF_c$ is the locus of the triangles of $\cF$ with all $3$ vertices on the cubic hypersurface $X$.
\smallskip

{\bf Assume that $n=4$}. By Lemma \ref{LEM:bound}, since $Y_3\subset X$ and $Y_1$ has dimension $2=n-2=\dim(\cF)$, we have that $\dim(Y_3)=1$. Then all the fibers of $\pi_3$ have pure dimension $1$. The dimension of $Y_2$ is either $1$ or $2$. If the dimension of $Y_2$ is $1$, all the fibers of $\pi_2$ are curves too and $Y_2\cap X$ is not empty. Consider $[y_0]\in Y_2\cap X$ and its fiber $C=\pi_2^{-1}([y_0])$. By Lemma \ref{LEM:contr_fibers}, $C$ can not be contracted by $\pi_1$ so $\pi_1(C)$ is a curve. Then, $\pi_1(C)\cap X$ is not empty and we can consider a point $[x_0]$ in this intersection. Hence, any element in $\pi_1^{-1}([x_0])\cap C\neq \emptyset$ is a triangle in $\cF_c$. This is impossible by Remark \ref{RMK:planesin34folds}. Then we have necessarily $\dim(Y_2)=2$.
\smallskip

Let $Y$ be an irreducible component of $Y_2\cap X$. Being $Y_2\not\subseteq X$  and of dimension $2$, $Y$ is a curve and there exists an irreducible component $C$ in $\pi_2^{-1}(Y)$ of dimension $1$ dominating $Y$ via the second projection. If either $\pi_1(C)$ is a curve or $\pi_1(C)=[x_0]$ with $[x_0]\in X$ we have an element in $\cF_c$, so the only possible case is $\pi_1(C)=[x_0]$ with $[x_0]\not\in X$.
\smallskip

Looking at the third projection we have that $\pi_3(C)$ either is a point $[z]$ or it coincides with $Y_3$. Moreover, let us observe that $\iota([x_0])\simeq \bP^s$ with $s\in\{1,2\}$ such that $Y$ and $\pi_3(C)$ are contained in $\iota([x_0])$. To rule out the first case, namely $\pi_3(C)=[z]$, first of all, observe that $[z]\not\in Y$, otherwise we would have a singular point for the cubic $X$. Then $s$ is forced to be $2$. Moreover, by construction, we have $yz=0$ and $y^3=z^3=0$ for any $[y]\in Y$: by reasoning as in Remark \ref{RMK:planesin34folds} all the lines $\langle [y],[z]\rangle$ lie in $X$. This implies that the whole $\iota([x_0])$ is contained in $X$, but this is not possible.
\smallskip

For the remaining case, we have that $\pi_3(C)=Y_3$ and, by construction, both the curves $Y$ and $Y_3$ are contained in $\iota([x_0])$. Then, if $s=1$ we necessarily have $Y=Y_3\simeq \bP^1$ and $C$ is a family of triangles of dimension $1$ in $\{[x_0]\}\times Y\times Y$. This yields a contradiction since $C$ has to meet $\{[x_0]\}\times \Delta_{Y}$, thus giving a singular point for $X$. Hence we have necessarily $s=2$ and we can assume $Y\neq Y_3$ or $Y=Y_3\neq \bP^1$. In both cases, as done above, considering the lines $\langle [y],[z]\rangle$ with $[y]\in Y, \ [z]\in Y_3$ and $yz=0$, we get that the $2$-plane $\iota([x_0])$ is contained in the cubic threefold $X$, which is not possible. 
\smallskip

{\bf Assume now that $n=5$}. We are working in the following framework: $\cF$ is an irreducible $3$-dimensional family of triangles with $\pi_1$ generically finite, $Y_1,Y_2$ not contained in $X$ and $Y_3\subseteq X$ (this will lead to a contradiction). Being $Y_3\subseteq X$ by assumption, $\cF_c$ is cut out from $\cF$ by two divisors so its expected dimension is $n-4=1$. Then, either $\cF_c$ is empty, or $\dim(\cF_c)\geq 1$. On the other hand, as observed in Remark \ref{RMK:planesin34folds}, under the above-mentioned hypotheses we have that $\dim(\cF_c)\leq 0$ so $\cF_c$ is necessarily empty. We will now prove that $\cF_c$ is not empty, thus leading to a contradiction.
\smallskip

First of all, notice that $\dim(Y_2)\in \{1,2,3\}$ by Lemma \ref{LEM:bound}. If $\dim(Y_2)\leq 2$ the general fiber of $\pi_2$ has positive dimension and cannot be contracted to a point by $\pi_1$ by Lemma \ref{LEM:contr_fibers}. Then, its image meets $X$ and thus we produce a triangle in $\cF_c$ as the analogous case for the threefold. We can then assume $\dim(Y_2)=3$ so that $\pi_2$ is generically finite as $\pi_1$.
\smallskip

Denote by $Y$ an irreducible component of $Y_2\cap X$. The preimage $\pi_2^{-1}(Y)$ has dimension $2$ and we can consider an irreducible component $S\subset\pi_2^{-1}(Y)$ dominating $Y$. If $\pi_1(S)$ is not a point or is a point on the cubic fourfold, as in the threefold case one can easily construct an element in $\cF_c$: we can assume $\pi_1(S)=[x_0]\not \in X$. As done in the previous case, we have that $\iota([x_0])\simeq \bP^s$ containing $Y$ and $\pi_3(S)$ (thus $s\in\{2,3\}$ since $X$ is not of TS type), where $\pi_3(S)\subseteq Y_3$ can be either a point $[z]$, a curve $C\not\subset Y_3$ (with $\dim(Y_3)=2$) or the whole $Y_3$ (with $\dim(Y_3)\in\{1,2\})$. To conclude the proof, let us study these distinguished cases. 
\smallskip
\begin{itemize}
    \item {\bf $\pi_3(S)=[z_0]$}: since $[z_0]\not\in Y$ (otherwise we would have a singular point in $X$), $s$ is forced to be equal to $3$. Considering again the lines $\langle [y],[z_0]\rangle$ with $[y]$ varying in $Y$ we have that the whole $3$-space $\iota([x_0])$ is contained in the smooth cubic fourfold $X$, which is clearly not possible.  
    \item {\bf $\pi_3(S)=C$}: In this case one has $\iota([x_0])\simeq \bP^3$, since, otherwise, we would have $C\subseteq Y=\bP^2$ and $S\subseteq \{[x_0]\}\times Y\times Y$ and then a singular point for $X$ as in a previous case. If we assume $C\not\subset Y\subset \iota([x_0])=\bP^3$, then one easily sees that the lines $\langle [y],[z]\rangle$ with $[y]\in Y, [z]\in C$ and $yz=0$ cover $\iota([x_0])$: we have a contradiction since we would have a $3$-projective space in $X$. The only remaining case to analyse is then the one where $C\subset Y\subset \bP^3$ with $Y$ surface which is not a $\bP^2$. In this case, $S$ is a surface in $\{[x_0]\}\times Y\times C$ with the projections $p_2=\pi_2|_S$ and $p_3=\pi_3|_S$ which are surjective. Then, for all $[z]\in C$, $p_3^{-1}([z])$ has pure dimension $1$. Let $[z]$ be a point in $C$ and let $D$ be an irreducible component of one of those fibers. For all $[y]\in p_2(D)$ we have $yz=y^3=z^3=0$ and $[z]\not\in p_2(D)$ so the joint variety $J(p_2(D),[z])$ has dimension $2$, is a cone with vertex $[z]$ and is completely contained in $\iota([x_0])\cap X$. Since, $\iota([x_0])\simeq \bP^3$ cannot be contained in $X$, these cones have to vary at most discretely, when $[z]$ moves in $C$. Notice that $Y$ lies, by construction, in the union of these cones so $[z]$ is in the vertex $\Ve(Y)$ of $Y$. Then $C\subseteq \Ve(Y)$ and this forces $Y$ to be a $\bP^2$, which is against our assumptions.
    
    \item {\bf $\pi_3(S)=Y_3$}: As in the previous case, we necessarily have $\iota([x_0])\simeq \bP^3$ containing both the surface $Y$ and $\pi_3(S)$, which can be either a curve or a surface. We can assume, moreover $Y_3=\pi_3(S)\subseteq Y$, since otherwise, proceeding as above, we would have $\iota([x_0])\subset X$. In particular, $Y$ is not a $2$-plane. If $Y_3$ is a curve we can obtain a contradiction as in the previous case by considering the cones with vertex $[z]\in Y_3$ spanned by the curves in $Y$ whose elements annihilate $[z]$. Hence, we can assume $Y=Y_3$ (and thus $\pi_3$ is generically finite). By construction, for any element $[y]\in Y$, there exists at least one element $[z]\in Y$ such that $yz=0$, hence, again, the line $\left\langle[y],[z]\right\rangle$ is contained in $\iota([x_0])\cap X$. 
    If, for $[y]$ general, at least one of these lines is not contained in $Y$, one can see that the whole $3$-space $\iota([x_0])$ is contained in $X$, yielding a contradiction. We can then assume that for $[y]\in Y$ general, the above-mentioned lines are contained in $Y$. Our aim is now to show that $Y\simeq\bP^2$, against our assumptions. First of all, let us show that the union $\cL$ of these lines as subset in the Grassmannian $\Gr(1,\bP^3)$ has dimension $2$. 
    If, by contradiction, $\dim(\cL)=1$, this would mean that for all $\ell\in\cL$ and for all $[y]\in \ell$ 
    there exists $[z]\in \ell$ with $yz=0$. In other words, this would yield a correspondence in $\bP^1\times\bP^1$, which intersects the diagonal $\Delta_{\bP^1}$ non trivially: the cubic $X$ would be singular, which is not possible. 
    Let us finally consider the incidence variety 
    $$\Psi:=\{(y,\ell) \ | \ y\in\ell\in\cL\}\subset Y\times \Gr(1,\bP^3)$$
    and denote by $\psi_1$ and $\psi_2$ the two projections. We have just shown that $\Ima(\psi_2)=\cL$ and, moreover, it is clear that if $\ell\in\cL$, then $\psi_2^{-1}(\ell)$ is described by $\ell$ itself, hence such a fiber has dimension $1$. Then $\dim(\Psi)=3$ and looking at the first projection $\psi_1$, we have that there exist infinitely many lines in $\cL$ contained in $Y$ and passing through the general point $[y]\in Y$. Hence, $Y$ has to be a cone with $[y]\in\Ve(Y)$: from the generality of $[y]$, it follows that $Y\simeq\bP^2$ as claimed.    
\end{itemize}
\end{proof}

\begin{remark}
Observe that if $n=2$ the hypotheses of Theorem can't be realized since the existence of a triangle implies that the locus $\cD_1(f)$ is non-empty. Hence, by Theorem \ref{THM:charactTS} the cubic $f$ is of TS type, against our assumption. 
\end{remark}


\section{Proof of main theorem: the cubic threefold case}
\label{SEC:star_CubicThreefoldCase}

In this section we state and begin to prove the main result of this article, namely

\begin{theorem}[Theorem A]
\label{THM:star}
Assume that $2\leq n\leq 5$ and consider $f\in \bK[x_0,\dots,x_n]$ defining a smooth cubic. Then,
the singular locus of the Hessian hypersurface $\cH_f\subset\bP^n$ has the expected dimension if and only if $f$ is not of TS type. In particular, $f\in \cV$ if and only if $\cH_f$ is irreducible and normal.    
\end{theorem}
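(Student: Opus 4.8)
The plan is to split Theorem~$A$ into its two implications and reduce both to a statement about $\dim\Sing(\cH_f)$. First I would observe that $\cH_f=V(h_f)$, being a hypersurface in $\bP^n$, is Cohen--Macaulay and in particular satisfies Serre's condition $S_2$; hence by Serre's criterion it is normal if and only if it is regular in codimension one, i.e.\ $\dim\Sing(\cH_f)\le n-3$. Since Theorem~\ref{THM:fundamental} always gives $\dim\Sing(\cH_f)\ge n-3$, normality of $\cH_f$ is equivalent to $\dim\Sing(\cH_f)=n-3$; and because a connected normal variety is irreducible while $\cH_f$, being a hypersurface in $\bP^n$ with $n\ge2$, is connected, normality already forces irreducibility. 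Conversely, if $f$ is of Thom--Sebastiani type then by Remark~\ref{REM:Pignolotti} one has $h_f=h_{f_1}h_{f_2}$, so $\cH_f=W_1\cup W_2$ is reducible and $\dim\Sing(\cH_f)=n-2>n-3$. Thus Theorem~$A$ becomes the single statement: for $2\le n\le5$ and $[f]\in\cU$, $\dim\Sing(\cH_f)=n-3$ if and only if $[f]\in\cV$, and only the implication $[f]\in\cV\Rightarrow\dim\Sing(\cH_f)=n-3$ remains to be proved.

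So I would assume $[f]\in\cV$ and, aiming for a contradiction, $\dim\Sing(\cH_f)=n-2$. By Theorem~\ref{THM:fundamental} we have $\Sing(\cH_f)=\cD_{n-1}(f)$, so there is an irreducible component $Z$ of $\cD_{n-1}(f)$ of dimension $n-2$; Lemma~\ref{LEM:triangfam} then produces an irreducible family of triangles $\cF\subseteq\cT$ dominating $Z$ via $\pi_1$, with $\pi_1$ generically finite and $\dim\cF=\dim\pi_1(\cF)=n-2$. Since $n\le5$ and $[f]\in\cV$, Theorem~\ref{THM:moon} applies, so the general triangle $T=([x],[y],[z])\in\cF$ has none of its vertices on $X$, i.e.\ $x^3\ne0$, $y^3\ne0$, $z^3\ne0$ in $A_f$. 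By Lemma~\ref{LEM:squareindep} the classes $x^2,y^2,z^2\in A^2$ are independent and $x,y,z$ span a $\bP^2$; together with $x^3,y^3,z^3\ne0$ this yields $\langle x,y,z\rangle\cap\Ann_{A^1}(x^2,y^2,z^2)=0$, while $\dim\Ann_{A^1}(x^2,y^2,z^2)=n-2$ by Gorenstein duality. This is the framework on which the rest of the proof rests.

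The contradiction should come from estimating the Zariski tangent space $T_{\cF,T}$ at a general $T$. Working in $A_f$, Lemma~\ref{LEM:TGVec_Triang_First_Order} identifies the tangent vectors to $\cT$ at $T$ with the triples $(x',y',z')$ satisfying $x'\in\Ann_{A^1}(y^2,z^2)/\langle x\rangle$, $y'\in\Ann_{A^1}(x^2,z^2)/\langle y\rangle$, $z'\in\Ann_{A^1}(x^2,y^2)/\langle z\rangle$ together with the coupling relations $xy'+yx'=0$, $yz'+zy'=0$, $zx'+xz'=0$ in $A^2$; and by Proposition~\ref{PROP:injtriangles} the differential $d_T\psi$ embeds $T_{\cF,T}$ into $\Hom(\langle x,y,z\rangle,A^1/\langle x,y,z\rangle)$. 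Read against the perfect pairing $A^1\times A^2\to A^3$, these relations constrain the kernels $\iota([u])$ as $[u]$ runs over the plane $\langle x,y,z\rangle$: if they are too large or overlap too much one obtains a $\bP^k$ contained in some $\cD_{k+1}(f)$, whence $f$ is of Thom--Sebastiani type by Theorem~\ref{THM:charactTS}, contradicting $[f]\in\cV$; otherwise the dimension count forces $\dim T_{\cF,T}\le n-3<n-2=\dim\cF$, which is again absurd. For the cubic surface ($n=3$) I expect this to be immediate and for the cubic threefold ($n=4$) to require only a manageable case analysis; both are treated in Section~\ref{SEC:star_CubicThreefoldCase}.

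The hard part will be the cubic fourfold case $n=5$, handled in Section~\ref{SEC:star_CubicFourfoldCase}. Here one has to break into many subcases according to the ranks of the multiplication maps $A^1\to A^2$ attached to the points $[u]\in\langle x,y,z\rangle$ and the relative position of the $2$-plane $\langle x,y,z\rangle$ with respect to $\cD_3(f)$ and $\cD_4(f)$; and, in contrast to the cases $n\le4$, a Thom--Sebastiani cubic fourfold need not be cyclic (cf.\ Example~\ref{EX:cub+cub}), which produces extra configurations. In most of these subcases the infinitesimal estimate above closes the argument, but there remains a family of cubic fourfolds (Lemma~\ref{LEM:diagonalizable:VeryBadCase}) for which $\dim T_{\cF,T}$ does not drop below $n-2$; for those I would, in the spirit of a Torelli statement, reconstruct the defining equation of $X=V(f)$ explicitly from the geometric data and then verify by a direct computation that $\dim\Sing(\cH_f)=2$ unless $f$ is of Thom--Sebastiani type. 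Assembling all cases yields the desired contradiction and completes the proof of Theorem~$A$ for all $2\le n\le5$.
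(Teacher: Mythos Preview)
Your overall architecture matches the paper's: reduce to showing that $[f]\in\cV$ implies $\dim\Sing(\cH_f)=n-3$, assume otherwise to obtain an $(n-2)$-dimensional family $\cF$ of triangles with $\pi_1$ generically finite, invoke Theorem~\ref{THM:moon} so that the general $T=([x],[y],[z])$ has no vertex on $X$, get the splitting $A^1=V_1\oplus V_2$ with $V_1=\langle x,y,z\rangle$ and $V_2=\Ann_{A^1}(x^2,y^2,z^2)$, and then run a case analysis. You also correctly identify that one exceptional subcase for $n=5$ requires reconstructing $f$ explicitly and computing $\dim\Sing(\cH_f)$ directly.

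However, two key technical ideas are missing, and your stated mechanism for closing the generic subcases is not the one that works. First, the paper's case analysis is not organized by ranks of multiplication maps from points of the plane $\langle x,y,z\rangle$, nor by the position of that plane relative to $\cD_k(f)$. Instead, since $d_T\pi_1:T_{\cF,T}\to V_2$ is an isomorphism, one forms the endomorphisms $\psi_m=d_T\pi_m\circ(d_T\pi_1)^{-1}:V_2\to V_2$ for $m\in\{2,3\}$, and the case analysis is governed by the \emph{Jordan normal form} of $\psi_2$ (and, when needed, $\psi_3$): for $n=4$ one distinguishes diagonalizable versus a single $2\times2$ Jordan block; for $n=5$ one distinguishes one $3\times3$ block, two blocks, and diagonalizable (with a further split according to how many eigenvalues vanish). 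Choosing the basis $\{u,v,w\}$ of $V_2$ adapted to this Jordan form is precisely what makes the first-order relations from Lemma~\ref{LEM:TGVec_Triang_First_Order} tractable.

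Second, the contradictions do \emph{not} come from an inequality $\dim T_{\cF,T}\le n-3$: at a general $T$ the tangent space to $\cF$ genuinely has dimension $n-2$, so no such estimate is available. What the paper does instead is push beyond first order. The first-order relations cut down the unknown coefficients appearing in $\psi_2,\psi_3$ (using that $x\cdot:V_2\to A^2$ is injective, Lemma~\ref{LEM:Ann_Triang}); then, because these relations hold for the \emph{general} $T\in\cF$, one deforms them along the tangent directions $\un{v},\un{v}',\dots$ to obtain second-order identities involving the first-order variations $u',u'',v',\dots$ of the Jordan basis. Iterating and combining these yields monomial identities in $A_f$ such as $u^2=0$ (a singular point of $V(f)$), or $xw^2=0$ after having already shown $xw^2\neq0$, or a $\bP^k\subseteq\cD_{k+1}(f)$ (hence $f$ of TS type by Theorem~\ref{THM:charactTS}). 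It is this propagation of identities through higher-order deformations, not a tangent-space dimension count, that closes each subcase; without it the argument does not go through even for $n=4$.
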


As we observe now, the assumptions on the degree and the smoothness of the hypersurface $X=V(f)$ are essential.
\begin{remark}
\label{REM:counterexamples}
    Let us stress that the result stated in Theorem \ref{THM:star} is false for smooth hypersurfaces of degree $d\geq 4$ and for non-smooth cubics. We provide here two simple examples proving these claims.
    \begin{itemize}
        \item Let $f(x,y,z)=x^4+y^4+z^4+x(y^3+z^3)$ and consider $C=V(f)$. Then one easily sees that $C$ is a smooth quartic plane curve and that
        $$h_f=\lambda\cdot yz\left( 8x^4 + 16x^3(y + z) + 32x^2yz - x(y^3+z^3) - 2yz(y^2 + z^2)\right).$$
        The quartic factor in the above factorization of $h_g$ yields a smooth quartic by the Jacobian criterion and thus, an irreducible one. This also implies that $g$ is not of TS type since, otherwise, we would have a completely decomposable hessian polynomial: there are smooth hypersurfaces of degree $d\geq 4$, which are not of TS type, with reducible Hessian variety.
        \item Let $f(x_0,x_1,x_2,x_3)=x_0x_1^2+x_1x_2^2+x_2x_3^2$ and consider the cubic surface $S=V(f)$. One can see that $S$ is an irreducible cubic surface whose singular locus coincides with the point $p_0=(1:0:0:0)$, which is a singularity of type $D_5$. Its associated Hessian variety is a reducible and non-reduced quartic surface $V(x_1^2(x_1x_2-x_3^2))$. Notice that the quadratic factor of the hessian polynomial $h_f$ is irreducible so, reasoning as in the previous example, one can see that $f\neq f_1(z_0,z_1)+f_2(z_2,z_3)$ for suitable coordinates $\{z_0,\dots,z_3\}$. With a direct and easy computation, also the cyclic case is ruled out: $f$ is not of TS type although its hessian is reducible (and thus non-normal) and non-reduced. The same phenomenon happens for the cuspidal cubic curve (see, for example, \cite{CO}). 
    \end{itemize}
    Nevertheless, not every type of singularity gives the same behaviour as in the last example above. Indeed, the nodal cubic curve and the $1$-nodal cubic surface $V(x_0(x_1^2+x_2^2+x_3^2)+x_1^2x_3+x_2x_3^2)$ have irreducible and normal associated Hessian variety (and thus they are not of TS type). One can easily construct examples of $1$-nodal cubic threefolds and fourfolds with the same property.
\end{remark}

Going back to the case of smooth cubic hypersurfaces, we make the following:
\begin{conjecture}
    The same result stated in Theorem \ref{THM:star} holds for smooth cubic hypersurfaces in $\bP^n$ for every $n\geq 2$. 
\end{conjecture}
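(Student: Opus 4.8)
The content of the conjecture beyond Theorem \ref{THM:star} lies entirely in the implication ``$[f]\in\cV$ $\Rightarrow$ $\cH_f$ is normal and irreducible'', since the reverse implication (TS type forces reducibility and $\dim\Sing(\cH_f)=n-2$) is established in every dimension in Remark \ref{REM:Pignolotti}. So the plan is to argue by contradiction exactly as for $n\le5$: assume $[f]\in\cV$ but $\dim\Sing(\cH_f)=n-2$. By Theorem \ref{THM:fundamental} one has $\Sing(\cH_f)=\cD_{n-1}(f)$, and by Remark \ref{REM:Non_Normal_BigFamilyofTriangles} together with the last assertion of Lemma \ref{LEM:triangfam} there is an irreducible family of triangles $\cF$ with $\dim(\cF)=\dim(\pi_1(\cF))=n-2$ and $\pi_1$ generically finite onto an $(n-2)$-dimensional component of $\cD_{n-1}(f)$. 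The ultimate goal is to produce, for some $k$, a $\bP^k\subseteq\cD_{k+1}(f)$, which by Theorem \ref{THM:charactTS} contradicts $[f]\in\cV$. Thus the two pillars that must be extended from the range $n\le5$ are Theorem \ref{THM:moon} and the ensuing infinitesimal computation.

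\smallskip

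The first task is to generalise Theorem \ref{THM:moon}: the general triangle of $\cF$ should have no vertex on $X$. For a general $[x]\in Y_1=\pi_1(\cF)$ the fibre $\iota([x])$ is a line, so a triangle is recovered from the pair $[y],[z]\in\iota([x])$ with $yz=0$, and the whole difficulty is controlling $\cF_c=\cF\cap(X\times X\times X)$ and the loci where a single vertex lands on $X$. The argument for $n\le5$ is case by case and rests on the fact, recalled in Remark \ref{RMK:planesin34folds}, that a smooth cubic of dimension at most $4$ carries no positive-dimensional family of $2$-planes. \textbf{This is where the principal obstacle appears}: for $n\ge6$ the Fano scheme of $2$-planes in a smooth cubic $(n-1)$-fold has dimension $3n-16>0$, so the finiteness input of Remark \ref{RMK:planesin34folds} is lost and the cone-sweeping arguments of Theorem \ref{THM:moon} no longer terminate. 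To replace them I would bound $\dim\cF_c$ and the per-vertex incidence loci directly, combining Lemma \ref{LEM:bound}$(a)$ (a projection contained in $X$ has dimension $\le n-3$) with a dimension estimate for the incidence between the lines $\iota([x])$ and the Fano scheme of planes of $X$, the aim being to show that triangles with a vertex on $X$ still form a proper closed subset of $\cF$. A clean uniform statement of this kind — rather than the exhaustive case analysis — is the key technical advance required at this step.

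\smallskip

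Granting that the general $T=([x],[y],[z])\in\cF$ satisfies $x^3,y^3,z^3\neq0$, I would then run the infinitesimal analysis uniformly. By Lemma \ref{LEM:squareindep} the squares $x^2,y^2,z^2$ are independent in $A^2$, and a short pairing computation gives the clean decomposition $A^1=\langle x,y,z\rangle\oplus\Ann_{A^1}(x^2,y^2,z^2)$, with $\dim\Ann_{A^1}(x^2,y^2,z^2)=n-2$. By Lemma \ref{LEM:TGVec_Triang_First_Order} a tangent vector maps under $d\pi_1$ to $x'\in\Ann_{A^1}(y^2,z^2)/\langle x\rangle$, a space of dimension $n-2$; since $\pi_1$ is generically finite with $\dim Y_1=n-2$, the differential $d\pi_1$ is injective at the general point and therefore forces the exact identity $T_{Y_1,[x]}=\Ann_{A^1}(y^2,z^2)/\langle x\rangle$ on a dense open subset of $Y_1$. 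The plan is to integrate this pointwise tangent description: the fact that the tangent space to $\cD_{n-1}(f)$ at a general point is pinned to the annihilator of the squares of the remaining two vertices should certify that some degeneracy locus reached by the vertices contains a linear space of the appropriate dimension, after which Theorem \ref{THM:charactTS} closes the contradiction. For $n\le5$ this integration is carried out by the explicit tangent-space computations; for general $n$ one hopes the universal validity of the identity above suffices to establish linearity, but this implication itself must be proved.

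\smallskip

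I expect the \textbf{dominant difficulty} to be twofold, and to account for why the statement is only conjectural. First, the generalisation of Theorem \ref{THM:moon} genuinely changes character at $n=6$, where $X$ begins to contain moving $2$-planes, so a structural input must supersede the finiteness used for $n\le5$. Second, even granting that reduction, the tangent-space analysis already fragments into many sub-cases at $n=5$ and includes the degenerate family of Lemma \ref{LEM:diagonalizable:VeryBadCase}, where infinitesimal data are insufficient and the equation of $f$ must be reconstructed by hand. A proof valid for all $n$ would require these ``bad'' strata to be organised into a single structural statement — for instance, a classification of $(n-2)$-dimensional families of planes $P\subset\bP(A^1)$ on which the apolar pairing $A^1\times A^1\to A^2$ is diagonalised by three independent squares — and such a classification is, at present, not available.
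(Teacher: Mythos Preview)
The statement is a \emph{conjecture}: the paper does not prove it and says, immediately after stating it, that ``the techniques used in the proof of Theorem~\ref{THM:star} do not seem to adapt to an argument that could be valid in any dimension.'' Your proposal is, accordingly, not a proof either but an analysis of what would be required, and in that respect it agrees with the paper's own position. You correctly isolate the open implication (the forward one being Remark~\ref{REM:Pignolotti}), correctly set up the contradiction framework via Remark~\ref{REM:Non_Normal_BigFamilyofTriangles} and Lemma~\ref{LEM:triangfam}, and correctly locate the two breaking points of the $n\le 5$ argument: the proof of Theorem~\ref{THM:moon} relies on the finiteness of $2$-planes on $X$ (Remark~\ref{RMK:planesin34folds}), which fails once $n\ge 6$ since the Fano scheme of planes then has dimension $3n-16>0$; and the infinitesimal analysis of Sections~\ref{SEC:star_CubicThreefoldCase}--\ref{SEC:star_CubicFourfoldCase} is an exhaustive case split whose complexity already forces an ad~hoc reconstruction of $f$ at $n=5$ (Lemma~\ref{LEM:diagonalizable:VeryBadCase}).

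Since neither you nor the paper supplies an argument closing these gaps, there is nothing further to compare: what you have written is a correct diagnosis of why the conjecture is open, not a proof, and the paper offers no proof against which to measure it.
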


The techniques used in the proof of Theorem \ref{THM:star} do not seem to adapt to an argument that could be valid in any dimension: already for cubic fourfolds one can see the large amount of cases one has to consider. For this reason, we will give the proof of the Theorem for $n\leq 4$ at the end of this section, while the case of cubic fourfolds is treated in the subsequent one, since it is more involved, even if the techniques are similar. \\
\smallskip

We stress that the implication 
$$\cH_f \mbox{ irreducible and normal} \Longrightarrow f\in \cV$$
is always true for all $n\geq 2$ as we have seen in Remark \ref{REM:Pignolotti}: the hard part of the conjecture is to prove the other implication.
\smallskip

Let us now explain the strategy that will be used for the proof of the Theorem for the various values of $n\leq 5$.

\begin{framework}
\label{strategy}
Assume that $f$ defines a smooth cubic $X=V(f)$ which is not of TS type (i.e. $[f]\in \cV$) such that $\cH_f$ is not normal. Then by Lemma \ref{LEM:triangfam} and Remark \ref{REM:Non_Normal_BigFamilyofTriangles}, we have a family $\cF$ of dimension $n-2$ of triangles for $\cH_f$ dominating, via the first projection, a component $Y_1$ of $\cD_{n-1}(f)$ of the same dimension. As we have done in the previous sections, we denote by $Y_i$ the images of the projections $\pi_i$. As just observed, $\pi_1:\cF\to Y_1$ is generically finite. 
\smallskip

If $T=([x],[y],[z])=([x_1],[x_2],[x_3])$ is a general point of $\cF$, by generic smoothness, we can assume that the differentials $d_T\pi_i:T_{\cF,T}\rightarrow T_{Y_i,[x_i]}$ are surjective; in particular, $d_T\pi_1$ is an isomorphism.
Moreover, since we are assuming $n\leq 5$, by Theorem \ref{THM:moon}, we have that none of the vertices of $T$ belongs to the cubic $V(f)$, i.e. $x_i^3\neq 0$ for $i\in \{1,2,3\}$. If we set
\begin{equation}
\label{EQ:defV1V2}
V_1=\left\langle x,y,z\right\rangle\subset A^1 \qquad \mbox{and}\qquad V_2=\Ann_{A^1}(x^2,y^2,z^2)\subset A^1,
\end{equation}
by Lemma \ref{LEM:squareindep} and by Gorenstein duality, we have that $\dim_{\bK}(V_1)=3$ and $\dim_{\bK}(V_2)=n-2$. 
Moreover, since $x^3,y^3,z^3\neq 0$, we also have $V_1\cap V_2=\{0\}$. Hence, by dimension reason, one has \begin{equation}
\label{EQ:DecoA1V1sumV2}
A^1=V_1\oplus V_2.
\end{equation}
If $\{i,j,k\}=\{1,2,3\}$, by Lemma \ref{LEM:squareindep} and Lemma \ref{LEM:TGVec_Triang_First_Order}, one has also
\begin{equation}
\dim_{\bK}\Ann_{A^1}(x_j^2,x_k^2)=n-1\qquad  \dim_{\bK}\Ann_{A^1}(x_j^2,x_k^2)/\langle x_i\rangle=n-2.
\end{equation}
By dimension reason and since $x_i\not \in V_2$ we have a canonical isomorphism $V_2\simeq \Ann_{A^1}(x_j^2,x_k^2)/\langle x_i\rangle$ induced by the inclusion $V_2\hookrightarrow \Ann_{A^1}(x_j^2,x_k^2)$ followed by the quotient by $\langle x_i\rangle$.
By Lemma \ref{LEM:TGVec_Triang_First_Order} and since we have that $d_{T}\pi_i$ surjective, we have
$$T_{Y_i,[x_i]}\subseteq \Ann_{A^1}(x_j^2,x_k^2)/\langle x_i\rangle \simeq V_2$$
so we can interpret $d_T{\pi_i}$ as maps $T_{\cT,T}\to V_2$. Being $d_T\pi_1$ an isomorphism, we have then the endomorphisms 
\begin{equation}
\psi_m=d_T\pi_m\circ (d_T\pi_1)^{-1}:V_2\to T_{Y_m,[x_m]}\hookrightarrow V_2\qquad \mbox{for } m\in \{2,3\}
\end{equation}
In the proof of Theorem \ref{THM:star}, we will start by analyzing these specific maps, ruling out both the cases where $\psi_2$ (or $\psi_3$) can be diagonalized or not and ultimately proving that a family of triangles of dimension $n-2$ can not exist.
\smallskip

As we have done in the proof of Lemma \ref{LEM:TGVec_Triang_First_Order}, to a tangent vector $\un{v}=(x',y',z')\in T_{\cT,T}$, we can associate the "first order deformation" of $T$ in the direction of $\un{v}$ (for brevity, $\un{v}$-deformation of $T$), which we write in a compact way as 
\begin{equation}
T+\un{v}=(x+tx',y+ty',z+tz').
\end{equation}
\end{framework}

\begin{lemma}
\label{LEM:Ann_Triang}
Let $\cF$ be a family of triangles. Assume furthermore that both $\cF$ and $Y_1$ have dimension $n-2$. If the general element $T=([x],[y],[z])\in \cF$ has no vertices on $X=V(f)$, then $x\cdot :V_2\to A^2$ is injective.
\end{lemma}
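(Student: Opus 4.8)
The plan is to argue by contradiction, supposing that $x\cdot : V_2 \to A^2$ has a non-trivial kernel, and then to produce a first-order deformation of the triangle $T$ that $d_T\pi_1$ sends to zero, contradicting the fact that $d_T\pi_1$ is an isomorphism. So suppose there is $0 \neq w \in V_2$ with $xw = 0$. The idea is to build a tangent vector $\underline{v} = (x', y', z') \in T_{\cT, T}$ whose first component $x'$ is, modulo $\langle x \rangle$, a non-zero multiple of $w$ (hence non-zero in $A^1/\langle x\rangle$), while still satisfying the defining relations of $T_{\cT,T}$ from Lemma \ref{LEM:TGVec_Triang_First_Order}: namely $x_i x_j' + x_j x_i' = 0$ for all $i \neq j$, together with $x_i' \in \Ann_{A^1}(x_j^2, x_k^2)/\langle x_i \rangle$.

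First I would try the simplest guess $\underline{v} = (w, 0, 0)$, i.e. deform only the vertex $[x]$ in the direction of $w$, keeping $[y]$ and $[z]$ fixed. The relations to check are $x_1 x_2' + x_2 x_1' = x_2 w = yw$ and $x_1 x_3' + x_3 x_1' = x_3 w = zw$; these are not automatically zero, so $(w,0,0)$ need not be tangent to $\cT$. However, $w \in V_2 = \Ann_{A^1}(x^2,y^2,z^2)$, so $y^2 w = 0$ and $z^2 w = 0$, which says $yw$ is annihilated by $y$ and $zw$ by $z$ in the apolar ring; combined with $xw = 0$ one can hope to correct $(w,0,0)$ by adding suitable terms in the other two components. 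Concretely, I expect that one can solve for $y', z' \in A^1$ with $y w + x y' = 0$ and $z w + x z' = 0$: these are conditions of the shape ``$x \cdot (-) = $ (a fixed element)'', solvable precisely because $yw \in \Ima(x \cdot : A^1 \to A^2)$ — and here is where one uses that $[x] \notin X$, i.e. $x^3 \neq 0$, together with Gorenstein duality to control the image of multiplication by $x$. One must then also verify the remaining relation $x_2 x_3' + x_3 x_2' = y z' + z y' = 0$ and the annihilator conditions on $y', z'$; I would check these by multiplying through by $y$ and $z$ and using $xy = yz = zx = 0$ and the fact that $w$ kills all the squares.

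The main obstacle, and the step deserving the most care, is precisely showing that the auxiliary elements $y', z'$ can be chosen so that the full system of equations cutting out $T_{\cT,T}$ is satisfied — in other words, that the ``obstruction'' to extending the naive deformation $(w,0,0)$ to an honest tangent vector vanishes. This is a linear-algebra computation in $A_f$ that hinges on the decomposition $A^1 = V_1 \oplus V_2$ from Framework \ref{strategy}, on Gorenstein (Poincaré) duality to identify $\Ima$ and $\Ann$ of the various multiplication maps, and crucially on $x^3, y^3, z^3 \neq 0$, which is exactly what Theorem \ref{THM:moon} guarantees for the general $T \in \cF$ under the stated hypotheses. Once the tangent vector $\underline{v}$ is constructed with $x' \equiv w \not\equiv 0$ in $A^1/\langle x\rangle$, we get $d_T\pi_1(\underline{v}) = w \neq 0$ but — by tracking that the construction forces $\underline{v}$ to lie in the kernel of the composite or, more simply, by a dimension count showing such a $\underline{v}$ cannot exist in the $(n-2)$-dimensional space $T_{\cF,T}$ on which $d_T\pi_1$ is injective — we reach the desired contradiction, so $x \cdot : V_2 \to A^2$ is injective.
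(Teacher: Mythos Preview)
Your proposal has a genuine gap at its conclusion, and the overall strategy is far more complicated than what is needed.

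The logical issue is this: you construct (or attempt to construct) a tangent vector $\underline{v}\in T_{\cT,T}$ to the full variety of triangles $\cT$ whose first component is $w$. But the isomorphism $d_T\pi_1:T_{\cF,T}\to T_{Y_1,[x]}$ from Framework~\ref{strategy} is a statement about the specific family $\cF$, not about $\cT$. There is no reason your $\underline{v}$ should lie in $T_{\cF,T}$, so neither of your two proposed contradictions fires: the ``dimension count'' only constrains $T_{\cF,T}$, and you never identify a composite for which $\underline{v}$ lies in the kernel. In fact your opening sentence announces a plan to produce a vector \emph{sent to zero} by $d_T\pi_1$, but your construction produces one sent to $w\neq 0$; the argument never recovers from this mismatch. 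The intermediate step of solving $xy'=-yw$ and $xz'=-zw$ is also left unjustified (and would require knowing the image of $x\cdot:A^1\to A^2$, which is essentially what is at stake).

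The paper's argument avoids tangent spaces entirely and is a three-line observation. Since $\dim(Y_1)=n-2$ while $\dim(\cD_{n-2}(f))\leq n-3$ by Proposition~\ref{PROP:Comp_Of_Gamma}(b), the general $[x]\in Y_1$ lies in $\cD_{n-1}(f)\setminus\cD_{n-2}(f)$, i.e.\ $\ker(x\cdot:A^1\to A^2)$ has dimension exactly $2$. As $xy=xz=0$, this kernel equals $\langle y,z\rangle\subseteq V_1$. The hypothesis that no vertex lies on $X$ gives $V_1\cap V_2=\{0\}$ (as in Framework~\ref{strategy}), so $V_2\cap\ker(x\cdot)=\{0\}$ and $x\cdot:V_2\to A^2$ is injective. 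The role of $\dim(Y_1)=n-2$ is thus not to make $d_T\pi_1$ an isomorphism, but to pin down the rank of $H_f(x)$ at the general point.
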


\begin{proof}
Since $Y_1$ has dimension $n-2$ its general point $[x]$ is in $\cD_{n-1}(f)\setminus \cD_{n-2}(f)$, i.e. the kernel of the multiplication map $x\cdot :A^1\to A^2$ has dimension $2$. The point $[x]$ is also the vertex of an element $T=([x],[y],[z])$ of $\cF$, and thus $\ker(x\cdot)=\langle y,z\rangle\subseteq V_1$. On the other hand, by the assumptions, one has $V_1\cap V_2={0}$ as observed above. 
\end{proof}


\bigskip
Let us now show the Theorem \ref{THM:star} in the first cases.

\begin{proposition}\label{PROP:SURFACE}
Theorem \ref{THM:star} is true for $n\in\{2,3\}$.  
\end{proposition}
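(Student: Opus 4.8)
As noted right after Theorem~\ref{THM:star}, the implication ``$\cH_f$ irreducible and normal $\Rightarrow[f]\in\cV$'' holds in every dimension by Remark~\ref{REM:Pignolotti}, and a TS polynomial has $\dim\Sing(\cH_f)=n-2\neq n-3$ by the same remark. So the plan is to show, for $n\in\{2,3\}$, that $[f]\in\cV$ forces $\Sing(\cH_f)$ to have the expected dimension $n-3$, and then to read off that $\cH_f$ is irreducible and normal. The case $n=2$ is immediate: the expected dimension is $-1$, i.e.\ we must check $\Sing(\cH_f)=\emptyset$, and by Theorem~\ref{THM:fundamental}(a) one has $\Sing(\cH_f)=\cD_{n-1}(f)=\cD_1(f)$; were this non-empty it would contain a point, that is a $\bP^0$, and Theorem~\ref{THM:charactTS} applied with $k=0$ would give $f=f_1(x_0)+f_2(x_1,x_2)$, contradicting $[f]\in\cV$. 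Hence $\cH_f$ is a smooth plane cubic, in particular irreducible and normal.

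For $n=3$ I would argue by contradiction. Since $\dim\Sing(\cH_f)\in\{0,1\}$ by Proposition~\ref{PROP:Comp_Of_Gamma}, ``$\cH_f$ not normal'' means $\dim\Sing(\cH_f)=1=n-2$; assume this. By Remark~\ref{REM:Non_Normal_BigFamilyofTriangles} and Lemma~\ref{LEM:triangfam} there is an irreducible family $\cF$ of triangles for $\cH_f$ with $\dim\cF=\dim\pi_1(\cF)=n-2=1$ and $\pi_1$ generically finite onto a $1$-dimensional component of $\cD_{n-1}(f)$, i.e.\ exactly the situation of Framework~\ref{strategy}. Since $n\le5$ and $[f]\in\cV$, Theorem~\ref{THM:moon} applies, so the general triangle $T=([x],[y],[z])\in\cF$ has no vertex on $X=V(f)$; consequently $A^1=V_1\oplus V_2$ with $V_1=\langle x,y,z\rangle$ of dimension $3$ and $V_2=\Ann_{A^1}(x^2,y^2,z^2)$ of dimension $n-2=1$. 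Fix a generator $w$ of $V_2$. Because $d_T\pi_1$ is an isomorphism, the endomorphisms $\psi_2,\psi_3\colon V_2\to V_2$ of the Framework are simply multiplications by scalars $\lambda_2,\lambda_3\in\bK$.

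Next I would extract the contradiction from a single tangent vector. Take $\un v=(x',y',z')\in T_{\cF,T}$ with $d_T\pi_1(\un v)=w$; one may use $x'=w$, $y'=\lambda_2 w$, $z'=\lambda_3 w$ as representatives inside $V_2\subset A^1$. Lemma~\ref{LEM:TGVec_Triang_First_Order} then yields, in $A^2$,
$$\lambda_2(xw)+yw=0,\qquad \lambda_3(xw)+zw=0,\qquad \lambda_3(yw)+\lambda_2(zw)=0,$$
and substituting the first two relations into the third gives $2\lambda_2\lambda_3(xw)=0$. By Lemma~\ref{LEM:Ann_Triang} the multiplication $x\cdot\colon V_2\to A^2$ is injective, so $xw\neq0$ and hence $\lambda_2\lambda_3=0$; up to relabelling the vertices, say $\lambda_2=0$. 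Then $\psi_2=0$, so $d_T\pi_2=\psi_2\circ d_T\pi_1=0$; since $d_T\pi_2$ is onto $T_{Y_2,[y]}$ by generic smoothness, $\dim Y_2=0$. As $\dim Y_1=n-2$, this contradicts Lemma~\ref{LEM:bound}(b) because $[f]\in\cV$. Thus $\cH_f$ is normal, i.e.\ $\dim\Sing(\cH_f)=0$. Finally, a repeated irreducible factor of $h_f$ would place an $(n-1)$-dimensional subvariety inside $\Sing(\cH_f)$, and two coprime non-constant factors of $h_f$ would meet in a subvariety of $\Sing(\cH_f)$ of dimension $\ge n-2=1$; both being impossible, $h_f$ is irreducible and reduced. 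Since then $\Sing(\cH_f)$ has codimension $2$ in the surface $\cH_f$ and hypersurfaces are Cohen--Macaulay, Serre's criterion gives that $\cH_f$ is normal, completing both cases.

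The substantive input is Theorem~\ref{THM:moon}, which makes the splitting $A^1=V_1\oplus V_2$ available; after that the low dimension forces $V_2$ to be a line, so $\psi_2,\psi_3$ are scalars and the whole argument collapses to the single identity $2\lambda_2\lambda_3(xw)=0$. The genuine difficulty only surfaces for $n\in\{4,5\}$, where $\dim V_2=n-2\ge2$ and one must instead decide whether $\psi_2$ (or $\psi_3$) is diagonalizable — which is precisely the content of the subsequent sections.
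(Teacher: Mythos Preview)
Your proof is correct and follows the same overall framework as the paper: reduce $n=2$ to $\cD_1(f)=\emptyset$ via Theorem~\ref{THM:charactTS}, and for $n=3$ set up Framework~\ref{strategy} with $\dim V_2=1$ and a single tangent vector producing the three first-order relations. The endgame for $n=3$ is, however, genuinely different. From the relations $\lambda_2(xw)+yw=0$, $\lambda_3(xw)+zw=0$, $\lambda_3(yw)+\lambda_2(zw)=0$ the paper first invokes Lemma~\ref{LEM:bound} to ensure all three scalars are nonzero, then reads off that $\lambda_2 x+y,\ \lambda_3 x+z,\ \lambda_3 y+\lambda_2 z$ are three independent elements of $\ker(w\cdot)$, so $[w]\in\cD_1(f)$ and Theorem~\ref{THM:charactTS} concludes. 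You instead substitute to obtain $2\lambda_2\lambda_3(xw)=0$, use Lemma~\ref{LEM:Ann_Triang} to kill a scalar, and contradict Lemma~\ref{LEM:bound}(b). Your route is a bit more economical and, notably, mirrors exactly the strategy the paper adopts for $n=4,5$ (extract coefficient relations from injectivity of $x\cdot$ on $V_2$, then force a forbidden degeneracy of $\psi_i$); the paper's direct argument that the $V_2$-generator lands in $\cD_1(f)$ is specific to $\dim V_2=1$ and does not generalize. Either way the conclusion and the ingredients used are the same.
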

\begin{proof}
By contradiction, let us assume that $[f]\in\cV$ and $\dim(\Sing(\cH_f))=n-2$. Notice that, if $n=2$, since $\Sing(\cH_f)=\cD_1(f)$, we have a contradiction by Theorem \ref{THM:charactTS}. Hence, we can assume $n=3$. \smallskip

Fix the notation explained in the framework (see \ref{strategy}). Since $n=3$, we have that 
$$\Ann_{A^1}(x^2,y^2,z^2)=V_2=\langle u\rangle$$ for suitable $u\in A^1\setminus\{0\}$. 
All projections of $\cF$ have dimension exactly $1$ by Lemma \ref{LEM:bound} so $T_{\cF,T}=\langle u(A,B,C)\rangle$ with $A,B,C\in \bK^*$. By Lemma \ref{LEM:TGVec_Triang_First_Order} the associated first order deformation $T+u(A,B,C)$ has to satisfy
$$Bux+Auy=0, \qquad Cux+Auz=0, \qquad Cuy+Buz=0.$$
We can then observe that the three independent points $Bx+Ay, \ Cx+Az, \ Cy+Bz$ belong to the kernel of the multiplication by $u$. In other words, $\iota(u)\supseteq \bP^2$, so that $[u]\in \cD_1(f)$. Hence, as before, we have a contradiction.
\end{proof}

\begin{proposition}
\label{PROP:3fold}
Theorem \ref{THM:star} is true for $n=4$: for a smooth cubic threefold $X=V(f)$, the Hessian quintic threefold $\cH_f$ is normal if and only if $f$ is not of TS type.
\end{proposition}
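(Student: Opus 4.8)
The plan is to argue by contradiction using the framework laid out in \ref{strategy}, following the same template as the surface case but now with $V_2$ two-dimensional. So assume $[f]\in\cV$ with $X=V(f)$ a smooth cubic threefold and $\cH_f$ not normal; then $\dim(\Sing(\cH_f))=n-2=2$ and by Lemma \ref{LEM:triangfam} and Remark \ref{REM:Non_Normal_BigFamilyofTriangles} we get an irreducible family $\cF$ of triangles with $\dim(\cF)=\dim(Y_1)=2$, with $\pi_1$ generically finite. Take a general $T=([x],[y],[z])\in\cF$; by Theorem \ref{THM:moon} none of the vertices lies on $X$, so $x^3,y^3,z^3\neq 0$, we have the splitting $A^1=V_1\oplus V_2$ with $\dim V_1=3$, $\dim V_2=2$, and the endomorphisms $\psi_2,\psi_3\colon V_2\to V_2$. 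By Lemma \ref{LEM:Ann_Triang} the multiplication map $x\cdot\colon V_2\to A^2$ is injective, and likewise for $y$ and $z$ by symmetry (the roles of the three vertices can be permuted inside the argument).

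The main step is to extract constraints from Lemma \ref{LEM:TGVec_Triang_First_Order} applied to tangent vectors of $\cF$. Writing a general tangent vector as $\un v=((d_T\pi_1)^{-1}$ applied to $w\in V_2)$, its components are $x'\equiv w$, $y'\equiv\psi_2(w)$, $z'\equiv\psi_3(w)$ modulo $\langle x\rangle,\langle y\rangle,\langle z\rangle$ respectively. The first-order relations $x_ix_j'+x_jx_i'=0$ then become, after lifting representatives in $V_2$ and collecting the $\langle x\rangle,\langle y\rangle,\langle z\rangle$ correction terms, identities of the form $x\,\psi_2(w)+y\,w=(\text{scalar})\,x^2+(\text{scalar})\,y^2$ in $A^2$, and cyclically. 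The plan is to pick $w$ to be an eigenvector of $\psi_2$ (the case analysis splits according to whether $\psi_2$, equivalently $\psi_3$, is diagonalizable over $\bK$ or has a single Jordan block — this is exactly the dichotomy anticipated in \ref{strategy}). For a common eigenvector $w$ with $\psi_2(w)=\mu w$, the relation gives $(x\mu + y)w\in\langle x^2,y^2\rangle$, and one shows that this, together with the analogous relations from the $(1,3)$ and $(2,3)$ pairs, forces a linear combination of $x,y,z$ to annihilate $w$; since $w\neq 0$ and $w\in V_2$, that linear combination lies in $V_1\cap(\text{kernel})$ and one checks it cannot be zero, producing a point of $\cD_1(f)$ (an element whose square is built so as to vanish, or more precisely a point $[u]$ with $\iota([u])$ of dimension $\geq 3$). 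This contradicts Theorem \ref{THM:charactTS} exactly as in Proposition \ref{PROP:SURFACE}, the only new work being to handle the $2$-dimensional $V_2$ and the possibly non-semisimple $\psi_2$.

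Concretely, in the diagonalizable case one has a basis $u_1,u_2$ of $V_2$ of simultaneous eigenvectors of $\psi_2$ and $\psi_3$ (they commute with $(d_T\pi_1)^{-1}$ pushforwards in a way that makes simultaneous diagonalization available, or one treats $\psi_2$ and $\psi_3$ separately), and one runs the relation above on each $u_i$; in the non-diagonalizable case one uses the single eigenvector $u_1$ plus the generalized eigenvector $u_2$ with $\psi_2(u_2)=\mu u_2+u_1$, which produces one extra relation of the shape $x u_1 + (\text{stuff})\in\langle x^2,y^2\rangle$ that is handled the same way. In every branch the output is a nonzero element $[u]\in\bP(A^1)$ whose multiplication operator $u\cdot\colon A^1\to A^2$ has rank $\leq 1$, i.e. $[u]\in\cD_1(f)$, contradicting that $f$ is not of TS type via Theorem \ref{THM:charactTS} (which says $\cD_1(f)\neq\emptyset$ would make $f$ of TS type with $k=0$, cyclic).

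The hard part, and the reason the fourfold case needs its own section, is bookkeeping the scalar correction terms (the coefficients of $x^2,y^2,z^2$ coming from the quotients $A^1/\langle x_i\rangle$) and checking that the linear combination of $x,y,z$ one produces is genuinely nonzero rather than accidentally trivial — that is where the injectivity statements of Lemma \ref{LEM:Ann_Triang}, the independence of $x^2,y^2,z^2$ from Lemma \ref{LEM:squareindep}, and the fact that $x,y,z$ span a genuine $\bP^2$ all get used in combination. For $n=4$ this is short because $\dim V_2=2$ leaves very little room; the proof should fit in roughly the length of Proposition \ref{PROP:SURFACE} plus the eigenvalue case split.
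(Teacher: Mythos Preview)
Your outline correctly identifies the setup and the diagonalizable/non-diagonalizable case split, but the core step is too optimistic and does not go through as stated.

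First, a minor confusion: there are no ``scalar correction terms'' of the kind you describe. Since $x_ix_j=0$ for $i\neq j$, the product $x_jx_i'$ is unchanged if one adds a multiple of $x_i$ to $x_i'$; hence the relations $x_ix_j'+x_jx_i'=0$ of Lemma~\ref{LEM:TGVec_Triang_First_Order} hold \emph{exactly} in $A^2$ once the representatives of $x_i'$ are taken in $V_2$ (which is precisely what the canonical isomorphism $V_2\simeq\Ann_{A^1}(x_j^2,x_k^2)/\langle x_i\rangle$ provides). So the equations are genuine identities such as $(Ax+y)u=0$, not congruences modulo $\langle x^2,y^2\rangle$.

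The serious gap is the claim that first-order relations alone produce a point of $\cD_1(f)$. For $n=4$, membership in $\cD_1(f)$ means kernel of dimension at least $4$, so you would need four independent linear forms annihilating your candidate element. If $u$ is an eigenvector of $\psi_2$ with eigenvalue $A$, the $(x,y)$-relation gives only $(Ax+y)u=0$; the $(x,z)$-relation reads $(Cx+z)u+D\,xw=0$ and involves the other basis vector $w$ unless $D=0$, which you have no right to assume. Your assertion that $\psi_2$ and $\psi_3$ are simultaneously diagonalizable is unjustified --- the paper never claims or uses this. Even in the most favorable scenario you extract at best two independent annihilators of $u$ from first-order data, placing $[u]$ in $\cD_3(f)$, which yields nothing.

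The paper's argument is substantially longer than your estimate. After writing down the first-order relations it uses injectivity of $x\cdot|_{V_2}$ to obtain polynomial constraints among the coefficients $A,\dots,F$; it then proves a separate sub-lemma (by a geometric argument on the fibers of $\pi_2,\pi_3$ together with Theorem~\ref{THM:charactTS}) that neither eigenvalue of $\psi_2$ can vanish, forcing $B=-A\neq 0$ and $C=F=0$. The decisive step is to deform the \emph{first-order relations themselves} along $\cF$ --- i.e.\ to compute second-order information, tracking how $u,w,A,D,E$ vary with $T$ --- and from these to derive a cascade of vanishings ($xuw=zu^2=zw^2=u^2w=uw^2=0$, \dots) culminating in $xw^2=0$, which contradicts an earlier nonvanishing. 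The non-diagonalizable case similarly forces $A=0$, reduces to $\psi_3$ diagonalizable (already excluded) unless further coefficients vanish, and again finishes via second-order deformation leading to $u^2=0$. None of this second-order machinery appears in your sketch, and it is genuinely required: the surface-case trick of reading off a large kernel directly does not scale once $\dim V_2=2$.
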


\begin{proof}
By contradiction, let us assume that $[f]\in\cV$ and $\dim(\Sing(\cH_f))=2$. Then we are in the situation described in \ref{strategy}: $T=([x],[y],[z])$ will denote a general triangle in $\cF$ (recall that by Theorem \ref{THM:moon} none of its vertices belongs to the cubic $X=V(f)$). 
Since $n=4$, we have that $\dim(V_2)=2$. Recall that we have the endomorphisms 
$$\psi_i=d_T\pi_i\circ (d_T\pi_1)^{-1}:V_2\simeq T_{Y_1,[x]}\to V_2\qquad i\in \{2,3\}$$
which have image $T_{Y_2,[y]}$ and $T_{Y_3,[z]}$, respectively. We have that either one of the two is diagonalizable or that none is. We treat differently the two cases.
\smallskip

{\bf{Case (I)}}: Let us suppose that at least one of the two above endomorphisms is diagonalizable. W.l.o.g. we can assume that $\{u,w\}$ is a basis of $V_2$ whose elements are eigenvectors for $\psi_2$. Then two independent tangent vectors to $\cF$ in $T$ are given as
\begin{equation}
\label{EQ:3fold_defv}
\un{v}=(u,Au,Cu+Dw) \qquad\mbox{ and }\qquad  \un{v}'=(w,Bw,Eu+Fw)
\end{equation}
for suitable $A,B,C,D,E,F$ scalars depending on the triangle.
\smallskip

We recall that, for a given tangent vector $\un{v}=(x',y',z')$ at $T=([x],[y],[z])$, we have the relation $xy'+yx'=0$ by Lemma \ref{LEM:TGVec_Triang_First_Order}. For brevity, we refer to this relation with the notation $(\un{v})_{xy}$. We denote by $(\un{v})_{xz}$ and $(\un{v})_{yz}$ the analogous relations.
\smallskip

For example, using the vectors in \eqref{EQ:3fold_defv} we have the corresponding first order deformations
$$T+t\un{v}=(x+tu,y+tAu,z+t(Cu+Dw))\qquad T+s\un{v}'=(x+sw,y+sBw,z+s(Eu+Fw))$$
which yield
\begin{equation}
\label{EQ:threefold_vvprimo}
\begin{array}{ll}
(\un{v})_{xy}: Aux+yu=0 \qquad & (\un{v}')_{xy}: Bwx+yw=0 \\
(\un{v})_{xz}: Cux+Dwx+zu=0 \qquad & (\un{v}')_{xz}: Eux+Fwx+zw=0 \\
(\un{v})_{yz}: Cuy+Dwy+Auz=0 \qquad & (\un{v}')_{yz}: Euy+Fwy+Bwz=0.
\end{array}
\end{equation}

By elementary operations, one gets two new relations:
\begin{equation*}
2ACux+D(A+B)wx=0   \qquad E(A+B)ux+2FBwx=0    
\end{equation*}

By Lemma \ref{LEM:Ann_Triang}, the multiplication map $x\cdot:V_2\to A_2$ is injective so 
\begin{equation}
\label{EQ:th1.coeff}
AC=0 \qquad D(A+B)=0 \qquad E(A+B)=0\qquad FB=0.
\end{equation}

Let us now observe that $A$ and $B$ can't be simultaneously zero when evaluated in a general triangle $T$, otherwise the second projection of $\cF$ would be zero dimensional, contradicting Lemma \ref{LEM:bound}.
\smallskip

\begin{lemma}
\label{LEM:no_eigenvalues_zero}
    In this situation, for $T$ general, none of the two eigenvalues of the endomorphism $\psi_2$ can be zero.
\end{lemma}
\begin{proof}
    Let $T=([x],[y],[z])$ be a general triangle of $\cF$. W.l.o.g. we can assume by contradiction that $A\neq 0$ and $B=0$. We claim that $Y_2\subseteq \iota(z)$ and $Y_3\subseteq \cD_2(f)$.
    \smallskip
    
    Since $A\neq0$, we also get that $C=D=E=0$ by Equation \eqref{EQ:th1.coeff}. Hence, the first order deformations of $T$ are given by 
    \begin{equation}
    \label{EQ:First_order_Lemma}
    T+t\un{v}=(x+tu,y+tAu,z)\quad \mbox{ and }\quad T+s\un{v}'=(x+sw,y,z+sFw).
    \end{equation}    
    Since the differential maps of the projections from $\cF$ are surjective, we have $\dim(Y_1)=2$ and $\dim(Y_2)=\dim(Y_3)=1$. 

    Consider the curve $C_T=\pi_3^{-1}([z])$. By construction the tangent to $C_T$ in $T$ is spanned by $\un{v}$ which is projected to $u$ and $Au$ via $d_T(\pi_1|_{C_T})$ and $d_T(\pi_2|_{C_T})$ respectively. Hence, $\pi_1(C_T)$ is a curve in $Y_1$ and $\pi_2(C_T)=Y_2$. In particular, $\pi_1(C_T)\cup Y_2\subseteq \iota([z])$, as claimed. Moreover, since $\cD_1(f)=\emptyset$ by assumption (by Theorem \ref{THM:charactTS}), we get that $\iota([z])\simeq\bP^2$ up to the case where it is a projective line coinciding both with $Y_2$ and $\pi_1(C_T)$. But in this last case, we would have an involution on $Y_2\simeq\bP^1$, which yields a fixed point and then a singular point for $V(f)$. Hence $Y_3\subseteq \cD_2(f)$ as claimed.
    \smallskip

    Notice that the same argument can be used to prove that $Y_3\subseteq \iota([y])$ and thus that $Y_2\subseteq \cD_2(f)$. By Proposition \ref{PROP:PhiInjective}, one can see that for two general points $[z]$ and $[z']$ in $Y_3$, we have that $\iota([z])\neq\iota([z'])$. Hence, $Y_2\subseteq \iota([z])\cap \iota([z'])=\bP^1$ and thus $Y_2=\bP^1\subseteq \cD_2(f)$. This is impossible by Theorem \ref{THM:charactTS} since $f$ is not of TS type.
\end{proof}

From the above Lemma \ref{LEM:no_eigenvalues_zero}, since $T$ is general, we have that both $A$ and $B$ are not zero, hence by Equation \ref{EQ:th1.coeff} we also get $B=-A$. Indeed, if $A+B\neq0$, we would obtain $C=D=E=F=0$, which is not possible by Lemma \ref{LEM:bound}.  Since $A=-B\neq 0$, from Equations \eqref{EQ:th1.coeff} we have $C=F=0$. Then, the first order deformation, in this case, can be written as
\begin{equation}
T+t\un{v}=(x+tu,y+tAu,z+tDw)\qquad T+s\un{v}'=(x+sw,y-sAw,z+sEu)    
\end{equation}
and the conditions \eqref{EQ:threefold_vvprimo} are equivalent to
\begin{equation}
\label{EQ:threef_diag}
u(y+Ax)=0 \qquad zw+Exu=0 \qquad w(y-Ax)=0 \qquad zu+Dxw=0.
\end{equation}
Moreover, we can not have $D=E=0$ as observed above, so we can assume $D\neq 0$.
\smallskip

Since these equations hold, by assumption, for the general point $T\in\cF$, by deforming $T$ at the first order in the direction of $\un{v}$, i.e. by considering a curve 
$$T(t)=([x(t)],[y(t)],[z(t)])=T+t\un{v}+t^2(\cdots),$$ 
also the corresponding eigenvectors of $\psi_2$ "move". More precisely, we have two curves
$$\gamma_u:U\to A^1\qquad \gamma_w:U\to A^1$$ defined in a neighbourhood of $0$ such that $\gamma_u(0)=u$, $\gamma_w(0)=w$ and
$\{\gamma_u(t),\gamma_w(t)\}$ is a basis of eigenvectors of $d_{T(t)}\pi_2\circ d_{T(t)}\pi_1^{-1}$. These eigenvectors satisfy equations analogous to the ones in \eqref{EQ:threefold_vvprimo} where the coefficients depend on $t$.
As observed above, the sum of the two eigenvalues of $\psi_2$ is $0$ also in a neighbourhood of $T$, so that the Equations \eqref{EQ:threef_diag} hold also locally. 
\smallskip

We can then consider an expansion of the curves 
$$\gamma_u(t)=u+tu'+t^2(\cdots)\qquad \gamma_w(t)=w+tw'+t^2(\cdots)$$
and substitute them in the Equations \eqref{EQ:threef_diag} in order to get new relations. We write $A(t)=A+A't+t^2(\cdots)$ for the curve following the eigenvalue relative to $\gamma_u(t)$ with an analogous notation for the coefficients that appear in Equations \eqref{EQ:threef_diag}.

For example, from the condition $u(y+Ax)=0$ one has
$$0\equiv \gamma_u(t)(y(t)+A(t)x(t))=(u+tu'+t^2(\cdots))
(y+Ax+t(2Au+A'x)+t^2(\cdots))$$
so we get $2Au^2+A'xu+u'(Ax+y)=0$. One can do the same reasoning for the $\un{v}'$-deformation and we also can use two "parameters" to take into account in a compact description the deformation of $T$ in the direction of $t\un{v}+s\un{v}'$. In this way, $u$ and $w$ "deform" at first order as
$$u+tu'+su''   \qquad\mbox{ and }\qquad w+tw'+sw''$$
respectively. Moreover, we can assume that $u',u''$ and $w',w''$ don't depend on $u$ and $w$, respectively. This argument yields the relations
\begin{equation}\label{EQ:th1.1}
2Au^2+A'xu+(Ax+y)u'=0\qquad u''(y+Ax)+A''xu=0
\end{equation}
\begin{equation}\label{EQ:th1.2}
w'z+Dw^2+Eu^2+E'ux+Eu'x=0\qquad 2Euw+w''z+E''ux+Eu''x=0
\end{equation}
\begin{equation}\label{EQ:th1.3}
w'(y-Ax)-A'xw=0\qquad w''(y-Ax)-A''xw-2Aw^2=0
\end{equation}
\begin{equation}\label{EQ:th1.4}
2Duw+D'xw+Dw'x+u'z=0\qquad Dw^2+Dxw''+D''xw+Eu^2+zu''=0
\end{equation}

First of all, observe that multiplying by $x$ Equation $\eqref{EQ:th1.3}_I$, since $xy=0=x^2w$, we get
$$x^2w'=0.$$
Hence, multiplying by $x$ Equation $\eqref{EQ:th1.4}_I$, we obtain 
\begin{equation}
\label{EQ:th1.xuw}
xuw=0.
\end{equation}
Since $xuw=0$, from Equations \eqref{EQ:threef_diag} one can easily see that also
\begin{equation}
\label{EQ:th1.zuu}
zu^2=zw^2=yuw=0.
\end{equation}
We claim now that $A'=A''=0$ and $xu^2,xw^2\neq 0$. Indeed, let us observe that the product $xu$ vanishes if multiplied by $x, \ y, \ z, \ w$. If $xu^2=0$ too, then by the Gorenstein duality in the apolar ring $A_f$ we would get that $xu=0$, which is not possible as observed with Lemma \ref{LEM:Ann_Triang}.
In the same way, one sees that $xw^2\neq0$. Then, recalling that $u(Ax+y)=0=w(y-Ax)$, we can multiply by $u$ and by $w$ respectively Equations $\eqref{EQ:th1.1}_{II}$ and $\eqref{EQ:th1.3}_I$, getting $A''xu^2=0$ and $A'xw^2=0$, and so the claim:
\begin{equation}
\label{EQ:AAprimo}
A'=A''=0 \qquad xu^2,xw^2\neq 0.
\end{equation}

\begin{lemma}
The tangent vector $w'$ and $u''$ are trivial.
\end{lemma}
\begin{proof}
Let us prove it for $w'$. Since we can assume that $w'$ does not depend on $w$, we can write it as $w'=\alpha x+\beta y+\gamma z+\delta u$. Multiplying Equation $\eqref{EQ:th1.3}_I$ by $x$ and $y$ we get $x^2w'=0$ and $y^2w'=0$ respectively. Moreover, multiplying by $z$ Equation $\eqref{EQ:th1.2}_I$ we have also $z^2w'=0$. These last conditions yield
$$\alpha x^3=\beta y^3= \gamma z^3=0,$$
but since no vertex for the general triangle $T$ belongs to $V(f)$ we have that $\alpha=\beta=\gamma=0$. Finally, since we have just shown that $A'=0$, from Equation $\eqref{EQ:th1.3}_I$, we get 
$$\delta u(y-Ax)=0.$$
Since, from Equations \eqref{EQ:threef_diag}, we get $uy=-Aux$, we would have $-2\delta Axu=0$, which implies that $\delta=0$, by Lemma \ref{LEM:Ann_Triang}.
Then $w'=0$ as claimed. The same reasoning can also be used to prove that $u''=0$.
\end{proof}

\begin{remark}
\label{REM:Enon0}
    From the above lemma, one can see that we can assume that also $E\neq0$. Indeed, if $E\equiv 0$ locally (and thus we can simply set $E'=E''=0$ in the above equations), from Equation $\eqref{EQ:th1.2}_I$ we would have $w^2=0$. Hence $[w]$ would be a singularity for $V(f)$, which is not possible. 
\end{remark}

We claim now that $wu^2=0$ and $w^2u=0$. Multiplying by $u$ Equation $\eqref{EQ:th1.4}_I$, one gets
\begin{equation}
\label{EQ:threefold_wu1}
    2Du^2w+zuu'=0.
\end{equation}

Since we have just shown that for $T$ general also the condition $zu^2=0$ is satisfied (see Equation \eqref{EQ:th1.zuu}), we can deform it at the first order:
\begin{equation}
\label{EQ:threefold_wu2}
0\equiv (z+tDw+sEu)(u+tu')^2 \mod \langle t,s\rangle^2 \qquad \mbox{ and so }\qquad Du^2w+2zuu'=0.
\end{equation}
Putting together Equations \eqref{EQ:threefold_wu1} and \eqref{EQ:threefold_wu2}, one gets
$$u^2w=0.$$
Let us now do the same for the second claim. Multiplying by $w$ the Equation $\eqref{EQ:th1.2}_{II}$, we get $2Euw^2+zww''=0$. Moreover, by deforming the condition $zw^2=0$ (see Equation \eqref{EQ:th1.zuu}), we get $Euw^2+2zww''=0$. As before, putting together these last conditions, one gets 
$$uw^2=0$$
by using $E\neq 0$ (see Remark \ref{REM:Enon0}).
\smallskip

We claim now that $zuw=0$. In order to show this last claim, let us deform the condition just obtained, i.e. $u^2w=0$:
$$0\equiv (u+tu')^2(w+sw'') \mod \langle t,s\rangle^2 \qquad\mbox{ and so }\qquad uwu'=0.$$
Write $u'$ as $\alpha x+\beta y+\gamma z+\delta w$ for simplicity. Since $xuw=yuw=uw^2=0$ and since $z^2w=0$ by definition of $w$, one has 
$$0=uwu'=\gamma zuw\qquad z^2u'=\gamma z^3.$$ If $\gamma\neq0$ we have done, let us then assume $\gamma=0$: we get $z^2u'=0$ and multiplying by $z$ Equation $\eqref{EQ:th1.4}_I$ we get $2Dzuw=0$ as desired.
\smallskip

Finally, having $zuw=0$ yields a contradiction: in this case, from Equations \eqref{EQ:threef_diag}, we would have $xw^2=0$, which is not possible by Equation \eqref{EQ:AAprimo}. Hence, neither $\psi_2$ nor $\psi_3$ can be diagonalizable for $T$ general.
\smallskip


{\bf{Case(II)}}: For $T$ general, the map $\psi_2$ is not diagonalizable. We can choose a basis $\{u,w\}$ of $V_2$ in such a way that $\psi_2$ is written in its Jordan normal form 
$$\begin{bmatrix}A&1\\0&A\end{bmatrix}.$$
As done before, the corresponding first order deformations are
$$T+t\un{v}=(x+tu,y+tAu,z+t(Cu+Dw))\qquad T+s\un{v}'=(x+sw,y+s(u+Aw),z+s(Eu+Fw)),$$
with $C,D,E,F$ not all simultaneously zero (by Lemma \ref{LEM:bound}).

These, using Lemma \ref{LEM:TGVec_Triang_First_Order}, yield
\begin{equation}
\label{EQ:threefold_vvprimo_nondiag}
\begin{array}{ll}
(\un{v})_{xy}: Axu+yu=0 \qquad & (\un{v}')_{xy}: xu+Axw+yw=0 \\
(\un{v})_{xz}: Cxu+Dxw+zu=0 \qquad & (\un{v}')_{xz}: Exu+Fxw+zw=0 \\
(\un{v})_{yz}: Cyu+Dyw+Azu=0 \qquad & (\un{v}')_{yz}: Eyu+Fyw+zu+Azw=0.
\end{array}
\end{equation}

Again, by elementary operations, one gets:
\begin{equation*}
(2AC+D)ux+(2AD)wx=0   \qquad (2AE+C+F)ux+(2AF+D)wx=0   
\end{equation*}

By Lemma \ref{LEM:Ann_Triang}, the multiplication map $x\cdot:V_2\to A_2$ is injective so 
\begin{equation}
\label{EQ:th1.coeff_nondiag1}
AD=0 \qquad 2AC+D=0 \qquad 2AF+D=0\qquad 2AE+C+F=0.
\end{equation}
Notice that in the case where $A\neq0$, from the above relations, one easily sees that also $D=C=F=E=0$, which is not possible, as we have stressed before, so we can assume 
$$A=0,\qquad D=0 \qquad \mbox{and}\qquad F=-C.$$
\smallskip
One can then observe that the matrix associated to the endomorphism $\psi_3$ with respect to the basis ${u,w}$ is of the form
$$\begin{bmatrix}C&E\\0&-C\end{bmatrix}$$
If $C\neq0$, the map $\psi_3$ would be diagonalizable: this is not possible for $T$ general as proved in Case (I). We can then assume that 
$$C=F=0 \qquad \mbox{ and }\qquad E\neq 0.$$

We are then considering the first order deformations
$$T+t\un{v}=(x+tu,y,z))\qquad T+s\un{v}'=(x+sw,y+su,z+sEu),$$
with $E\neq0$, and the relations \eqref{EQ:threefold_vvprimo_nondiag} are then equivalent to 
\begin{equation}
\label{EQ:threef_lastcase}
yu=0 \qquad \ zu=0 \qquad xu+yw=0 \qquad \ Eux+zw=0.
\end{equation}

By considering the $\un{v}$-deformation and the $\un{v}'$-deformation of the first two equations we obtain
\begin{equation}\label{EQ:th1.11}   
yu'=0 \qquad yu''+u^2=0\qquad zu'=0 \qquad zu''+Eu^2=0.
\end{equation}

We claim now that
\begin{equation}
\label{EQ:threefold_Keru2}
\Ann_{A^1}(u^2)=\langle x,y,z,u\rangle\qquad u^2w\neq 0.
\end{equation}
Conditions $yu^2=zu^2=0$ and $xu^2=0$ follow easily by multiplying by $u$ or by $x$ the equations in \eqref{EQ:threef_lastcase}. One obtains $u^3=0$ from Equation $\eqref{EQ:th1.11}_{II}$ after multiplying by $u$ and by remembering that $yu=0$. Since $u^2$ annihilates $x,y,z$ and $u$, it cannot annihilate $w$ too, since, otherwise, $[u]$ would give a singular point for $V(f)$.
\smallskip

As a consequence of the above relation notice that $([y],[z],[u])$ is a triangle for $\cH_f$ since $yz=yu=zu=0$. Hence, by Lemma \ref{LEM:squareindep}, we have $\dim(\langle y^2,z^2,u^2\rangle)=3$. Being $([x],[y],[z])$ a triangle and by Equation \eqref{EQ:threefold_Keru2} we can conclude
\begin{equation}
\label{EQ:Threefold_Anny2z2u2}
\Ann_{A^1}(y^2,z^2,u^2)=\langle x,u\rangle.
\end{equation}

We claim now that $u''\in \langle x,u\rangle$. By the above relation, it is enough to show that $y^2u''=z^2u''=u^2u''=0$. The first relation comes from $\eqref{EQ:th1.11}_{II}$ if we multiply both terms by $y$ and use \eqref{EQ:Threefold_Anny2z2u2}. One gets the second relation working on the Equation $\eqref{EQ:th1.11}_{IV}$ and using $E\neq 0$. To get the third and last relation, let us simply observe that we have shown that the equation $u^3=0$ holds for the general triangle $T$ in $\cF$ and so, we can write its $\un{v}'$-deformation:
$$0\equiv (u+su'')^3 \mod s^2 \qquad \mbox{ which yields } \qquad u^2u''=0$$
as claimed.
\smallskip

As a consequence of the last claim we can write $u''=\alpha x+\beta u$ for suitable $\alpha,\beta\in \bK$. Now consider Equation $\eqref{EQ:th1.11}_{II}$ and recall that $yu=0$ by Equation $\eqref{EQ:threef_lastcase}$. By substituting one obtains
$$0=yu''+u^2=y(\alpha x+\beta u)+u^2=u^2,$$
which is impossible since $V(f)$ is smooth. This concludes the analysis of Case (II) and, consequently, the proof of the main theorem for the case of cubic threefolds.
\end{proof}

\section{Proof of main theorem: the cubic fourfold case}
\label{SEC:star_CubicFourfoldCase}

In this section we prove Theorem A in the last remaining case: given {\it any} smooth cubic fourfold $X=V(f)$, the Hessian variety $\cH_f$ is normal and irreducible if and only if $f$ is not of TS type.
\smallskip

We set ourselves in the framework described in \ref{strategy}. We assume by contradiction that given $X=V(f)$ a smooth cubic fourfold, with $f$ which is not of TS type the associated variety $\cH_f$ is not normal. Then there exists an irreducible $3$-dimensional family $\cF$ of triangles for $\cH_f$ with the first projection dominating a $3$-dimensional component of $\Sing(\cH_f)$. Fixing a general triangle $T=([x],[y],[z])\in\cF$ for $\cH_f$, let us now study the behaviour of $\psi_2$ and $\psi_3$ as endomorphisms of $V_2$.
We distinguish the following mutually exclusive cases:
\begin{enumerate}[(a)]
    \item for the general $T$, $\psi_2$ (or $\psi_3$) has a Jordan decomposition with one Jordan block;
    \item for the general $T$, $\psi_2$ (or $\psi_3$) has a Jordan decomposition with two Jordan blocks;
    \item for the general $T$, $\psi_2$ and $\psi_3$ are diagonalizable.
\end{enumerate}
We will rule out all the possibilities, by proving the following Lemmas \ref{LEM:oneJordan} (for the case $(a)$), \ref{LEM:twoJordans} (for $(b)$), \ref{LEM:diagonalizable}, and \ref{LEM:diagonalizable:VeryBadCase} (both of them dealing with the case $(c)$). The last one, concerning a particular subcase of $(c)$, is proved in the dedicated subsection \ref{SUBSEC:lastlemma}.
\smallskip

Let us start by ruling out case $(a)$.

\begin{lemma}\label{LEM:oneJordan}
    For $T$ general, neither the map $\psi_2$ nor $\psi_3$ can have a Jordan decomposition with only one block. 
\end{lemma}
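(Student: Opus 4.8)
The plan is to argue by contradiction, following the template of Case~(II) in the proof of Proposition~\ref{PROP:3fold} but with an extra layer of bookkeeping. Suppose the general $T=([x],[y],[z])\in\cF$ is such that, say, $\psi_2$ has a single Jordan block. Since $n=5$ we have $\dim_\bK V_2=3$, so there is a basis $\{u,w,s\}$ of $V_2$ in which $\psi_2 u=Au$, $\psi_2 w=u+Aw$, $\psi_2 s=w+As$ for some $A\in\bK$. Transporting $\{u,w,s\}$ through $(d_T\pi_1)^{-1}$ gives three tangent vectors to $\cF$ at $T$, namely $\un{v}_1=(u,Au,\psi_3 u)$, $\un{v}_2=(w,u+Aw,\psi_3 w)$, $\un{v}_3=(s,w+As,\psi_3 s)$, where $\psi_3$ is encoded by its matrix in the same basis. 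Writing out the corresponding $\un{v}$-deformations of $T$ and imposing, for each, the three relations of Lemma~\ref{LEM:TGVec_Triang_First_Order} produces a system of nine relations in $A^2$ among the products $x\cdot u,\,x\cdot w,\,x\cdot s$ and $y\cdot u,\dots,z\cdot s$.

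The first step is to extract numerical constraints on $A$ and on the structure constants of $\psi_3$. Eliminating the $y$- and $z$-products from the three relations coming from the $(i,j)=(y,z)$ pair by substituting those coming from the $(x,y)$ and $(x,z)$ pairs yields identities of the shape $\alpha\,(x\cdot u)+\beta\,(x\cdot w)+\gamma\,(x\cdot s)=0$ with $\alpha,\beta,\gamma$ polynomials in $A$ and the entries of $\psi_3$. Since $x\cdot\colon V_2\to A^2$ is injective by Lemma~\ref{LEM:Ann_Triang}, each of $\alpha,\beta,\gamma$ vanishes. I expect this linear system, combined with the fact that $\cF$ and $Y_1$ have dimension $n-2=3$ and that no projection of $\cF$ may degenerate to a point (Lemma~\ref{LEM:bound}), to force $A=0$ --- exactly as $A\ne0$ collapsed every structure constant in the threefold argument --- and to cut $\psi_3$ down to a short list of normal forms. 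A brief sub-analysis (in which a $\psi_3$ that would turn out diagonalizable is handled by running the same argument with the roles of $\psi_2$ and $\psi_3$ exchanged --- note that Case~$(c)$, treated later in Lemmas~\ref{LEM:diagonalizable} and~\ref{LEM:diagonalizable:VeryBadCase}, is logically independent of the present one and so cannot simply be cited) then reduces us to $\psi_2$ being a nilpotent single Jordan block with $\psi_2 u=0$, $\psi_2 w=u$, $\psi_2 s=w$, together with a definite form for $\psi_3$.

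In this situation the basic relations collapse to $yu=0$, $xu+yw=0$, $xw+ys=0$ together with the $xz$- and $yz$-relations, which after the same manipulations give $zu=-x\,\psi_3 u$, $y\,\psi_3 u=0$, and so on. From here the argument becomes multilinear bookkeeping inside $A_f$: multiplying these identities by $x,y,z,u,w,s$ and using $xy=xz=yz=0$ and $x^2V_2=y^2V_2=z^2V_2=0$, one computes the annihilators of the products $u^2,\,uw,\,w^2$, applies Gorenstein duality as in the threefold proof to isolate the non-vanishing ones, and observes that triples such as $([y],[z],[u])$ are again triangles, so Lemma~\ref{LEM:squareindep} applies to them. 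Because all these identities hold for the \emph{general} $T\in\cF$, one then deforms $T$ inside $\cF$ along $\un{v}_1,\un{v}_2,\un{v}_3$ and differentiates the vanishing products (expanding $u+t_1u'+t_2u''+t_3u'''$, and similarly for $w,s$); here one uses that $x^3,y^3,z^3\ne0$ by Theorem~\ref{THM:moon} to kill the ``diagonal'' terms. Iterating this procedure, as in Case~(II) of Proposition~\ref{PROP:3fold}, should force either the square of a non-zero element of $A^1$ to vanish --- contradicting the smoothness of $V(f)$ --- or produce a $\bP^k\subseteq\cD_{k+1}(f)$, or a point of $\cD_1(f)$, contradicting $[f]\in\cV$ via Theorem~\ref{THM:charactTS}.

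The main obstacle is the case analysis. Unlike in the threefold case, $\psi_3$ is a $3\times 3$ matrix whose Jordan type is not determined a priori, so after the first batch of relations one is left with several sub-cases --- according to which structure constants of $\psi_3$ vanish and to the dimensions of $Y_2$ and $Y_3$ --- each demanding its own chain of first-order deformations of the vanishing products. Keeping the resulting web of identities in $A^2$ and $A^3$ under control, and making sure that every branch terminates in one of the two standard contradictions, is where the real work lies; the conceptual skeleton, however, is the one already used for cubic threefolds.
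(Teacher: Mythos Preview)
Your outline is strategically on the right track --- contradiction via the first-order relations of Lemma~\ref{LEM:TGVec_Triang_First_Order}, injectivity of $x\cdot$ on $V_2$ from Lemma~\ref{LEM:Ann_Triang}, then deformation of the resulting identities --- and you correctly anticipate that the eigenvalue $A$ is forced to vanish. But what you have written is a plan, not a proof: you explicitly say the numerical constraints ``should'' force $A=0$, that the sub-analysis is ``brief'', and that iterating ``should'' produce a contradiction, while conceding in the last paragraph that the case analysis is ``where the real work lies''. None of that work is actually done.

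More to the point, the paper's argument is cleaner than the one you sketch, and the detour you propose through the Jordan type of $\psi_3$ is unnecessary (and your handling of it is dubious: if $\psi_3$ turns out diagonalizable you cannot ``run the same argument with the roles exchanged'', since after the exchange the primary map is diagonalizable and the present lemma says nothing about that case). Once $A=0$, the relations coming from Lemma~\ref{LEM:Ann_Triang} already pin down the $\psi_3$-column completely: one gets $E=H=I=0$, $C+G=B+F=F+L=0$, so the three tangent vectors reduce to
\[
\un{v}=(u,v,Lu-Gv+Dw),\qquad \un{v}'=(v,w,-Lv+Gw),\qquad \un{v}''=(w,0,Lw),
\]
with no residual freedom in the shape of $\psi_3$ to case-split on. From here the paper proceeds by three short claims rather than a branching analysis: first, $\langle x,y,z,v,w\rangle\subseteq\ker(w^2\cdot)$ (using $(\un{v}'')_{xy}$, $(\un{v}')_{xy}$ and deformations of $yw=0$), so Gorenstein duality forces $uw^2\neq 0$, and deforming $(\un{v}'')_{xz}$ then gives $L=0$, $zw=0$; second, one shows $zv=0$ (by deforming $(\un{v})_{xy}$ and again invoking duality), hence $G=0$ and $D\neq 0$; third, deforming $(\un{v})_{xz}$ and $(\un{v}')_{xy}$ along $t\un{v}$ yields two relations whose difference gives $uw^2=0$. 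This contradicts the first claim and finishes the lemma. The contradiction is thus a single clean one ($uw^2\neq 0$ versus $uw^2=0$), not a tree of sub-cases; you should carry out these three claims explicitly rather than gesture at a case analysis that the proof does not in fact require.
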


\begin{proof}
Let us suppose, w.l.o.g, that $\psi_2$ has a Jordan decomposition with one Jordan block. We can choose a basis $\{u,v,w\}$ of $V_2=\Ann_{A^1}(x^2,y^2,z^2)$ in such a way that $\psi_2$ is written in its Jordan normal form with $w$ as eigenvector. Then we have a basis $\{x,y,z,u,v,w\}$ of $A^1$ with the first three vectors such that $x^3,y^3,z^3\neq 0$. Then three independent tangent vectors to $\cF$ in $T$ are given as
\begin{equation*}
\un{v}=(u,Au+v,Bu+Cv+Dw),\quad \un{v}'=(v,Av+w,Eu+Fv+Gw),\quad  \un{v}''=(w,Aw,Hu+Iv+Lw)
\end{equation*}
for suitable scalars depending on the triangle. 
\smallskip

As done in the case of threefolds, one uses Lemma \ref{LEM:TGVec_Triang_First_Order} in order to obtain conditions from the first order deformations associated to $\un{v},\un{v}'$ and $\un{v}''$. By elementary operations between these equations and by using Lemma \ref{LEM:Ann_Triang}, one gets the following relations on the coefficients appearing in the above description of the tangent vectors:
\begin{align*}
   2AB+E=2AC+B+F & =2AD+C+G=2AE+H=0\\
   2AF+E+I=2AG+F+L& =2AH=2AI+H=2AL+I=0    
\end{align*}

Note that if $A\neq 0$, then one has that all the other coefficients have to be $0$, which is not possible since the second and third projections cannot send $\cF$ to a point by Lemma \ref{LEM:bound}. Hence $A=0$ and then $E=H=I=C+G=B+F=F+L=0$. We can then write the above tangent vectors as
\begin{equation*}
\un{v}=(u,v,Lu-Gv+Dw),\quad \un{v}'=(v,w,-Lv+Gw),\quad  \un{v}''=(w,0,Lw)
\end{equation*}
Then, the above-mentioned equations can be reduced to the following system of equations:
\begin{equation}
\label{EQ:fourfold_vvprimo}
\begin{array}{ll}
(\un{v})_{xy}: xv+yu=0 \qquad & (\un{v})_{xz}: Lxu-Gxv+Dxw+zu=0 \\
(\un{v}')_{xy}: xw+yv=0 \qquad & (\un{v}')_{xz}: -Lxv+Gxw+zv=0 \\
(\un{v}'')_{xy}: yw=0\qquad & (\un{v}'')_{xz}: Lxw+zw=0
\end{array}
\end{equation}

As usual, if $T$ is deformed in the direction of $t\un{v}+s\un{v}'+r\un{v}''$ we have the corresponding deformation $u+tu'+su''+ru'''$ of $u$ (and analogously the ones for $v$ and $w$).
\smallskip

{\bf {Claim}}: $uw^2\neq 0$, $L=0$ and $zw=0$.\\
First of all, let us study $\ker(w^2\cdot : A^1\to A^3)$. Clearly $yw^2=0$ by Equation $(\un{v}'')_{xy}$. Moreover, if we multiply by $w$ Equation $(\un{v}')_{xy}$ and use Equation $(\un{v}'')_{xy}$, we get $xw^2=0$. Similarly, one gets $zw^2=0$ upon multiplying by $w$ Equation $(\un{v}'')_{xz}$. Since Equation $(\un{v}'')_{xy}$ holds for $T$ general, one can deform it in the direction of $t\un{v}+s\un{v}'+r\un{v}''$ and obtain
$$0=(y+tv+sw)(w+tw'+sw''+rw''') \mod (t,s,r)^2.$$
This yields
$$yw'+wv=0 \ \ \ \ \ yw''+w^2=0.$$
If we multiply by $w$ these relations, we get $w^2v=0$ and $w^3=0$. Hence we have $$\langle x,y,z,v,w\rangle\subseteq \ker(w^2\cdot: A^1\to A^3).$$ 
Observe now that $uw^2\neq 0$. Indeed, if $uw^2=0$ then we would also have $w^2\cdot A^1=\{0\}$ so, by Gorenstein duality, this would imply $w^2=0$, which contradicts the smoothness of the cubic fourfold $V(f)$.  
\smallskip    
    
Finally, by deforming Equation $(\un{v}'')_{xz}$ and multiplying by $w$, using the various vanishings obtained before, we get $2Luw^2=0$ and thus $L=0$. Then one has the claim by Equation $(\un{v}'')_{xz}$.
\smallskip 

{\bf {Claim}}: $zv=0$, $G=0$ and $D\neq 0$.\\
By deforming equations $(\un{v})_{xy}$ in the direction of $t\un{v}+s\un{v}'$ we get respectively
$$xv'+2uv+yu'=0\qquad\mbox{ and}\qquad xv''+v^2+yu''+uw=0.$$
If one multiplies these by $z$, one obtains $zuv=zv^2=0$. One can now observe that $\ker(zv\cdot:A^1\to A^3)=A^1$, so by Gorenstein duality, one has $zv=0$, as claimed. since $xw\neq 0$ (by Lemma \ref{LEM:Ann_Triang}), from Equation $(\un{v}')_{xz}$ one obtains $G=0$ and, consequently, by Lemma \ref{LEM:bound}, also $D\neq 0$.
\smallskip 

{\bf {Claim}}: $uw^2=0$.\\
Since $zv=0$ for $T$ general, we can deform this equation in the direction of $t\un{v}$. We get $zv'+Dvw=0$, and so $Dv^2w=0$, if we multiply by $v$. Being $D\neq 0$ one has also $v^2w=0$. Let us now deform $(\un{v})_{xz}$ and $(\un{v}')_{xy}$ in the direction of $t\un{v}$ in order to get $$D'xw+Dxw'+2Duw+zu'=0\quad \mbox{ and }\quad xw'+wu+yv'+v^2=0.$$
Upon multiplying by $w$ one gets $xww'+2uw^2=xww'+uw^2=0$, which yields $xww'=uw^2=0$. This is impossible as observed in the first claim above.
\end{proof}

Let us now prove that case $(b)$ can't be realised.

\begin{lemma}\label{LEM:twoJordans}
For $T$ general, neither the map $\psi_2$ nor $\psi_3$ can have a Jordan decomposition with two blocks.
\end{lemma}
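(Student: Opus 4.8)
The plan is to run the same machine as for case $(a)$ in Lemma \ref{LEM:oneJordan}, now with the bookkeeping of a Jordan type having two blocks. Work inside Framework \ref{strategy} and assume, for a contradiction, that for the general triangle $T=([x],[y],[z])\in\cF$ the endomorphism $\psi_2$ of $V_2=\Ann_{A^1}(x^2,y^2,z^2)$ (the case of $\psi_3$ being symmetric) has Jordan type $2+1$, which is the only possibility with two blocks since $\dim_{\bK}V_2=3$. Pick a basis $\{u,v,w\}$ of $V_2$ adapted to the Jordan decomposition, so that $\psi_2u=Au+v$, $\psi_2v=Av$ and $\psi_2w=Bw$ for scalars $A,B$ depending on $T$; together with $x,y,z$ this gives a basis $\{x,y,z,u,v,w\}$ of $A^1$ with $x^3,y^3,z^3\neq0$ (here I use Theorem \ref{THM:moon}, as in \ref{strategy}). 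Since $d_T\pi_1$ is an isomorphism, three independent tangent vectors to $\cF$ at $T$ are
\[
\un{v}=(u,Au+v,\psi_3u),\qquad \un{v}'=(v,Av,\psi_3v),\qquad \un{v}''=(w,Bw,\psi_3w),
\]
where $\psi_3u,\psi_3v,\psi_3w\in V_2$ have a priori unknown coordinates with respect to $\{u,v,w\}$.

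First I would extract linear constraints on all these scalars. Applying Lemma \ref{LEM:TGVec_Triang_First_Order} to $\un{v},\un{v}',\un{v}''$ produces nine relations in $A^2$ of the form $x_ix_j'+x_jx_i'=0$; taking the $\bK$-linear combinations that cancel every term of the form $x\cdot(\text{element of }V_2)$ and using that $x\cdot\colon V_2\to A^2$ is injective (Lemma \ref{LEM:Ann_Triang}), one obtains a system of scalar equations. I expect this to force $A=0$ — exactly as $A=0$ was forced for the size-two block in the threefold Case (II) — because $A\neq0$ would propagate to the vanishing of all the remaining scalars, collapsing the second or third projection of $\cF$ and contradicting Lemma \ref{LEM:bound}; the same principle pins the $\psi_3$-coordinates down to a short normal form and splits the residual freedom (in particular the value of $B$ and which of $\psi_3u,\psi_3v,\psi_3w$ survive) into finitely many sub-cases, with the degenerate ones again excluded by Lemma \ref{LEM:bound}. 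In each surviving sub-case the tangent vectors simplify (note for instance that with $A=0$ the vector $\un{v}'$ has vanishing second component, so $\un{v}'_{xy}$ already yields $yv=0$) and the nine relations reduce to a small clean system among $x,y,z,u,v,w$, analogous to \eqref{EQ:fourfold_vvprimo}.

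The endgame is the usual ``deform and multiply'' argument. Each relation in the reduced system holds for the general $T$, hence identically along $\cF$; deforming it in the directions $\un{v},\un{v}',\un{v}''$ (i.e.\ differentiating along $\cF$, which introduces first-order variations $u',v',w',\dots$ of the basis vectors) yields new relations in $A^3$, and multiplying these by suitable members of $\{x,y,z,u,v,w\}$ pushes them into $A^4$. Then Gorenstein (Poincar\'e) duality converts ``annihilated by a spanning set of $A^1$'' into ``equal to zero'', and one is forced into one of two contradictions: either the square of one of $u,v,w$ (or of a nonzero linear combination produced by the relations) vanishes, which is impossible for the smooth cubic $X=V(f)$ by Proposition \ref{PROP:descrapol}; or one produces a linear $\bP^{k}$ inside some Hessian locus $\cD_{j}(f)$, which is forbidden by Theorem \ref{THM:charactTS} since $f\in\cV$.

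The main obstacle is not any single computation but the branching. A two-block Jordan type has genuinely more sub-cases than the one-block one: the small-block eigenvalue $B$ may or may not coincide with $A=0$, the small-block eigenvector $w$ may or may not be distinguished for $\psi_3$, and several $\psi_3$-coordinates may vanish, each option a priori leading to a different chain of deformations. The work is therefore a disciplined case analysis in which, at every branch, one records exactly which scalars are nonzero (so that one may divide), checks that no projection of $\cF$ degenerates (Lemma \ref{LEM:bound}) and that no vertex of $T$ lies on $X$ (Theorem \ref{THM:moon}), and funnels the branch to one of the two contradictions above. The conceptual content is uniform across branches; the difficulty is organizing the combinatorics so that no case is missed.
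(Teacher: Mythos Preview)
Your overall architecture — extract scalar relations from Lemma \ref{LEM:TGVec_Triang_First_Order} via injectivity of $x\cdot$ on $V_2$, then branch, then deform-and-multiply — is exactly the paper's. But your expectation that the relations force $A=0$ is wrong, and this is where the proposal leaks. With $\psi_3u=Cu+Dv+Ew$, $\psi_3v=Fu+Gv+Hw$, $\psi_3w=Iu+Lv+Mw$, the scalar system one actually gets is
\[
2AC+F=2AD+C+G=AF=2AG+F=BM=0,\quad E(A{+}B)+H=H(A{+}B)=I(A{+}B)=L(A{+}B)+I=0.
\]
If $A\neq0$ this kills $C,D,F,G$, but it does \emph{not} collapse any projection: $\psi_2$ is still invertible (so $\dim Y_2=3$), and the $w$-row of $\psi_3$ can survive. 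The paper therefore splits on the pair $(A,B)$ into four cases, and two of them ($A\neq0$, $B=0$; and $A,B\neq0$) genuinely occur in the analysis.

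Two ingredients you are missing. First, in Cases (II) and (IV) the paper does not finish by deform-and-multiply on the same triangle; instead it reads off the shape of $\psi_3$ from the surviving scalars and shows that $\psi_3$ falls into an already-excluded Jordan pattern (two blocks with both eigenvalues $0$, i.e.\ Case (I), or one block), so those cases are dispatched by reduction. Second, Case (III) ($A\neq0$, $B=0$) is handled by a geometric fiber argument in the spirit of Lemma \ref{LEM:no_eigenvalues_zero}: one shows $Y_3\subset\cD_2(f)$ and then, by varying $[y]$ and intersecting the kernels $\iota([y])$, that $Y_3\simeq\bP^1\subset\cD_2(f)$, contradicting Theorem \ref{THM:charactTS}. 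This branch does not fit the deform-and-multiply template you describe. So your plan covers Case (I) correctly, but the claim ``$A=0$ is forced'' hides Cases (III)--(IV), and the uniform endgame you propose does not apply to Case (III).
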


\begin{proof}
Let us suppose, w.l.o.g, that $\psi_2$ has a Jordan decomposition with two Jordan blocks. As done in Lemma \ref{LEM:oneJordan}, we can choose a basis $\{u,v,w\}$ of $V_2=\Ann_{A^1}(x^2,y^2,z^2)$ in such a way that $\psi_2$ is written in its Jordan normal form with $v$ and $w$ as eigenvectors. Then, in this case, we can write three independent tangent vectors to $\cF$ in $T$ as
\begin{equation*}
\un{v}=(u,Au+v,Cu+Dv+Ew),\quad \un{v}'=(v,Av,Fu+Gv+Hw),\quad  \un{v}''=(w,Bw,Iu+Lv+Mw)
\end{equation*}
for suitable scalars depending on the triangle. 
\smallskip

As done in the previous cases, one gets the following relations involving the coefficients appearing in the above description of the tangent vectors:
\begin{align*}
    2AC+F=2AD+C+G=& AF=2AG+F=BM=0 \\
    E(A+B)+H=H(A+B)= &I(A+B)=L(A+B)+I=0  
\end{align*}
We distinguish four cases, depending on the vanishing of the two eigenvalues.
\smallskip

{\bf Case (I): $A=B=0$}.\\
Since $A=B=0$, we also have $F=H=I=C+G=0$. Among the various equations obtained by deforming at first order the general triangle $T$ one gets
\begin{equation}
\label{EQ:Fourfold_Triang_twoJordan_Case1}
yv=0\qquad yw=0.
\end{equation}

We claim now that $uv^2\neq 0$. By deforming at first order the equation $yv=0$ in the direction of $t\un{v}$, one gets
$yv'+v^2=0$ which implies that
$$\langle x,y,z,v,w\rangle\subseteq\ker(v^2).$$
On the other hand, this has to be an equality, otherwise we would have $v^2=0$ by Gorenstein duality. In particular, $uv^2\neq 0$.
\smallskip

From the Equation $yv'+v^2=0$, one can also see that $yv'\neq0$, otherwise we would contradict the smoothness of $V(f)$. We claim now that $yv'=0$ so we conclude Case (I). 
\smallskip

Since $T$ is a triangle and by Equations \eqref{EQ:Fourfold_Triang_twoJordan_Case1}, we have  $\langle x,z,v,w\rangle\subseteq \ker(y\cdot :A^1\to A^2)$, so, in order to prove $yv'=0$, it is enough to show that $v'$ does not depend on $y$ and $u$. One easily sees that $0=y(yv'+v^2)=y^2v'$. Since, by assumption $\langle x,z,u,v,w\rangle=\ker(y^2\cdot :A^1\to A^3)$, we get that $v'$ does not depend on $y$. Moreover, since we have just shown that for $T$ general also the equation $v^3=0$ holds, we can deform it and, in the same way, one proves that $v'$ does not depend on $u$. 
\smallskip

{\bf Case (II): $A=0, B\neq 0$}.\\
In this case we also have $E=F=C+G=H=I=L=M=0$ so that
\begin{equation*}
\un{v}=(u,v,Cu+Dv),\quad \un{v}'=(v,0,-Cv),\quad  \un{v}''=(w,Bw,0)
\end{equation*}
Among the equations deduced by deforming the general triangle one gets the conditions
\begin{equation}
\label{EQ:Fourfold_Triang_twoJordan_Case2}
yv=0\qquad zw=0\qquad xv+yu=0\qquad Bxw+yW=0\qquad -Cxv+zv=0\qquad Cxu+Dxv+zu
\end{equation}

We claim now that $C=0$.  First of all notice that
$$\langle x,y,z,v,w\rangle\subseteq\ker(xv).$$
Indeed, we have $v\in V_2$ by assumption so $x^2v$ and since $T$ is a triangle we also have $xyv=xzv=0$. The last two vanishing can be easily obtained form Equations $\eqref{EQ:Fourfold_Triang_twoJordan_Case2}$ upon a multiplication by $v$ and $w$ (and by recalling that we are assuming $B\neq 0$). In particular, we have $xuv\neq 0$, by Lemma \eqref{LEM:Ann_Triang}. 

Then by taking the fifth and sixth equations in $\eqref{EQ:Fourfold_Triang_twoJordan_Case2}$ multiplied by $u$ and $v$ respectively, one has 
$$-Cxuv+zuv=Cxuv+zuv=0$$
so $Cxuv=0$. Then, since $xuv\neq 0$, we have necessarily $C=0$ for the general triangle and so $D\neq0$ by Lemma \ref{LEM:bound}. In particular we have $\psi_3(u)=Dv\neq 0$ and $\psi_3(v)=\psi_3(w)=0$. Hence, for the general triangle $T$, $\psi_3$ is not diagonalizable and has two Jordan blocks with eigenvalues both equal to $0$. This is impossible, as seen in Case (I).
\smallskip

{\bf Case (III): $A\neq 0, B=0$}.\\
This case can be treated in a "geometric" way as done in Lemma \ref{LEM:no_eigenvalues_zero}. Indeed, since $A\neq 0$ and $B=0$, we have also all the other variables, besides $M$, are zero. Moreover, by Lemma \ref{LEM:bound}, $M\neq 0$. In particular, the tangent vectors to $\cF$ in $T$ is spanned by
\begin{equation*}
\un{v}=(u,Au+v,0),\quad \un{v}'=(v,Av,0),\quad  \un{v}''=(w,0,Mw)
\end{equation*}
and the varieties $\pi_i(\cF)=Y_i$ have dimension $3,2$ and $1$, respectively. 
As done in the other cases, by studying the relations coming from the deformation at the first order, one can easily see that $Y_3$ is contained in $\cD_2(f)$ and $Y_2$ is a surface living in $\cD_3(f)$. Since $Y_2$ cannot be contained in $\cD_2(f)$ (otherwise we would have singular points for $V(f)$ by Proposition \ref{PROP:Comp_Of_Gamma}), for the general $[y]\in Y_2$ we have that $\iota([y])\simeq\bP^2$.
\smallskip

For a general triangle $T=([x],[y],[z])$, consider the curve $C_T=\pi_2^{-1}([y])$. The tangent to $C_T$ in $T$ is generated by $\un{v}''$ which is projected to $w$ and $Mw$ via $d_T(\pi_1|_{C_T})$ and $d_T(\pi_3|_{C_T})$ respectively. Hence, $\pi_1(C_T)$ is a curve in $Y_1$ and $\pi_3(C_T)=Y_3$. 

In particular, $\pi_1(C_T)\cup Y_3\subseteq \iota([y])\simeq\bP^2$. By varying the point $[y]$, the kernel has to move, since the curve $\pi_1(C_T)$ has to cover the threefold $Y_1$. Hence, $Y_3$ lies in the intersection of distinct projective planes: we have $\bP^1\simeq Y_3$ and thus a line in $\cD_2(f)$. This implies, by Theorem \ref{THM:charactTS}, that $f$ is of TS type, against our assumptions.
\smallskip

{\bf Case (IV): $A,B\neq0$}.\\
First of all, notice that assuming $A,B\neq 0$, implies that $A+B=0$. Indeed, if we assume also $A+B\neq 0$, we would obtain that all the other coefficients are equal to $0$. This is impossible by Lemma \ref{LEM:bound}.
Then
\begin{equation*}
\un{v}=(u,Au+v,Ew),\quad \un{v}'=(v,Av,0),\quad  \un{v}''=(w,Bw,Lv)
\end{equation*}
with $E,L$ not both zero. In particular, $\psi_3$ is not diagonalizable for the general triangle $T$ and all its eigenvalues are zero. This is impossible as seen in the previous cases.
\end{proof}


As a consequence of Lemmas \ref{LEM:oneJordan} and \ref{LEM:twoJordans}, the maps $\psi_2$ and $\psi_3$ have to be diagonalizable. In what follows, we rule out this remaining case, splitting it up into two lemmas, the second of which is postponed in the following subsection.


\begin{lemma}\label{LEM:diagonalizable}
    For $T$ general, neither the map $\psi_2$ nor $\psi_3$ can be diagonalizable.
\end{lemma}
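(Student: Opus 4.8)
The plan is to argue by contradiction, assuming that for the general triangle $T=([x],[y],[z])\in\cF$ the endomorphism $\psi_2$ of $V_2$ is diagonalizable (the case of $\psi_3$ being symmetric). Since $\cF$ and $Y_1$ have dimension $n-2=3$ and $f\in\cV$, Theorem \ref{THM:moon} guarantees that no vertex of $T$ lies on $X$, so $A^1=V_1\oplus V_2$ with $\dim V_2=3$, and by Lemma \ref{LEM:Ann_Triang} the multiplication $x\cdot\colon V_2\to A^2$ is injective; moreover Lemma \ref{LEM:bound} forbids $\psi_2$ or $\psi_3$ from vanishing. The first step is to record the first-order relations coming from Lemma \ref{LEM:TGVec_Triang_First_Order}: writing the tangent vector of $\cF$ at $T$ with first component $\xi\in V_2$ as $(\xi,\psi_2(\xi),\psi_3(\xi))$, one gets
\begin{equation*}
x\,\psi_2(\xi)=-\,y\,\xi,\qquad x\,\psi_3(\xi)=-\,z\,\xi,\qquad y\,\psi_3(\xi)+z\,\psi_2(\xi)=0 .
\end{equation*}
Replacing $\xi$ by $\psi_3(\xi)$, resp. by $\psi_2(\xi)$, in the first two identities, adding, and using the third one, we find $x\,(\psi_2\psi_3+\psi_3\psi_2)(\xi)=0$ for all $\xi$, hence, by the injectivity of $x\cdot$ on $V_2$, the anticommutation relation
\begin{equation*}
\psi_2\psi_3+\psi_3\psi_2=0 .
\end{equation*}

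Next I would exploit this relation together with the diagonalizability of \emph{both} $\psi_2$ and $\psi_3$, which for $T$ general follows from Lemmas \ref{LEM:oneJordan} and \ref{LEM:twoJordans} once $\psi_2$ is assumed diagonalizable. Two nonzero anticommuting diagonalizable operators on a $3$-dimensional space are very rigid: $\psi_3$ carries the $\lambda$-eigenspace of $\psi_2$ into its $(-\lambda)$-eigenspace, so $\psi_2$ cannot be a scalar, the spectrum of $\psi_2$ is heavily constrained, and the shape of $\psi_3$ in an eigenbasis $\{u,v,w\}$ of $\psi_2$ is essentially pinned down by the eigenvalue configuration (a $2\times 2$ off-diagonal block on a pair of opposite eigenlines, plus a scalar piece on the remaining line), the extra requirement that $\psi_3$ itself be diagonalizable eliminating the degenerate off-diagonal blocks. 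This reduces the problem to a short list of explicit normal forms for the three tangent vectors $\un{v},\un{v}',\un{v}''$ of $\cF$ at $T$, including the especially symmetric one in which $\psi_2$ and $\psi_3$ are simultaneously diagonalizable.

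For each normal form the strategy is the one already deployed in the proof of Proposition \ref{PROP:3fold} and of Lemmas \ref{LEM:oneJordan}, \ref{LEM:twoJordans}: deform $T$ inside $\cF$ along $t\un{v}+s\un{v}'+r\un{v}''$, let the eigenvectors $u,v,w$ and the eigenvalues vary, and substitute the resulting expansions into the first-order relations above (valid for the general $T$, hence identically on $\cF$) and their multiplicative consequences. This yields a chain of vanishings among the degree-$2$ and degree-$3$ monomials in $x,y,z,u,v,w$ of $A_f$ — relations such as $xu^2=0$, $u^2w=0$, $zuw=0$, or $\langle x,y,z,v,w\rangle\subseteq\Ann_{A^1}(w^2)$. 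Combining these with Gorenstein duality, with the independence of the squares of the vertices (Lemma \ref{LEM:squareindep}), and with the injectivity of Lemma \ref{LEM:Ann_Triang}, I expect each normal form to be forced either into a nonzero $\ell\in A^1$ with $\ell^2=0$, i.e.\ a singular point of $X$, excluded; or into a linear space $\bP^k\subseteq\cD_k(f)$ violating the dimension bound of Proposition \ref{PROP:Comp_Of_Gamma}, which by Theorem \ref{THM:charactTS} would make $f$ of Thom-Sebastiani type, again excluded. The single normal form on which this infinitesimal chase fails to close is isolated and postponed to Lemma \ref{LEM:diagonalizable:VeryBadCase}, where one instead reconstructs the equation of $f$ and checks $\dim\Sing(\cH_f)=2$ directly.

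The hard part will not be any single computation but the organization: the anticommutation relation collapses a continuum of a priori possibilities into finitely many normal forms, yet for each of them one must run a long and delicate monomial chase in $A_f$ and verify that the vanishings produced actually suffice; the genuinely subtle point is to recognize precisely the residual symmetric configuration on which the second-order method produces no new relation, so that it can be peeled off and handled separately in Lemma \ref{LEM:diagonalizable:VeryBadCase}.
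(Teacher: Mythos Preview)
Your plan is correct and lands on the same proof as the paper, with one pleasant conceptual addition and one small mismatch in how the cases are actually dispatched.

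The anticommutation relation $\psi_2\psi_3+\psi_3\psi_2=0$ is a genuine improvement in presentation: the paper does not state it, but instead writes $\psi_2=\mathrm{diag}(A,B,C)$ and $\psi_3=(d_{ij})$ in the eigenbasis $\{u,v,w\}$ of $\psi_2$ and derives, by the same manipulation you use, the nine scalar conditions $AD=BH=CN=0$, $(A{+}B)E=(A{+}B)G=0$, $(A{+}C)F=(A{+}C)L=0$, $(B{+}C)I=(B{+}C)M=0$. These are exactly the matrix entries of $\psi_2\psi_3+\psi_3\psi_2$, so your single identity is a clean repackaging of the paper's coefficient relations and leads to the same case split according to how many of $A,B,C$ vanish.

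Where your sketch diverges slightly from what actually happens is in the treatment of the individual cases. You anticipate a uniform second-order monomial chase in $A_f$ for every normal form, but the paper does this only for the residual case you correctly isolate (exactly one eigenvalue of $\psi_2$ equal to $0$, handed to Lemma~\ref{LEM:diagonalizable:VeryBadCase}). For the case $A=B=0,\ C\neq 0$ the argument is instead geometric: one reads off $yu=yv=0$, hence $Y_2\subseteq\cD_2(f)$, tracks the fibers $\pi_2^{-1}([y])$ to show $Y_3$ sits in a moving family of $\bP^3$'s, and ultimately produces a $\bP^2\subseteq\cD_3(f)$, which by Theorem~\ref{THM:charactTS} forces $f$ to be of TS type (note the indexing: it is $\bP^k\subseteq\cD_{k+1}(f)$, not $\bP^k\subseteq\cD_k(f)$, that invokes Theorem~\ref{THM:charactTS}). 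For $A,B,C\neq 0$ the paper does no computation at all: the anticommutation relations force the matrix of $\psi_3$ into a shape that is either non-diagonalizable or diagonalizable with a zero eigenvalue, and one appeals to the already-handled cases. So your overall architecture is right, but the ``long and delicate monomial chase'' is confined to the Very Bad Case; the other two normal forms are dispatched more cheaply.
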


\begin{proof}
As a consequence of Lemma \ref{LEM:oneJordan} and \ref{LEM:twoJordans}, we have that $\psi_2$ and $\psi_3$ are both diagonalizable for the general triangle. We can choose a basis $\{u,v,w\}$ of $V_2=\Ann_{A^1}(x^2,y^2,z^2)$ in such a way that $\psi_2$ is in diagonal form. Thus three independent tangent vectors to $\cF$ in $T$ are
\begin{equation*}
\un{v}=(u,Au,Du+Ev+Fw),\quad \un{v}'=(v,Bv,Gu+Hv+Iw),\quad  \un{v}''=(w,Cw,Lu+Mv+Nw)
\end{equation*}
for suitable coefficients depending on the triangle. 
\smallskip

As done so far, one gets the following relations:
\begin{align*}
    AD=BH=CN=0 \qquad & E(A+B)=G(A+B)=0 \\
    F(A+C)=L(A+C)=0 \qquad & I(B+C)=M(B+C)=0.
\end{align*}
Notice that $A,B$ and $C$ can not be all equal to zero by Lemma \ref{LEM:bound}. Hence, we distinguish three cases, depending on the vanishing of the three eigenvalues.
\smallskip

{\bf Case (I): $A=B=0$ and $C\neq 0$}.\\
In this case, one can easily see that three tangent vectors to $\cF$ at $T$ can be written as
\begin{equation*}
\un{v}=(u,0u,Du+Ev),\quad \un{v}'=(v,0,Gu+Hv),\quad  \un{v}''=(w,Cw,0)
\end{equation*}
for suitable coefficients so that $\dim(Y_1)=3$, $\dim(Y_2)=1$ and $\dim(Y_3)\in\{1,2\}$. 
\smallskip

\begin{itemize}

\item {\bf Claim:} $\dim(Y_3)=1$ and $Y_2\subseteq\cD_2(f)$.\\
Among the equations obtained by deforming the general triangle $T$, one has $yu=yv=0$. Moreover, by definition of triangle, we clearly have also $yx=yz=0$: the general $[y]\in Y_2$ lives in $\cD_2(f)$, i.e. for $[y]\in Y_2$ general $\iota([y])\simeq\bP^3$. In the same way, since one gets $zw=0$, one also has $Y_3\subseteq\cD_3(f)$. Notice that the surface $\pi_2^{-1}([y])=S_T$ projects onto a surface in $Y_1$ via $\pi_1$ and dominates $Y_3$ via $\pi_3$. This means that $Y_3\subset\iota([y])\simeq\bP^3$. Since these kernels can not be fixed with $[y]$ varying, one gets that $Y_3\subseteq\iota([y_1]\cap\iota([y_2])\simeq\bP^s$ with $s\in\{1,2\}$. From this, one can see that $\dim(Y_3)=1$: indeed, if $Y_3$ is a surface then we necessarily have $s=2$ and $Y_3\simeq\bP^2$, but this means there exists a $2$-projective plane in $\cD_3(f)$, which is impossible by Theorem \ref{THM:charactTS}
\smallskip

    \item {\bf Claim:} $Y_3\subseteq \cD_2(f)$.\\
Since $Y_3$ is a curve, the endomorphism $\psi_3|_{\langle u, v\rangle}$ has necessarily rank $1$, i.e. there exists a vector $au+bv$ that is sent to $0$ by $\psi_3$. Among the first order conditions given by the tangent vectors above, one has
$$(Du+Ev)x+zu=(Gu+Hv)x+zv=0.$$
Then, since $\psi_3(au+bv)=a(Du+Ev)+b(Gu+Hv)=0$, one has $z(au+bv)=0$. Hence $\iota([z])\simeq \bP^3$ and we have that $Y_3$ is contained in $\cD_2(f)$ too.
\smallskip

Being $f$ not of TS type, and being $Y_3$ a curve in $\cD_2(f)$, we have that $Y_3$ is not a line. On the other hand $Y_3\subseteq \iota([y_1]\cap\iota([y_2])\simeq \bP^2$ so, the general triangle $T$ is such that $\iota([y])\simeq \bP^3$ contains a fixed $\bP^2$, which we denote by $\Pi$. To conclude, let us take a general point $[\eta]\in\Pi$: by symmetry the general $[y]\in Y_2$ is such that $[y]\in\iota([\eta])$, and so $Y_2\subset\iota([\eta])\simeq\bP^r$. Since $Y_2\not\simeq\bP^1$, we have that $r\geq2$: this means that $\Pi\simeq \bP^2\subseteq \cD_3(f)$, which yields a contradiction as above.
\end{itemize}

\noindent Let us stress that having a $\bP^2$ contained in $\cD_3(f)$ is a phenomenon that happen exactly when $V(f)$ is a smooth, non-cyclic cubic of $TS$ type as described in the specific Example \ref{EX:cub+cub}.
\smallskip

{\bf Case (II): $A=0$ and $B,C\neq0$}.\\
This case can not occur. It will be treated in Lemma \ref{LEM:diagonalizable:VeryBadCase}. 
\smallskip


{\bf Case (III): $A,B,C\neq0$}.\\
Being $A,B,C\neq 0$ one has $D=H=N=0$. Notice that the three values $A+B,A+C$ and $B+C$ can not be simultaneously zero, moreover at least one of them has to be $0$, since otherwise we would get $D=E=F=G=H=I=L=M=N=0$ which is impossible by Lemma \ref{LEM:bound}. W.l.o.g, we distinguish $2$ cases: either $A+C=0$ and $A+B,B+C\neq 0$ or $A+B=A+C=0$ and $B+C\neq 0$.
In the first case, $\psi_3$ is diagonalizable with one zero eigenvalue, whereas in the second case one has the same conclusion or that $\psi_3$ is not diagonalizable and its Jordan normal form has $1$ Jordan block with $0$ as the only eigenvalue. Both conclusions yield a contradiction as observed in the previous cases or in Lemma \ref{LEM:oneJordan}.
\end{proof}


\subsection{The end of the proof}
\label{SUBSEC:lastlemma}

To end the proof of Theorem \ref{THM:star}, we have to rule out a last remaining possibility which could arise in the case where the both the maps $\psi_2$ and $\psi_3$ are diagonalizable (case $(c)$, as stated at the beginning of Section \ref{SEC:star_CubicFourfoldCase}). This subsection is devoted to this subcase, which is ruled out with the following Lemma \ref{LEM:diagonalizable:VeryBadCase}, which yields also the end of the proof of the main theorem. To prove this last Lemma, we start by analyzing the usual framework \ref{strategy} and obtaining different relation that both the vertices of the general triangle and the tangent vectors to it have to satisfy. After that, we will use these conditions to reconstruct the cubic fourfolds which the framework is, in this case, associated with, showing that these do not actually satisfy the hypotheses we are setting.

\begin{lemma}\label{LEM:diagonalizable:VeryBadCase}
    For $T$ general, neither the map $\psi_2$ nor $\psi_3$ can be diagonalizable with dimension of the kernel equal to $1$.
\end{lemma}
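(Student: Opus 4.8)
The plan is to show that, under the hypotheses, the infinitesimal data of Framework \ref{strategy} become rigid enough to reconstruct $f$ explicitly, and then to contradict the standing assumption $\dim\Sing(\cH_f)=n-2$ by a direct computation on the reconstructed cubic fourfolds; this is the one place where the infinitesimal argument by itself is not conclusive. By permuting the vertices of the triangle (Remark \ref{REM:CoseScemeSuiTriangoli}), which carries $\cF$ to another family of triangles and interchanges $\psi_2$ with $\psi_3$, I may assume that $\psi_2$ is diagonalizable with $\dim\ker\psi_2=1$. Fix an eigenbasis $\{u,v,w\}$ of $V_2=\Ann_{A^1}(x^2,y^2,z^2)$ with $\langle u\rangle=\ker\psi_2$; by Lemma \ref{LEM:squareindep} and the splitting $A^1=V_1\oplus V_2$, the set $\{x,y,z,u,v,w\}$ is a basis of $A^1$ with $x^3,y^3,z^3\neq 0$. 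This is precisely Case (II) of Lemma \ref{LEM:diagonalizable}: writing the eigenvalues of $\psi_2$ as $0,B,C$ with $B,C\neq 0$, the structure-constant relations obtained there together with Lemma \ref{LEM:Ann_Triang} kill all but a few coefficients of the tangent vectors, and the subcase $B+C\neq 0$ is disposed of exactly as in Lemma \ref{LEM:diagonalizable} (it forces either $\dim\ker\psi_3=2$, hence Case (I) of that lemma, or $Y_3$ a point, against Lemma \ref{LEM:bound}). Hence $C=-B$ and
\[
T_{\cF,T}=\bigl\langle\,(u,0,Du),\ (v,Bv,Iw),\ (w,-Bw,Mv)\,\bigr\rangle,\qquad B\neq 0.
\]
Since $\psi_3$ then sends $u\mapsto Du$, $v\mapsto Iw$, $w\mapsto Mv$, ruling out its non-diagonalizable Jordan types by Lemmas \ref{LEM:oneJordan} and \ref{LEM:twoJordans}, and the possibilities $\dim\ker\psi_3\geq 2$ by Case (I) of Lemma \ref{LEM:diagonalizable} (via the vertex symmetry) or by Lemma \ref{LEM:bound}, we are left with $IM\neq 0$.

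Next I would read off the quadratic relations in $A^2$. Feeding the three tangent vectors into Lemma \ref{LEM:TGVec_Triang_First_Order} gives
\[
yu=0,\qquad zu=-Dxu,\qquad yv=-Bxv,\qquad zv=-Ixw,\qquad yw=Bxw,\qquad zw=-Mxv,
\]
while $xy=yz=zx=0$ and $x^2u=x^2v=x^2w=0$ (and likewise for $y^2$ and $z^2$) hold by construction. I would then deform each of these identities along the three tangent directions — the device already used in the threefold case and in Lemmas \ref{LEM:oneJordan}--\ref{LEM:diagonalizable} — so as to determine the products $u^2,v^2,w^2,uv,uw,vw$. This pins down the multiplication $A^1\times A^1\to A^2$, hence the apolar algebra $A_f$ completely, in terms of $B,D,I,M$ and finitely many surviving nonzero structure constants; via the perfect pairing $A^1\times A^2\to A^3\cong\bK$ this in turn determines $f$, up to projective equivalence, as an explicit polynomial depending on those parameters.

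Finally, for each $f$ obtained in this way I would compute $H_f$, $h_f=\det H_f$ and $\Sing(\cH_f)=\cD_{n-1}(f)$ directly. In the spirit of the Torelli-type philosophy recalled in the introduction, the outcome should be that $\dim\Sing(\cH_f)=2=n-3$ unless $f$ is of Thom--Sebastiani type, the latter occurring only on a proper subfamily. In the first case this contradicts the standing assumption $\dim\Sing(\cH_f)=n-2=3$ of Framework \ref{strategy}; in the second it contradicts $[f]\in\cV$. Either way $\psi_2$ — and by the vertex symmetry $\psi_3$ — cannot be diagonalizable with one-dimensional kernel. Together with Lemmas \ref{LEM:oneJordan}, \ref{LEM:twoJordans} and \ref{LEM:diagonalizable}, this exhausts all possibilities for the pair $(\psi_2,\psi_3)$, so no $3$-dimensional family of triangles for $\cH_f$ can exist when $[f]\in\cV$, which completes the proof of Theorem \ref{THM:star} for cubic fourfolds, and hence of Theorem A.

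The step I expect to be the main obstacle is the second one. The deformations must be carried out only along genuine tangent vectors of $\cF$ at a general $T$, and one has to keep careful track of which quadratic relations persist while making sure that no relation ever forces a square in $A^1$ to vanish, which would produce a singular point of $X$. Extracting exactly enough relations to reconstruct $A_f$ — and verifying that the reconstruction is complete — is the delicate part; the concluding determinantal computation, although explicit and somewhat lengthy, is then a routine verification.
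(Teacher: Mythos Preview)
Your plan is correct and follows essentially the same route as the paper: reduce to $C=-B$ with $IM\neq 0$, extract the six first-order relations \eqref{EQ:fourfold_diag_cattivo}, deform them systematically to force enough monomials in $\Ann_\cD(f)$ so that $f$ lies in an explicit finite-dimensional linear system, and then finish by a direct determinantal computation showing $\dim\Sing(\cH_f)=2$ near $[x]$. Two small caveats worth flagging: the relations do not pin down $A_f$ uniquely but only confine $f$ to a multi-parameter family (so the final check must be done uniformly in those parameters), and along the way one also needs the non-obvious fact $D\neq 0$, which the paper extracts from a further deformation argument before the explicit Hessian computation.
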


\begin{proof}
We refer to the notations introduced at the beginning of Lemma \ref{LEM:diagonalizable}. W.l.o.g, we can set $A=0$ so that $B,C\neq 0$ by hypothesis. Then, one has $E=F=G=H=L=N=0$. First of all, notice that $B+C=0$. Indeed, otherwise, we would get $I=M=0$ so $v$ and $w$ would be two eigenvectors for $\psi_3$ with associated eigenvalue $0$. This is impossible as observed in Case (I).

Then, the tangent vectors to $\cF$ at $T$ can be written as
\begin{equation*}
\un{v}=(u,0,Du),\quad \un{v}'=(v,Bv,Iw),\quad  \un{v}''=(w,-Bw,Mv)
\end{equation*}
with $B\neq 0$ and $(D,I,M)\neq (0,0,0)$ by Lemma \ref{LEM:bound}. Moreover, notice that $I,M\neq 0$ since, otherwise, we would have that $\psi_3$ is not diagonalizable (and this can not happen for $T$ general by Lemma \ref{LEM:twoJordans}).
\smallskip

The first order conditions obtained as a consequence of Lemma \ref{LEM:TGVec_Triang_First_Order} are

\begin{equation}
\label{EQ:fourfold_diag_cattivo}
\begin{array}{ll}
(\un{v})_{xy}: yu=0 \qquad & (\un{v})_{xz}: (Dx+z)u=0 \\
(\un{v}')_{xy}: (Bx+y)v=0 \qquad & (\un{v}')_{xz}: Ixw+zv=0 \\
(\un{v}'')_{xy}: (-Bx+y)w=0\qquad & (\un{v}'')_{xz}: Mxv+zw=0
\end{array}
\end{equation}

Consider the following subsets of $\cD=\bK[x,y,z,u,v,w]$
\begin{small}
$$
\cM^{nv}=\{x^3,y^3,z^3,xu^2,xv^2,xw^2,yv^2,yw^2,zvw,uv^2\}
$$
$$M_0=\{xy,xz,yz\}\cup \left(\{x^2,y^2,z^2\}\cdot \{u,v,w\}\right)\qquad M_1=\{yu,xuv,xuw,zuv,zuw\}$$
$$M_2=\{u^2v,u^2w\}\qquad M_3=\{xvw,yvw,zv^2,zw^2\}\qquad  M_4=\{v^3,w^3\}\qquad M_5=\{v^2w,vw^2\}$$
\end{small}
and the elements
\begin{small}
$$r_1=x(Iw^2-Mv^2)\qquad r_2=u(Mv^2-Iw^2)\quad \mbox{ and }\quad r_3=u(Dw^2+Mvw).$$
\end{small}
Notice that all the monomials in $M_0$ are $0$ in $A_f=\cD/\Ann_\cD(f)$ by the conditions imposed by our framework. We want to prove that the same holds for all the elements in $M_i$ for $i\in \{1,\dots, 5\}$ and for $r_1,r_2$ and $r_3$, whereas all the monomials in $\cM^{nv}$ are {\it not } $0$ (in $A_f$).

If $T$ is deformed in the direction of $t\un{v}+s\un{v}'+r\un{v}''$ the corresponding first order deformation of $u$ is written as $u+tu'+su''+ru'''$ (and analogously the ones for $v,w, B,D, I$ and $M$).
\smallskip


{\bf Claim:} All the monomials in $M_1$ and in $M_2$ are $0$. \\
One has $yu=0$ from Equation $(\un{v})_{xy}$. Upon multiplying by $u$ the other equations in \eqref{EQ:fourfold_diag_cattivo}, one gets the vanishing for the monomials in $M_1$. By deforming Equation $(\un{v})_{xy}$ in the direction of $s\un{v}'+r\un{v}''$, we get
$$yu''+Buv=yu'''-Buw=0.$$
Multiplying by $u$ these relations and by using the vanishing $yu=0$, one gets the claim.
\smallskip


{\bf Claim:} All the monomials in $M_3$ are $0$.\\
Observe that it is enough to show hat $xvw=0$: all the other vanishings come from Equations \eqref{EQ:fourfold_diag_cattivo} after multiplication by $v$ or $w$ and vanishing in $M_1$ or $M_2$.\\
Let us deform Equation $(\un{v}')_{xy}$ in the direction of $r\un{v}''$: 
\begin{equation}
\label{EQ:fourfolds_(3)r}
B'''xv+Bxv'''+yv'''=0.
\end{equation}
Multiplying by $x$ we get the relation $x^2v'''=0.$ Recalling that $V_2=\langle u,v,w\rangle=\Ann_{A_1}(x^2,y^2,z^2)$, let us consider the vanishing $x^2v=0$: its deformation in the direction of $r\un{v}''$ yields $x^2v'''+2xvw=0$. Since, as just shown, $x^2v'''=0$, we get $xvw=0$, as claimed.
\smallskip


{\bf Claim:} All the monomials in $\cM^{nv}$ besides $uv^2$ are not $0$ and $B''=B'''=0$.\\
We have $x^3,y^3,z^3\neq 0$ by assumption, since the general triangle of $\cF$ can not have a vertex on the cubic fourfold. As a consequence of the framework and since the monomials in $M_1$ and $M_3$ are $0$, we get that $\langle x,y,z,u,w\rangle\subseteq\ker(xv\cdot: A^1\rightarrow A^3)$. Hence, $xv^2$ can not be zero, otherwise, by Gorenstein duality, also $xv$ would be $0$, which is not possible as observed in Lemma \ref{LEM:Ann_Triang}. In the same way, one gets that also $xu^2$ and $xw^2$ are not $0$. For the remaining monomials, they have to be different from $0$, otherwise one would get a contradiction multiplying the equations in \eqref{EQ:fourfold_diag_cattivo} by $u, v$ or $w$.
\smallskip
For the second claim, observe that multiplying by $v$ Equation \eqref{EQ:fourfolds_(3)r}, since $(Bx+y)v=0$ by Equation $(\un{v}')_{xy}$, one gets $B'''xv^2=0$. Being $xv^2\neq0$, as just shown, we have also that $B'''=0$, as claimed. In order to show that $B''=0$, one proceeds in an analogous way by deforming Equation $(\un{v}'')_{xy}$ in the direction of $s\un{v}'$: 
\begin{equation}
\label{EQ:fourfolds_(5)s}
-B''xw-Bxw''+yw''=0.
\end{equation}
One gets the claim by multiplying by $w$.
\smallskip


{\bf Claim:} The monomials in $M_4$ are $0$.\\
Let us consider the first order deformation of $(\un{v}')_{xy}$ and $(\un{v}'')_{xy}$ in the direction of $s\un{v}'$ and $r\un{v}''$ respectively:
\begin{equation}
\label{EQ:fourfolds_(3)s+(5)r}
B''xv+Bxv''+2Bv^2+yv''=0\qquad -B'''xw-Bxw'''-2Bw^2+yw'''=0.
\end{equation}
Since $B''=B'''=0$ as shown in the previous claim, if one multiplies the above Equations \eqref{EQ:fourfolds_(3)s+(5)r} by $v$ and $w$ respectively, one gets the claim.
\smallskip


{\bf Claim:} One has $w''=v'''=0$ as tangent vectors.\\
Let us start by proving that $w''=0$. Since, by construction, we have that $w\in V_2$, one can deform in the direction of $s\un{v}'$ the relations $x^2w=y^2w=z^2w=0$. Recalling that $xvw=yvw=zw^2=0$ by the previous Claims, one obtains that $w''\in V_2$, so we can write $w''=\alpha u+\beta v+\gamma w$.
By substituting this expression in Equation \eqref{EQ:fourfolds_(5)s} one has
$$-\alpha Bxu+\beta(-Bx+y)v=0,$$
which, if multiplied by $u$, gives $\alpha B xu^2=0$. Since $Bxu^2\neq 0$, one has $\alpha=0$. 
\smallskip

Being $\alpha=0$, it follows $\beta(-Bx+y)v=0$ from the above equation. On the other hand, from one has $(Bx+y)v=0$ (see Equation $(\un{v}'_{xy})$) so $\beta=0$. Indeed, otherwise we would get $xv=0$, which is not possible by Lemma \ref{LEM:Ann_Triang}. This means that $w''=0$ in $A^1/{\langle w\rangle}$.
\smallskip

In order to get $v'''=0$, one proceeds in a similar way: first of all one proves $v'''\in V_2$ starting from $v\in V_2$ and by using previous vanishings. Then, by substituting in Equation \eqref{EQ:fourfolds_(3)r} and by using Equation $(\un{v}')_{xy}$, one concludes as above.
\smallskip


{\bf Claim:} The monomials in $M_5$ are $0$.\\
We have shown that $zv^2=0$ and $xvw=0$ for the general triangle $T$, so we can deform these equations in the direction of $s\un{v}'$. By using Equation $(\un{v})_{xz}$ and $w''=0$, one can write these relations as
$$Iv^2w+2vzv''=I(v^2w-2xwv'')=0\qquad v^2w+xwv''=0.$$
As observed above, $I$ is not $0$, thus we deduce $v^2w=0$.
\smallskip

For the vanishing $vw^2=0$, one works in a similar way by deforming $zw^2=0$ and $xvw=0$ in the direction of $r\un{v}''$, and by using Equation $(\un{v}'')_{xz}, v'''=0$ and $M\neq 0$.


{\bf Claim:} $v'\in \langle v,w\rangle$ and $u'$ does not depend on $y$.\\
Proceeding as we have done above for proving $v''',w''\in V_2$, one can also obtain that $v'\in V_2=\langle u,v,w\rangle$. Consider the first order deformation of the Equation $(\un{v}')_{xy}$ in the direction of $t\un{v}$, namely
$$B'xv+Bxv'+Buv+yv'=0.$$
Upon multiplication by $u$, using $B\neq 0$ and the various vanishing shown above, one gets $xuv'=0$. Since $xu^2\neq 0$ and $xu\cdot \langle x,y,z,v,w\rangle=0$, one has that $v'\in \langle v,w\rangle$.
\smallskip

For the second claim, by deforming $(\un{v})_{xy}$ in the direction of $t\un{v}$, one gets $yu'=0$ so $y^2u'=0$ and this implies that $u'$ does not depend on $y$, since $y^3\neq 0$ and $y^2\cdot\langle x,z,u,v,w\rangle=0$.
\smallskip


{\bf Claim:} $uv^2\neq 0$ in $A_f$.\\
Assume, by contradiction, that $uv^2=0$. We claim that $v'=0$ as tangent vector. Consider the first order deformation of the Equation $zv^2=0$ in the direction of $t\un{v}$, i.e.
$$0=Duv^2+2zvv'=2zvv'$$
Since $v'\in \langle v,w\rangle$ (by the previous claim), $zv^2=0$ and $zvw\neq 0$, one has that $v'=0$ as tangent vector.
\smallskip

Since we are assuming that $uv^2=0$ for the general triangle in $\cF$, we can deform this equation in the direction of $t\un{v}$. This operation yields the relation $0=v^2u'+2uvv'=v^2u'$.
Now recall that $u'$ does not depend on $y$, $v^2\cdot \langle z,v,w\rangle$ by previous vanishings and $uv^2=0$, by assumption. Since $xv^2\neq 0$, from $v^2u'=0$ one has that $u'$ does not depend on $x$.
\smallskip

This yield a contradiction by deforming $x^2u=0$ in the direction of $t\un{v}$. Indeed, one has
$0=x^2u'+2xu^2=2xu^2$ but $xu^2\neq 0$.
\smallskip


{\bf Claim:} Elements $r_1, r_2$ and $r_3$ are $0$.\\
The relation $r_1=x(Iw^2-Mv^2)=0$ is easily obtained from Equations $(\un{v}')_{xz}$ and $(\un{v}'')_{xz}$ upon multiplication by $w$ and $v$ respectively.

We prove now that $r_2=u(Mv^2-Iw^2)=0$. Consider the first order deformation in the direction of $s\un{v}'$ of the Equations $zuw=0$ and $xuv=0$, together with Equation $\eqref{EQ:fourfolds_(3)s+(5)r}_I$ multiplied by $u$, namely
\begin{equation}
\label{EQ:fourfold_RelationsPartial}
Iuw^2+zuw''+zwu''=0\quad uv^2+xuv''+xvu''=0\quad Bxuv''+2Buv^2+yuv''+B''xuv=0.
\end{equation}
Now, since $w''=zw+Mxv=0$, and $B''=yu=0\neq B$ we have
$$Iuw^2-Mxvu''=0\qquad xuv''+2uv^2=0$$
which give the desired relation, if substituted into Equation $\eqref{EQ:fourfold_RelationsPartial}_{II}$.
\smallskip

The other relation, namely $r_3=u(Dw^2+Mvw)=0$, is obtained in a similar way from the first order deformation in the direction of $r\un{v}''$ of the Equations $(\un{v})_{xy}$ and $(\un{v})_{xz}$ upon multiplication by suitable elements (more precisely, the first one by $w$ and $Mv$ and the second one by $w$, respectively).
\smallskip

To sum up, we have proved that if we define
$$
\cR=\{r_1,r_2,r_3\}\cup \left(\bigcup_{i=0}^5 M_i\right)\cup\{\mbox{LHS of relations in }\eqref{EQ:fourfold_diag_cattivo}\}
$$
then 
\begin{equation}
\label{EQ:fourfold_cond_lin_system}
\cR \subseteq \Ann_{\cD}(f) \qquad \mbox{ and }\qquad  \cM^{nv}\cap \Ann_{\cD}(f)=\emptyset.
\end{equation}

Now we would like to partially reconstruct the cubic fourfold $f$ from the information about its apolar ring $A_f$ obtained so far. For simplicity, we are using the same symbols for the indeterminates in $S=\bK[x_0,\dots,x_5]$ and in $\cD=\bK[y_0,\dots, y_5]=\bK[x,y,z,u,v,w]$. Consider the following cubics in $S^3$: 
$$s_0=x^3\quad s_1=y^3\quad s_2=z^3\quad s_3=(x-Dz)u^2 \quad s_6=u^3$$ 
$$s_4=x(Iv^2+Mw^2)+yB(Mw^2-Iv^2)-2IMzvw\quad s_5=-2Duvw+u(Iv^2+Mw^2).$$

It is easy to see that 
$$W=\langle s_i\rangle_{i=0}^5=\{f \in S^3\,|\, \cR\subseteq \Ann_D(f)\}.$$ 
This can be checked directly by hand by writing $f=\sum\alpha_m\cdot m$ where $m$ runs over the set of monomials of degree $3$ in $S$. Each element in $\cR$ is a linear differential equation satisfied by $f$ and thus gives a linear closed condition on the vector space $S^3$. For example, since $r_2=u(Mv^2-Iw^2)\in \cR$, we have the corresponding condition  $2M\alpha_{uv^2}-2I\alpha_{uw^2}=0$ on the coefficients of $f$.
\smallskip

Hence, {\it any} cubic polynomial that we are analysing in this case, can be written as $f=\sum_{i=0}^{6}p_i s_i$ for suitable $p_i\in \bK$.
Having proved that $\cM^{nv}\cap \Ann_{\cD}(f)=\emptyset$ gives non-trivial open conditions: indeed, it is translated into 
\begin{equation}
\label{EQ:fourfold_nonvanishing}
p_0,p_1,p_2,p_3,p_4,p_5\neq 0
\end{equation}
so all the cubic fourfolds satisfying Conditions \eqref{EQ:fourfold_cond_lin_system} live in a dense open subset of $|W|$. Notice that the base locus of $|W|$ is the line $L=V(x,y,z,u)$. Moreover, as $p_4,B,I,M\neq 0$ and since
$$y_0(f)|_{L}=p_4(Iv^2+Mw^2)\qquad  y_1(f)|_{L}=p_4B(Mw^2-Iv^2),$$ 
we have that the general cubic in $|W|$ is indeed smooth on the points of $L$ and thus smooth everywhere by Bertini.
\smallskip


{\bf Claim:} One has $D\neq 0$.\\
Consider the first order deformation of $v^2w$ and of $z^2v$ in the direction of $s\un{v}'$, namely
\begin{equation}
\label{EQ:fourfold_Dnon0}
2vw v''+w''v^2=2vw v''=0\qquad z^2v''+2Izvw=0,
\end{equation}
where we also used $w''=0$.
Assume, by contradiction, that $D=0$. Then, from the expression of $f$ and as $p_5\neq 0$, we have that $uvw=0$ in $A_f$. Hence $vw\cdot \langle x,y,u,v,w\rangle=0$. Then, by previous vanishings and since $zvw\neq 0$, from Equation $\eqref{EQ:fourfold_Dnon0}_I$, we get that $v''$ does not depend on $z$. This implies that $z^2v''=0$ so Equation $\eqref{EQ:fourfold_Dnon0}_{II}$ yields $2Izvw=0$, and thus $I=0$, which is impossible.
\smallskip


{\bf Claim:} If $f$ satisfies the Conditions in \eqref{EQ:fourfold_cond_lin_system}, then $\Sing(\cH_f)$ is of dimension $2$ near $[x]$.\\

By changing coordinates, we can simplify a little the expression of $f$. Indeed, as $B$, $D$, $I$, $M$, $p_4$ and $p_5$ are not $0$, by an easy change of coordinates, and by redefining the $p_i$s one can write

\begin{multline}
2f=p_0 \left(x^3\right)+p_1 \left(y^3\right)+p_2 \left(z^3\right)+p_3 \left((x-z)u^2\right)+p_6 \left(u^3\right)+ \\
+ (x+u)(w^2+v^2)+y(w^2-v^2)-2\left(\lambda z+\lambda^{-1}u\right)vw
\end{multline}
with $\lambda,p_0,p_1,p_2,p_3\neq 0$.
\smallskip

By construction, $[x]\in \bP(A^1)\leftrightsquigarrow (1:0:0:0:0:0)\in \bP^n$ is a vertex of a triangle for $\cH_f$ so $[x]\in \Sing(\cH_f)$. We are assuming also that there exists a family of dimension $5-2=3$ whose general element is a triangle dominating via the first projection a component of dimension $3$ of $\Sing(\cH_f)$. Then, in order to conclude the proof of the lemma, it is enough to show that the local dimension of $\Sing(\cH_f)$ near $[x]$ is actually $2$. 
\smallskip

The Hessian matrix of $f$ is
\begin{equation}
\label{EQ:fourfold_HessianMAt}
H_f=\begin{bmatrix}
3 p_0 x & 0 & 0 & p_3 u & v & w \\
0 & 3 p_1 y & 0 & 0 & - v & w \\
0 & 0 & 3 p_2 z & -p_3 u & -\lambda w & -\lambda v\\
p_3 u & 0 & -p_3 u & p_3 x - p_3 z + 3 p_6 u & v - \lambda^{-1} w & -\lambda^{-1} v + w \\
v & -v & -\lambda w & v-\lambda^{-1} w & x - y + u & -\lambda z - \lambda^{-1} u\\
w & w & -\lambda v & -\lambda^{-1} v + w & -\lambda z - \lambda^{-1} u & x + y + u
\end{bmatrix}
\end{equation}

Since $f$ is smooth, we have that $\Sing(\cH_f)=\cD_4(f)$. In particular, $\Sing(\cH_f)$ is cut out by $21$ quintic equations corresponding to the minors of order $5$ of the Hessian matrix (there are $36$ minors but $15$ appear twice since $H_f$ is symmetric). Let $m_{ij}$ be the minor obtained by removing the $i$-th row and the $j$-th column. 
We are interested in the local expression of $\Sing(\cH_f)$ near $[x]$. Notice that the polynomial $(y_iy_j)(f)$ depends on $x$ if and only if $(i,j)\in \{(0,0),(3,3),(4,4),(5,5)\}$ so no term of $h_f$ can have as exponent of $x$ an integer greater than $4$: this is a confirmation of the fact that $[x]\in \Sing(\cH_f)$. By differentiating $m_{ij}$ it is easy to see that $[x]$ is singular for $V(m_{ij})$ if $(i,j)\not\in \{(1,1),(1,2),(2,1),(2,2)\}$. 
Consider the variety $Z=V(m_{11},m_{12},m_{22})$ and notice that $\Sing(\cH_f)\subseteq Z$ by construction.
Being defined by $3$ equations, one has that $\dim(Z)\geq 2$. We claim that $Z$ has dimension $2$ near $[x]$; to do that we will show that $(1:0:0:0:0:0)$ is isolated in $Z\cap V(u,v)=V(m_{11},m_{12},m_{22},u,v)$.
\smallskip

We compute now the local expression of $m_{11},m_{12}$ and $m_{22}$ modulo $(u,v)$ in the local ring $A_m$ where $A=\bK[y,z,u,v,w]$ and $m$ is the maximal ideal of the origin in $\bA^5$. By the explicit expression of $H_f$ in Equation \eqref{EQ:fourfold_HessianMAt}, one can easily see that
\begin{equation}
\label{EQ:fourfold_m12}
m_{12}(1,y,z,0,0,w)=-3p_0\cdot w^2\cdot(w^2-p_3\lambda^2 z(1-z))=0
\end{equation}
so one between $w$ and $w^2-p_3\lambda^2 z(1-z)$ is zero. 
\smallskip

Assume first that $w=0$.
By a direct computation, one can see that
$$m_{11}(1,y,z,0,0,0)=9p_0p_2p_3\cdot z(z-1)(y^2+\lambda^2 z^2-1)\sim z$$
$$m_{22}(1,y,z,0,0,0)=9p_0p_1p_3\cdot y(z-1)(y^2+\lambda^2 z^2-1)\sim y$$
since $p_0,p_1,p_3\neq 0$ by assumption and since both $z-1$ and $y^2+\lambda^2 z^2-1$ are invertible in $A_m$. This shows that $[x]$ is isolated in $Z\cap V(u,v,w)$.
\smallskip

Assume now that $w^2=p_3\lambda^2 z(1-z)$. One can show that $w$ appears only with even powers in $m_{ij}(1,y,z,0,0,w)$ for $i,j\in\{1,2\}$ so one can substitute $p_3\lambda^2 z(1-z)$ to $w^2$ in order to obtain the two expressions

$$
r_{11}=p_3\cdot z(z-1)\cdot
    \left(3p_2 y + (3p_2 - p_3\lambda^4)(z-1)\right)\cdot \left(3p_0 y + (p_3\lambda^2)z^2-\lambda^2(3p_0+p_3)z+3p_0\right)
$$
$$
r_{22}=9p_0p_1p_3\cdot(z-1)\cdot (y+z-1)\cdot
    \left(y^2 + \frac{p_3\lambda^2}{3p_0} yz^2 - \lambda^2\frac{3p_0 + p_3}{3p_0} yz + y +
    \frac{p_3\lambda^2}{3p_1} z(z-1)\right).
$$
In $A_m$, one has
$$
r_{11}\sim z\cdot \left(3p_2 y + (3p_2 - p_3\lambda^4)(z-1)\right)\qquad r_{22}\sim
    y\cdot g(y,z) +
    \frac{p_3\lambda^2}{3p_1} z(z-1)
$$
with $g(0,0)\neq 0$. Since $w^2=p_3\lambda^2 z(1-z)$, if we assume $z=0$ we also have that $w=0$ so we can conclude by the previous case. We can then suppose that $3p_2 y + (3p_2 - p_3\lambda^4)(z-1)=0$ in the local ring. This can happen if and only if $3p_2=\lambda^4p_3$ and $y=0$. On the other hand, if $y=0$, from the expression of $r_{11}$ one has that $z(z-1)=0$ and thus again $w^2=0$. This shows that $[x]$ is isolated in $Z\cap V(u,v)$ too and thus that the local dimension of $\cD_4(f)=\Sing(\cH_f)$ near $[x]$ is $2$.   
\end{proof}

With this lemma, we also conclude the proof of Theorem \ref{THM:star} in the case of cubic fourfolds.


\bibliographystyle{amsalpha}

\end{document}